\newtheorem{theorem}{\textbf{Theorem}}[section]
\newtheorem{lemma}{\textbf{Lemma}}[section]
\newtheorem{proposition}{\textbf{Proposition}}[section]
\newtheorem{corollary}{\textbf{Corollary}}[section]
\newtheorem{remark}{\textbf{Remark}}[section]
\newtheorem{definition}{\textbf{Definition}}[section]
\def\be{\begin{equation}}
\def\ee{\end{equation}}
\def\bea{\begin{eqnarray}}
\def\eea{\end{eqnarray}}
\def\bt{\begin{theorem}}
\def\et{\end{theorem}}
\def\bl{\begin{lemma}}
\def\el{\end{lemma}}
\def\br{\begin{remark}}
\def\er{\end{remark}}
\def\bp{\begin{proposition}}
\def\ep{\end{proposition}}
\def\bc{\begin{corollary}}
\def\ec{\end{corollary}}
\def\bd{\begin{definition}}
\def\ed{\end{definition}}
 \def\non{\nonumber }
\begin{document}

\title{Strong solutions, global regularity and stability of a hydrodynamic system modeling vesicle and fluid interactions}
\author{
{\sc Hao Wu} \footnote{School of Mathematical Sciences and Shanghai
Key Laboratory for Contemporary Applied Mathematics, Fudan
University, 200433 Shanghai, China, Email:
\textit{haowufd@yahoo.com}.}\ \  and {\sc Xiang Xu}
\footnote{Department of Mathematical Sciences, Carnegie Mellon
University, Pittsburgh, PA 15213, Email:
\textit{xuxiang@math.cmu.edu}.}}
\date{\today}
\maketitle

\begin{abstract}
In this paper, we study a hydrodynamic system modeling the
deformation of vesicle membranes in incompressible viscous fluids. The system consists of the 
Navier--Stokes equations coupled with a fourth order phase-field equation. 
In the three dimensional case, we prove the existence/uniqueness of
local strong solutions for arbitrary initial data as well as global
strong solutions under the large viscosity assumption. We also
establish some regularity criteria in terms of the velocity for
local smooth solutions. Finally, we investigate the stability of the
system near local minimizers of the elastic bending energy.

{\bf Keywords.} phase-field equation, Naver--Stokes equations, well-posedness, regularity
criteria, stability.

\textbf{AMS Subject Classification}: 35B35, 35K30, 35Q35, 76D05.

\end{abstract}

\section{Introduction}\setcounter{equation}{0}
Biological vesicle membranes are interesting subjects widely studied
in biology, biophysics and bioengineering. They are not only
essential to the function of cells  but exhibit rich physical and
rheological properties as well \cite{MO05}. The single component
vesicles are possibly the simplest models for the biological cells
and molecules, which are formed by certain amphi-philic molecules
assembled in water to build bi-layers \cite{DLW04}.
   The equilibrium configurations of vesicle membranes can be characterized by the Helfrich bending elasticity energy of the surface \cite{Boal, Ci00,HE} such that they are minimizers of the bending energy under possible constraints like prescribed surface
area and bulk volume that account for the effects of density change
and osmotic pressure \cite{DLW04,Wang08}. Let $\Gamma$
 be a smooth, compact surface without boundary that represent the membrane of the vesicle.
  In the isotropic case, if the evolution
of the vesicle membrane does not change its topology, the
interfacial energy takes the following simplified form \cite{Ci00}:
 \be
E_{\mbox{elastic}}=\int_{\Gamma}\frac{k}{2}(H-H_0)^2\,ds, \non
  \ee
where $H$ is the mean curvature of the membrane surface; $k$ is
called the bending rigidity, which can depend on the local
heterogeneous concentration of the species; $H_0$ is the spontaneous
curvature that describes certain physical/chemical difference
between the inside and the outside of the membrane.

Recently, phase-field models have been derived within a general
energetic variational framework to study vesicle deformations and
numerical simulations of the membrane deformations were carried out
(see e.g., \cite{DLW04,DLRW09,DLW05,DLRW05a,DLW06,DW07,WD08,Wang08}
and references cited therein). As in \cite{DLRW05, DLW04}, we denote
by $\phi$ the phase function defined on the physical domain
$\Omega$, which is used to label the inside and the outside of the
vesicle $\Gamma$ such that $\phi$ takes the value $1$ inside of the
vesicle membrane and $-1$ outside. The sharp transition layer of the
phase function gives a diffusive interface description of the
vesicle membrane $\Gamma$, which is recovered by the level set $\{x:
\phi(x)=0\}$. The phase field approach describes geometric
deformations in Eulerian coordinates and it provides a convenient
way to capture topological transitions such as vessel fission or
fusion via changes in the level set topology.  This simplifies
numerical approximations because it suffices to consider a fixed
computational grid rather than tracking the position of the
interface \cite{DLRW09}.

For the sake of simplicity, we assume that $k$ is a positive
constant and $H_0=0$. The phase-field approximation of the Helfrich
bending elasticity energy is given by a modified Willmore energy
\cite{DLW04,Wang08} (see, e.g., \cite{DLRW05a} the approximation
energy for the elastic bending energy with non-zero spontaneous
curvature)
 \be\label{elastic bending energy}
 E_\varepsilon(\phi)=\dfrac{k}{2\varepsilon}\int_{\Omega}
 |f(\phi)|^2dx,\ \ \text{with} \ f(\phi)= -\varepsilon \Delta \phi+ \frac{1}{\varepsilon}(\phi^2-1)\phi,
 \ee
 where $\varepsilon$ is a small positive parameter (compared to the vesicle size) that characterizes the transition layer of the phase function.
 The convergence of the phase-field
model to the original sharp interface model as the transition width
of the diffuse interface $\varepsilon \rightarrow 0$ were carried
out in \cite{DLRW05,Wang08}. Two constraints are widely used in the
biophysical studies of vesicles \cite{Sei} such that the total
surface area and the volume of the vesicle are conserved (in time).
The former is a consequence of the incompressibility of the
membrane, while the latter is based on the consideration that, for a
fluctuating vesicle with the inside pressure and outside pressure
balanced by the osmotic pressure, the change in volume is normally a
much slower process in comparison with the shape change
\cite{DLRW09}. These two constraint functionals for the vesicle
volume and surface area are given by (cf. \cite{DLW04})
 \be\label{modified energy terms} A(\phi)=\int_{\Omega}\phi dx, \ \ \
B(\phi)=\int_{\Omega}\frac{\varepsilon}{2}|\nabla\phi|^2+\frac{1}{4\varepsilon}(\phi^2-1)^2dx,
\ee respectively. Two penalty terms are introduced in order to
enforce these constraints, and the approximate elastic bending
energy is formulated in the following form \cite{DLRW05,
DLRW05a,DLW05, DLW06}:
 \be\label{modified energy}
 E(\phi)=E_\varepsilon(\phi)+\frac12M_1(A(\phi)-\alpha)^2+\frac12M_2(B(\phi)-\beta)^2,
 \ee
where $M_1$ and $M_2$ are two penalty constants, $\alpha=A(\phi_0)$
and $\beta=B(\phi_0)$ are determined by the initial value of the
phase function $\phi_0$. Alternatively, Lagrange multipliers should be
introduced to preserve the vesicle volume and total vesicle surface area
\cite{DLRW09,DLW06}.

In this paper, we consider a hydrodynamic system for
 the interaction of a vesicle with the fluid field, which describes the evolution of vesicles immersed in an incompressible, Newtonian fluid \cite{DLL07}.
 More precisely, we study the following phase-field Navier--Stokes equations for the velocity field $u$ of the fluid and the phase function $\phi$:
\bea
  &&u_t+u\cdot\nabla u+\nabla P=\mu\Delta u+\dfrac{\delta
  E(\phi)}{\delta\phi}\nabla\phi,
  \label{NS}\\
  &&\nabla\cdot u=0, \label{incompressibility}\\
  &&\phi_t+u\cdot\nabla\phi=-\gamma\dfrac{\delta
  E(\phi)}{\delta\phi}. \label{phasefield}
\eea

System \eqref{NS}--\eqref{phasefield} can be obtained via an
energetic variation approach \cite{HKL,YFLS} (see \cite{DLRW09} for the
derivation of a corresponding evolution system that adopts the
Lagrange multiplier approach for the volume and surface area
constraints). The resulting membrane configuration and the flow
field reflect the competition and the coupling of the kinetic energy
and membrane elastic energies. Equations \eqref{NS} and
\eqref{incompressibility} are the Navier--Stokes equations of the
viscous incompressible fluid with unit density and a force term, which is
derived from the variation of the elastic bending energy and it
involves a nonlinear combination of higher-order spatial derivatives
of the phase function. The scalar function $P$ denotes the pressure.  We denote $\mu$ the
fluid viscosity, which is assumed to be a positive constant
throughout both fluid phases and the interface. The third equation \eqref{phasefield}
is a relaxed transport equation of $\phi$ under the velocity field
$u$. Its right-hand side contains a regularization term, where
$\gamma$ is the mobility coefficient that is assumed to be a small
positive constant.

Well-posedness of the system \eqref{NS}--\eqref{phasefield} subject
to no-slip boundary condition for the velocity field and Dirichlet
boundary conditions for the phase function has been studied in
\cite{DLL07, LYN}.  In \cite{DLL07}, the authors obtained the
existence of weak solutions by using the Galerkin method. They also
obtain the uniqueness of solutions in a more regular class than the
one used for existence. Quite recently, existence and uniqueness of
local strong solutions has been proved in \cite{LYN} by using the Banach fixed point theorem. Their result was obtained in a proper Sobolev space with fractional order for the phase function, since some compatibility conditions (on the boundary) are
required in the fixed point strategy described in \cite{LYN} in order to incorporate the estimates on the nonlinear force term.
Besides, \textit{almost} global existence of strong solutions were
obtained under the assumption that the initial data and the quantity
$(|\Omega| + \alpha)^2$ are sufficiently small (controlled by a small parameter $q=q(T)$ depending on the existence time $T$). However, they were
not able to prove global existence of strong solutions because uniform-in-time \textit{a
priori} estimates were not available in their argument. 

We note that, although Dirichlet type boundary conditions have
natural and physical meanings, the periodic boundary conditions can
also be reasonably justified from the physical point of view, when
the vesicle interface $\Gamma$ is sufficient small compared with the
overall physical domain $\Omega$ (cf. \cite{DLW06}). In this
paper, we shall consider the system \eqref{NS}--\eqref{phasefield}
 subject to the periodic boundary conditions (i.e., in torus
$\mathbb{T}^3$):
 \be u(x+e_i) = u(x), \ \
\phi(x+e_i)=\phi(x), \ \ \mbox{for} \ x \in
\partial Q,
  \label{BC}
  \ee
and to the initial conditions
 \be u|_{t=0}=u_0(x), \ \mbox{with} \ \nabla\cdot
u_0=0, \ \int_{Q}u_0dx=0 \ \mbox{and} \ \phi|_{t=0}=\phi_0(x), \ \
\mbox{for} \ x \in Q,
 \label{IC}
 \ee
 where $Q$ is a unit square in $\mathbb{R}^3$.

The main purpose of this paper is to study the existence, regularity
and stability of global strong solutions to the problem
\eqref{NS}--\eqref{IC}. In the subsequent proof, we shall see that,
due to the membrane deformation and the contribution of the
convection term to the phase-field evolution, the problem
\eqref{NS}--\eqref{IC} admits an energy dissipation mechanism (cf.
\eqref{basic energy law} below) that plays a crucial role in
controlling the contribution to the momentum equation of the extra
stress tensor. Comparing with \cite{LYN}, the advantage to work in the periodic setting is that
one can get rid of certain boundary terms when performing
integration by parts. Due to the weak coupling in the phase-field
equation \eqref{phasefield}, which is a gradient flow of the elastic
bending energy under fluid convection, we can derive
uniform-in-time estimate for $H^3$-norm of $\phi$ (cf. Proposition
\ref{comp}) that enables us to derive some specific higher-order
energy inequalities (cf. Lemma \ref{highorder1} and Lemma
\ref{highlargev}) in the spirit of \cite{LL95} for a simplified
nematic crystal system. Based on these higher-order inequalities, we
can show existence and uniqueness of local strong solutions to the
problem \eqref{NS}--\eqref{IC} (cf. Theorem \ref{locstr}), existence
of global strong solutions under properly large viscosity $\mu$ (cf.
Theorem  \ref{theorem in large viscosity case}) and also the
eventual regularity of global weak solutions (cf. Corollary
\ref{evereg}). After a careful exploration of the nonlinear coupling
between the velocity field and membrane deformation, we establish
some regularity criteria for solutions to problem
\eqref{NS}--\eqref{IC} that only involve the velocity field in three dimensional case
(cf. Theorems \ref{theorem on logarithmic criterion}, \ref{theorem
on logarithmic criterion2}), which coincide with the results for
conventional Navier--Stokes equations. This indicates that the
velocity field indeed plays a dominant role in studying regularity
for solutions $(u, \phi)$. Finally, we prove the well-posedness and
stability of global strong solutions when the initial velocity is small and the initial phase function is close to a certain
local minimizer of the elastic energy (cf. Theorem \ref{theorem on
near equilibrium case}), by using a suitable \L
ojasiewicz--Simon type inequality (cf. Lemma \ref{ls}). 

The results
obtained in this paper hold for any given (but fixed) penalty
constants. Since we are working with the penalty formulation to
incorporate the volume and surface area conservation of the vesicle
membrane, these constraints are satisfied only approximately. It would
be interesting to investigate the
evolution system in the Lagrange multiplier formulation (cf.
\cite{DLRW09}), where the volume/area constraints are
satisfied exactly. 
We refer to the recent work \cite{CL11,CL12} for well-posedness results on the single phase-field equation with volume/area constraints, but without coupling with the fluid.

The remaining part of the paper is organized as follows. In Section 2, we
present the functional settings and some preliminary results. In
Section 3, we prove the existence of local strong solutions and
global ones under the large viscosity assumption. In Section 4, we
establish some logarithmic-type regularity criteria for the smooth
solutions only in terms of the velocity field. In Section 5, we
study the well-posedness and stability of global strong solutions
near local minimizers of the elastic energy. In the final Section 6,
the appendix, we provide a formal physical derivation of the
hydrodynamic system \eqref{NS}--\eqref{phasefield} via the
energetic variational approach, and then sketch the proof of the
\L ojasiewicz--Simon type inequality that plays a key role in the
analysis of Section 5.

\section{Preliminaries}\setcounter{equation}{0}
 We recall the well established functional
settings for periodic problems (cf. \cite{Te}):
 \bea
H^m_p(Q)&=&\{u\in H^m(\mathbb{R}^n,\mathbb{R})\ |\ u(x+e_i)=u(x)\},\non\\
\mathbf{H}&=&\{v\in \mathbf{L}^2_p(Q) ,\ \nabla\cdot v=0\},\ \
\text{where}\ \mathbf{L}^2_p(Q)=\mathbf{H}^0_p(Q),
\non\\
\mathbf{V}&=&\{v\in \mathbf{H}^1_p(Q),\ \nabla\cdot v=0\},\non\\
\mathbf{V}'&=&\text{the\ dual space of\ } V.\non
 \eea
For any Banach space $B$, we denote by $\mathbf{B}$ the vector space
$(B)^r$, $r\in \mathbb{N}$, endowed with product norms. For any norm
space $X$, its subspace that consists of functions in $X$ with
zero-mean will be denoted by $\dot X$
 such that $\dot X=\left\{w\in X: \int_{Q} w \,dx=0\right\}.$ We denote the inner
product on $L^2_p(Q)$ (or $\mathbf{L}^2_p(Q))$ as well as $H$ by
$(\cdot,\cdot)$ and the associated norm by $\|\cdot\|$. The space
$H^m_p(Q)$ will be short-handed by $H^m_p$. We denote by $C$ and
$C_i, i=0, 1, \cdots$ generic constants which may depend only on
$\mu, \gamma, Q, \alpha, \beta$ and the initial data $(u_0,
\phi_0)$. Special dependence will be pointed out explicitly in the
text if necessary. Throughout the paper, the Einstein summation
convention will be used. Following \cite{Te}, one can define the
mapping $S$
 \be S u=-\Delta u, \quad  \forall\, u\in D(S):=\{u\in \mathbf{H}, \Delta u\in \mathbf{H}\}=\dot {\mathbf{H}}^2_p\cap
 \mathbf{H}.\label{stokes}
 \ee
 The Stokes operator $S$ can be viewed as an unbounded
positive linear self-adjoint operator on $H$. If $D(S)$ is endowed
with the norm induced by $\dot{\mathbf{L}}^2_p$, then $S$ becomes an
isomorphism from $D(S)$ onto $\mathbf{H}$. More detailed properties
of the operator $S$ can be found in \cite{Te}. We also recall
 the interior elliptic estimate, which states that for bounded domains $U_1\subset\subset U_2$
 there is a constant $C>0$ depending only on $U_1$ and $U_2$ such that
 $\|\phi\|_{H^2(U_1)}\leq C(\|\Delta \phi\|_{L^2(U_2)}+\|\phi\|_{L^2(U_2)})$. In our
 current case under periodic boundary conditions, we can choose $Q'$ to be the union of $Q$ and its
 neighborhood copies. Then we have
 \be
 \|\phi\|_{H^2(Q)}\leq C(\|\Delta \phi\|_{L^2(Q')}+\|\phi\|_{L^2(Q')})= 9C(\|\Delta
 \phi\|_{L^2(Q)}+\|\phi\|_{L^2(Q)}).\label{dh2}
 \ee
 It follows from the periodic boundary condition that $\int_{Q}\nabla\phi\,dx=0$ and $\int_{Q}\Delta\phi\,dx=0$, then we infer from the Poincar\'{e}--Wirtinger inequality that
 \be
  \|\nabla\phi\| + \left|\int_Q \phi dx\right| \approx \|\phi\|_{H^1}, \ \  \|\Delta \phi\|+ \left|\int_Q \phi dx\right| \approx \|\phi\|_{H^2}, \ \  \|\nabla\Delta\phi\|+ \left|\int_Q \phi dx\right| \approx \|\phi\|_{H^3}. \label{from Poincare}
 \ee
A direction calculation yields that the variation of the approximate
elastic energy is given by
 \be
\dfrac{\delta E(\phi)}{\delta\phi}= kg(\phi)+
M_1(A(\phi)-\alpha)+M_2(B(\phi)-\beta)f(\phi), \label{variation of
energy}
  \ee
  where
  \be
   g(\phi)=-\Delta f(\phi)+ \frac{1}{\varepsilon^2}(3\phi^2-1)f(\phi).\non
  \ee
Since we are now dealing with the periodic boundary conditions, the
average of velocity $u$ is conserved.
\begin{lemma}\label{constant velocity mean}
Let $(u, \phi)$ be a solution to the problem \eqref{NS}--\eqref{IC}
on $[0,T]$. It holds
 \be
 \int_Q u(t) dx=\int_Q u_0 dx, \quad \forall\, t\in [0,T]. \label{zerou}
 \ee
\end{lemma}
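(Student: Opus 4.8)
The plan is to integrate the momentum equation \eqref{NS} over the cell $Q$ and to show that, apart from the time derivative $\frac{d}{dt}\int_Q u\,dx$, every resulting term vanishes because of the periodic boundary conditions \eqref{BC} together with the incompressibility constraint \eqref{incompressibility}. Writing $u\cdot\nabla u=\nabla\cdot(u\otimes u)$ (legitimate since $\nabla\cdot u=0$), the divergence theorem on the torus gives $\int_Q u\cdot\nabla u\,dx=0$; similarly $\int_Q\nabla P\,dx=0$ and $\mu\int_Q\Delta u\,dx=0$. Hence the matter reduces to showing that the elastic force has zero spatial mean, i.e. $\frac{d}{dt}\int_Q u(t)\,dx=\int_Q \frac{\delta E(\phi)}{\delta\phi}\nabla\phi\,dx$ with $\int_Q \frac{\delta E(\phi)}{\delta\phi}\nabla\phi\,dx=0$.

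For the latter I would use \eqref{variation of energy}, which exhibits $\frac{\delta E(\phi)}{\delta\phi}$ as a sum of three contributions. Since $A(\phi)-\alpha$ and $B(\phi)-\beta$ are constants in $x$, they factor out of the corresponding integrals; then $\int_Q\nabla\phi\,dx=0$ (a consequence of periodicity, already noted before \eqref{from Poincare}) kills the $M_1$-term and reduces the $M_2$-term to a multiple of $\int_Q f(\phi)\nabla\phi\,dx$. It therefore suffices to check $\int_Q f(\phi)\nabla\phi\,dx=0$ and $\int_Q g(\phi)\nabla\phi\,dx=0$. Both follow from the fact that, componentwise, $f(\phi)\,\partial_j\phi$ and $g(\phi)\,\partial_j\phi$ are sums of spatial derivatives $\partial_i(\cdots)$ of $Q$-periodic quantities: for $f$ this is immediate from $f(\phi)\,\partial_j\phi=-\varepsilon\,\partial_i(\partial_i\phi\,\partial_j\phi)+\partial_j\big(\tfrac{\varepsilon}{2}|\nabla\phi|^2+\tfrac{1}{4\varepsilon}(\phi^2-1)^2\big)$, and for $g$ it follows by a slightly longer but elementary manipulation using $g(\phi)=-\Delta f(\phi)+\frac{1}{\varepsilon^2}(3\phi^2-1)f(\phi)$ together with $\partial_j f(\phi)=-\varepsilon\Delta\partial_j\phi+\frac{1}{\varepsilon}(3\phi^2-1)\partial_j\phi$. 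More conceptually, $\int_Q\frac{\delta E(\phi)}{\delta\phi}\,\partial_j\phi\,dx=0$ is just the infinitesimal form of the translation invariance $E(\phi(\cdot+h))\equiv E(\phi)$ on $\mathbb{T}^3$, obtained by differentiating in $h$ at $h=0$. Consequently $\frac{d}{dt}\int_Q u(t)\,dx=0$, and integrating in time from $0$ to $t$ yields \eqref{zerou}.

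The computation is elementary throughout; the only step that is not completely automatic is the vanishing of the mean of the elastic force $\frac{\delta E(\phi)}{\delta\phi}\nabla\phi$, and I expect this to be the single place requiring a little care — essentially, recognizing its divergence-plus-gradient structure (or invoking translation invariance). If one prefers to use only the regularity available for weak solutions, the same conclusion follows by testing \eqref{NS} against an arbitrary constant vector field, which is admissible as a divergence-free test function and simultaneously removes the pressure without any integrability assumption on the individual terms of the force.
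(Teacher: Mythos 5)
Your proposal is correct and follows essentially the same route as the paper: integrate \eqref{NS} over $Q$, kill the convection, pressure and viscous terms by periodicity and incompressibility, and reduce everything to $\int_Q \frac{\delta E}{\delta\phi}\nabla\phi\,dx=0$, which the paper also verifies term by term from \eqref{variation of energy} by the same divergence-plus-gradient manipulations (the $g$-term you leave as a sketch amounts, after one integration by parts, to $\frac{1}{2\varepsilon}\int_Q\nabla\big(f(\phi)^2\big)dx=0$, exactly the paper's computation of $I_1$). Your translation-invariance observation and the remark about testing with constant vector fields are nice conceptual additions but do not change the substance of the argument.
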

\begin{proof}
It follows from \eqref{variation of energy} that
 \be
\dfrac{\delta E}{\delta\phi}\nabla\phi= kg(\phi)\nabla \phi +
M_1(A(\phi)-\alpha)\nabla \phi +M_2(B(\phi)-\beta)f(\phi)\nabla \phi
:= I_1+I_2+I_3.\ee Since $A(\phi)$ and $B(\phi)$ are functions only
depending on time, using integration by parts and the periodic
boundary conditions, we deduce that
 \be
 I_2=M_1(A(\phi)-\alpha)\int_Q \nabla \phi dx=0,\non
 \ee
 \bea
 I_3&=& M_2(B(\phi)-\beta)\int_Q f(\phi)\nabla \phi dx\non\\
 &=& M_2(B(\phi)-\beta)\int_Q\Big( -\varepsilon \Delta \phi+ \frac{1}{\varepsilon}(\phi^2-1)\phi\Big)\nabla \phi dx\non\\
 &=& M_2(B(\phi)-\beta)\int_Q \nabla \Big( \frac{\varepsilon}{2}|\nabla \phi|^2+  \frac{1}{4\varepsilon}(\phi^2-1)^2\Big)dx \non\\
 && -M_2(B(\phi)-\beta)\int_Q \nabla \cdot(\nabla \phi \otimes \nabla \phi) dx\non\\
 &=&0,\non
 \eea
 where we have used the fact $\Delta \phi\nabla\phi=\nabla \cdot(\nabla \phi\otimes\nabla \phi)-\frac12\nabla(|\nabla \phi|^2)$.
Finally,
 \bea
  \frac{1}{k}I_1&=& \varepsilon \int_Q \Delta ^2 \phi \nabla \phi dx-\frac{1}{\varepsilon} \int_Q \Delta (\phi ^3-\phi) \nabla \phi dx \non\\
  && +\frac{1}{\varepsilon^2}\int_Q \Big[-\varepsilon \Delta \phi+\frac{1}{\varepsilon} (\phi^2-1)\phi\Big]\nabla (\phi^3-\phi)dx \non\\
  &=& \frac{\varepsilon}{2} \int_Q \nabla |\Delta \phi|^2 dx  -\frac{1}{\varepsilon} \int_Q \Delta (\phi ^3-\phi) \nabla \phi dx\non\\
  &&  -\frac{1}{\varepsilon}\int_Q \Delta\phi \nabla (\phi^3-\phi) dx  +\frac{1}{2\varepsilon^3}\int_Q \nabla[(\phi^3-\phi)^2] dx\non\\
  \non\\
  &=& 0.\non
 \eea
 Thus, we conclude that
 \be
  \int_{Q}\dfrac{\delta E}{\delta\phi}\nabla\phi\, dx=0. \label{zero}
 \ee
 After integrating \eqref{NS} over $Q$, we infer from \eqref{incompressibility}, the periodic boundary condition \eqref{BC} and \eqref{zero} that \eqref{zerou} holds.
\end{proof}
\br By Lemma \ref{constant velocity mean}, if one assumes that the
average of the initial velocity vanishes, i.e.,
 $$\frac{1}{|Q|}\int_Q u_0dx=0,$$
  then we can apply the Poincar\'{e}--Wirtinger inequality to the solution $u$ such that the $\mathbf{H}^1$-norm of $u$ can be controlled by $\|\nabla u\|$. When a flow with non-vanishing average velocity $u$ is considered, as for the single Navier--Stokes equation (cf. \cite{Te}),
  we can introduce the variable  $\tilde{u}= u-\frac{1}{|Q|}\int_Q u dx$ and transform the problem \eqref{NS}--\eqref{IC} into a new one in terms of $\tilde{u}$ and $\phi$. Since $\frac{1}{|Q|}\int_Q u dx$ is a known constant determined by \eqref{zerou}, it is not difficult to verify that our results on existence and uniqueness of weak/strong solutions for the initial velocity with zero mean can be extended to this case with minor modifications. However, results on long-time dynamics in Section 5 are no longer valid, because the velocity will not decay to zero (we also refer to \cite{WXL} for a similar situation for the liquid crystal system).
\er

For the sake of simplicity, in the remaining part of this paper, we
will assume that the average flow vanishes. An important property of
the coupling system \eqref{NS}--\eqref{IC} is that it has a basic
energy law, which indicates the dissipative nature of the system. It
states that the total sum of the kinetic and elastic energy is
dissipated due to viscosity and other possible
regularization/relaxations rates. A formal
 derivation can be carried out by multiplying \eqref{NS}
by $u$, \eqref{phasefield} by $\frac{\delta E(\phi)}{\delta \phi}$,
respectively, and then integrating over $Q$. Consequently, we have

\begin{lemma}[Basic energy law] \label{BEL} Let $(u, \phi)$ be a smooth solution to the problem \eqref{NS}--\eqref{IC},
then the following dissipative energy inequality holds:
 \be
 \dfrac{d}{dt}\Big(\frac{1}{2}\|u(t)\|^2+E(\phi(t))\Big)+\mu\|\nabla u\|^2
 +\gamma\Big\|\dfrac{\delta E}{\delta\phi}\Big\|^2=0,\quad \forall\, t>0. \label{basic energy law}
 \ee
\end{lemma}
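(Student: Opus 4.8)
The plan is to turn the formal derivation indicated just above the statement into a rigorous one, exploiting that for a smooth solution every integration by parts below is justified and, under the periodic boundary conditions \eqref{BC}, produces no boundary terms. First I would test the momentum equation \eqref{NS} with $u$ in $\mathbf{L}^2_p(Q)$. The convective term vanishes, $\int_Q (u\cdot\nabla u)\cdot u\,dx=\frac12\int_Q u\cdot\nabla|u|^2\,dx=0$ by \eqref{incompressibility}; the pressure term vanishes, $\int_Q\nabla P\cdot u\,dx=-\int_Q P\,\nabla\cdot u\,dx=0$; and $\mu\int_Q\Delta u\cdot u\,dx=-\mu\|\nabla u\|^2$. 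This yields
\be
\frac12\frac{d}{dt}\|u\|^2+\mu\|\nabla u\|^2=\int_Q \Big(\dfrac{\delta E}{\delta\phi}\Big)(u\cdot\nabla\phi)\,dx. \non
\ee

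Next I would test the relaxed transport equation \eqref{phasefield} with $\frac{\delta E}{\delta\phi}$, which gives
\be
\int_Q \phi_t\,\dfrac{\delta E}{\delta\phi}\,dx+\int_Q (u\cdot\nabla\phi)\,\dfrac{\delta E}{\delta\phi}\,dx=-\gamma\Big\|\dfrac{\delta E}{\delta\phi}\Big\|^2. \non
\ee
Adding this to the previous identity, the two copies of $\int_Q(\frac{\delta E}{\delta\phi})(u\cdot\nabla\phi)\,dx$ cancel, so the whole proof reduces to the chain-rule identity $\int_Q \phi_t\,\frac{\delta E}{\delta\phi}\,dx=\frac{d}{dt}E(\phi(t))$, with $E$ given by \eqref{modified energy} and $\frac{\delta E}{\delta\phi}$ by \eqref{variation of energy}.

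To check this identity I would differentiate $E$ term by term. The penalty contributions are immediate: $\frac{d}{dt}\big[\frac12 M_1(A(\phi)-\alpha)^2\big]=M_1(A(\phi)-\alpha)\int_Q\phi_t\,dx$, and $\frac{d}{dt}\big[\frac12 M_2(B(\phi)-\beta)^2\big]=M_2(B(\phi)-\beta)\int_Q f(\phi)\phi_t\,dx$ after one integration by parts in space, and these coincide with the last two terms of \eqref{variation of energy} paired against $\phi_t$. For the Willmore part $E_\varepsilon(\phi)=\frac{k}{2\varepsilon}\int_Q|f(\phi)|^2\,dx$ I would differentiate under the integral using $\partial_t f(\phi)=-\varepsilon\Delta\phi_t+\frac1\varepsilon(3\phi^2-1)\phi_t$, obtaining $\frac{d}{dt}E_\varepsilon=-k\int_Q f(\phi)\Delta\phi_t\,dx+\frac{k}{\varepsilon^2}\int_Q(3\phi^2-1)f(\phi)\phi_t\,dx$, and then move the Laplacian onto $f(\phi)$ by periodicity to get $k\int_Q\big(-\Delta f(\phi)+\frac{1}{\varepsilon^2}(3\phi^2-1)f(\phi)\big)\phi_t\,dx=k\int_Q g(\phi)\phi_t\,dx$. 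Summing the three pieces reproduces exactly $\int_Q\phi_t\,\frac{\delta E}{\delta\phi}\,dx$, and \eqref{basic energy law} follows. The only genuine point of care is this bookkeeping: making sure the two integrations by parts in the Willmore term leave no extra term and that the pointwise factor $(3\phi^2-1)$ produced by differentiating the cubic nonlinearity in $f$ is precisely the one appearing in $g(\phi)$ — this matching is exactly what makes $g(\phi)$ the $L^2$-gradient of $E_\varepsilon$. For the smooth solutions in the statement one obtains equality; if the same bound is later needed for weaker solutions one runs the computation on Galerkin approximations and passes to the limit, retaining the dissipation terms by weak lower semicontinuity.
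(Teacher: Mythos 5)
Your proposal is correct and follows exactly the route the paper indicates for Lemma \ref{BEL}: test \eqref{NS} with $u$, test \eqref{phasefield} with $\frac{\delta E}{\delta\phi}$, cancel the coupling term $\int_Q \frac{\delta E}{\delta\phi}\,(u\cdot\nabla\phi)\,dx$, and use the chain-rule identity $\int_Q \phi_t\,\frac{\delta E}{\delta\phi}\,dx=\frac{d}{dt}E(\phi)$, which your term-by-term verification (Willmore part via $\partial_t f(\phi)$ and two periodic integrations by parts, plus the two penalty terms) establishes correctly. The paper only sketches this derivation in one sentence before the lemma, so your write-up is simply a fleshed-out version of the same argument, and your closing remark about recovering the inequality for weak solutions through Galerkin approximation and lower semicontinuity matches the paper's Proposition \ref{low}.
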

 Based on Lemma \ref{BEL}, we can apply the Galerkin method similar to that in \cite{DLL07} to prove the following result on existence and uniqueness of weak solutions
 to the problem \eqref{NS}--\eqref{IC}.

\begin{theorem}[Existence of weak solutions]\label{weak}
Let $n=3$. For any initial datum $(u_0, \phi_0)\in \dot{\mathbf{H}}
\times H^2_p$, $T>0$,   there exists at least one global weak
solution $(u, \phi)$ to the problem \eqref{NS}--\eqref{IC} that
satisfies
 \bea
  &&u \in L^{\infty}(0, T; \dot{\mathbf{H}} ) \cap L^2(0, T;
\dot{\mathbf{V}}); \label{lower bound for velocity}\\
&&\phi \in L^\infty(0, T; H_p^2)\cap L^2(0, T; H_p^4)\cap H^1(0, T;
L_p^2). \label{lower bound for phase}
 \eea
 In addition, the weak solution is unique provided that  $u\in L^8(0, T;
 \mathbf{L}^4_p)$.
\end{theorem}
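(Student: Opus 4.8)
The plan is to follow the classical Galerkin scheme adapted to the periodic setting, exactly as in \cite{DLL07}, using the basic energy law of Lemma \ref{BEL} as the fundamental source of a priori bounds. First I would fix an orthonormal basis $\{w_j\}$ of $\mathbf{H}$ consisting of eigenfunctions of the Stokes operator $S$ (which also form an orthogonal basis of $\mathbf{V}$ and $D(S)$), and an orthonormal basis $\{\psi_j\}$ of $\dot L^2_p$ of eigenfunctions of $-\Delta$ (supplemented by constants so that one can represent functions with nonzero mean). For each $m$ I would seek a finite-dimensional approximation $(u_m,\phi_m)$ solving the projected system: project \eqref{NS} onto $\mathrm{span}\{w_1,\dots,w_m\}$ and \eqref{phasefield} onto $\mathrm{span}\{\psi_1,\dots,\psi_m\}$. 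By the Cauchy--Lipschitz theorem this ODE system has a local solution; the a priori estimates below guarantee it is global on $[0,T]$.

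The key step is the uniform estimate. Testing the projected momentum equation with $u_m$, the projected phase equation with $\delta E(\phi_m)/\delta\phi$, adding, and using $\nabla\cdot u_m=0$ together with the cancellation of the transport and forcing terms (which is precisely the structure behind \eqref{basic energy law}), I obtain
\begin{equation}
\frac{d}{dt}\Big(\tfrac{1}{2}\|u_m\|^2+E(\phi_m)\Big)+\mu\|\nabla u_m\|^2+\gamma\Big\|\tfrac{\delta E(\phi_m)}{\delta\phi}\Big\|^2\le 0.\non
\end{equation}
Integrating in time and noting that $E(\phi)$ is bounded below and coercive on $H^2_p$ (it controls $\|\Delta\phi\|^2$ via $\|f(\phi)\|^2$, hence $\|\phi\|_{H^2}$ by \eqref{dh2} and \eqref{from Poincare}, once the lower-order polynomial terms are absorbed), this yields $u_m$ bounded in $L^\infty(0,T;\dot{\mathbf{H}})\cap L^2(0,T;\dot{\mathbf{V}})$ and $\phi_m$ bounded in $L^\infty(0,T;H^2_p)$ with $\delta E(\phi_m)/\delta\phi$ bounded in $L^2(0,T;L^2_p)$. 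From the latter, comparing terms in $\delta E(\phi_m)/\delta\phi=kg(\phi_m)+(\text{lower order})$ and inverting the biharmonic operator, one recovers $\phi_m$ bounded in $L^2(0,T;H^4_p)$; then $\phi_{m,t}$ is bounded in $L^2(0,T;L^2_p)$ from \eqref{phasefield} once $u_m\cdot\nabla\phi_m$ is estimated in $L^2$ by interpolation (e.g. $\|u_m\|_{L^6}\|\nabla\phi_m\|_{L^3}$), which also confirms \eqref{lower bound for phase}.

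Next I would pass to the limit. The uniform bounds together with Aubin--Lions give, up to a subsequence, strong convergence $u_m\to u$ in $L^2(0,T;\mathbf{H})$ and $\phi_m\to\phi$ in, say, $L^2(0,T;H^3_p)$ and $C([0,T];H^{2-\delta}_p)$, plus weak(-$*$) convergence in the spaces of \eqref{lower bound for velocity}--\eqref{lower bound for phase}. The only delicate point is the nonlinear forcing term $\frac{\delta E(\phi)}{\delta\phi}\nabla\phi$: it contains the highest-order term $-k\varepsilon\Delta^2\phi\,\nabla\phi$. I would handle it after integrating against a test function and rewriting it in the stress-tensor (divergence) form analogous to the identity $\Delta\phi\nabla\phi=\nabla\cdot(\nabla\phi\otimes\nabla\phi)-\tfrac12\nabla|\nabla\phi|^2$ used in Lemma \ref{constant velocity mean}, so that at most second derivatives of $\phi$ appear under the integral; strong $L^2(0,T;H^3_p)$ convergence of $\phi_m$ then suffices to pass to the limit in all the resulting quadratic and cubic terms. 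The penalty terms involving $A(\phi_m),B(\phi_m)$ pass to the limit trivially since they are continuous scalar functionals. This identifies $(u,\phi)$ as a weak solution; the initial conditions are recovered from the $C([0,T];\cdot)$ convergence (for $\phi$) and a standard weak-continuity argument in $\mathbf{V}'$ (for $u$).

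Finally, for uniqueness under the extra regularity $u\in L^8(0,T;\mathbf{L}^4_p)$, I would take two solutions $(u_1,\phi_1)$, $(u_2,\phi_2)$, write the system for the differences $v=u_1-u_2$, $\eta=\phi_1-\phi_2$, test the velocity difference equation with $v$ and the phase difference equation with an appropriate multiplier (essentially $\delta E(\phi_1)/\delta\phi-\delta E(\phi_2)/\delta\phi$, or $A^{-1}$ of it, controlling $\|\eta\|_{H^2}$ via the coercivity), and estimate the difference of the forcing terms and transport terms. The troublesome terms are bounded using $\int |v||\nabla u_2||v|\lesssim \|v\|_{L^4}^2\|\nabla u_2\|$ and, after Gagliardo--Nirenberg, $\|v\|_{L^4}^2\lesssim\|v\|^{1/2}\|\nabla v\|^{3/2}$ in $3$D, which by Young's inequality produces $\|\nabla u_2\|^4\|v\|^2$ — integrable in time precisely because $\|\nabla u_2\|\in L^2$ is not enough but the assumption $u_2\in L^8_tL^4_x$ provides the missing integrability in the cross terms coupling $v$ and $\eta$. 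Collecting everything gives a differential inequality $\frac{d}{dt}(\|v\|^2+\|\eta\|_{H^2}^2)\le C(t)(\|v\|^2+\|\eta\|_{H^2}^2)$ with $C\in L^1(0,T)$, and Gronwall's lemma yields uniqueness. The main obstacle throughout is the high-order nonlinear coupling term $\frac{\delta E(\phi)}{\delta\phi}\nabla\phi$; the resolution is systematic integration by parts into divergence form so that no derivative of order higher than two survives, after which the available compactness is exactly what is needed.
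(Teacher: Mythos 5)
Your proposal follows essentially the route the paper itself takes (and largely delegates): a Galerkin scheme whose uniform bounds come from the basic energy law of Lemma \ref{BEL}, the recovery of the $L^2(0,T;H^4_p)$ and $H^1(0,T;L^2_p)$ bounds for $\phi$ by comparison in the equation, Aubin--Lions compactness, and passage to the limit in the elastic forcing after integration by parts so that only lower-order derivatives of $\phi$ meet the (smooth, divergence-free) test functions; this is exactly the "Galerkin method similar to that in \cite{DLL07}" that the paper invokes. One small technical gloss: at the Galerkin level the cancellations behind \eqref{basic energy law} are only exact up to the finite-dimensional projections, so what one really gets (as the paper acknowledges in the proof of Proposition \ref{low}) is a weaker, inequality version of the energy law; this is harmless for all the bounds you use, but the phrase "exact cancellation" should be replaced by the projected version or a semi-Galerkin formulation.

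The one step that would fail as written is the trilinear estimate in your uniqueness argument. From $\int |v|\,|\nabla u_2|\,|v|\,dx \lesssim \|\nabla u_2\|\,\|v\|_{\mathbf{L}^4}^2 \lesssim \|\nabla u_2\|\,\|v\|^{1/2}\|\nabla v\|^{3/2}$, Young's inequality produces $\|\nabla u_2\|^4\|v\|^2$, and $\|\nabla u_2\|^4$ is \emph{not} integrable in time: the energy class only gives $\nabla u_2\in L^2(0,T;\mathbf{L}^2)$, and the hypothesis $u_2\in L^8(0,T;\mathbf{L}^4_p)$ does not upgrade it. The Serrin condition enters in a different way: use the antisymmetry of the convection term, $\int_Q (v\cdot\nabla u_2)\cdot v\,dx=-\int_Q (v\cdot\nabla v)\cdot u_2\,dx\le \|u_2\|_{\mathbf{L}^4}\|v\|_{\mathbf{L}^4}\|\nabla v\|\le C\|u_2\|_{\mathbf{L}^4}\|v\|^{1/4}\|\nabla v\|^{7/4}\le \epsilon\|\nabla v\|^2+C\|u_2\|_{\mathbf{L}^4}^{8}\|v\|^2$, and now $\|u_2\|_{\mathbf{L}^4}^{8}\in L^1(0,T)$ is precisely the assumed condition, so Gronwall closes; the cross terms coupling $v$ and $\eta$ need only the bounds $\phi_i\in L^\infty(0,T;H^2_p)\cap L^2(0,T;H^4_p)$ and not the Serrin condition. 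You should also say a word about why $v=u_1-u_2$ is an admissible test function (the usual weak--strong argument: the solution in the class $L^8(0,T;\mathbf{L}^4_p)$ satisfies the energy equality, the other only the energy inequality). With these corrections your sketch matches the statement and the paper's intended argument.
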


Besides, we can obtain the following uniform-in-time estimates on
weak solutions from the basic energy
 law:
\begin{proposition}
\label{low} Suppose $n=3$. For any initial data $u_0\in
\dot{\mathbf{H}}$, $\phi_0\in H_p^2$, the corresponding weak
solutions to the problem  \eqref{NS}--\eqref{IC} have the following
uniform estimates
 \be
 \|u(t)\|+\|\phi(t)\|_{H^2}\leq C, \quad \forall\, t\geq 0, \label{unilow}
 \ee
  \be \int_0^{+\infty} \left(\mu\|\nabla u(t)\|^2+\gamma\Big\|\dfrac{\delta E}{\delta\phi}(t)\Big\|^2\right)dt\leq C,
 \label{int}
 \ee
 where $C>0$ is a constant depending on $\|u_0\|,
 \|\phi_0\|_{H^2}$ and coefficients of the system except the viscosity $\mu$.
 \end{proposition}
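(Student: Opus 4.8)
The plan is to integrate the basic energy law of Lemma~\ref{BEL} in time and then extract all the stated bounds from the resulting relation. For smooth solutions \eqref{basic energy law} gives, after integration on $[0,t]$, the identity $\frac12\|u(t)\|^2+E(\phi(t))+\int_0^t\big(\mu\|\nabla u(s)\|^2+\gamma\|\frac{\delta E}{\delta\phi}(s)\|^2\big)\,ds=\frac12\|u_0\|^2+E(\phi_0)$; for weak solutions the Galerkin scheme of \cite{DLL07} (passing to the limit) yields the corresponding energy \emph{inequality}
\be
\frac12\|u(t)\|^2+E(\phi(t))+\int_0^t\Big(\mu\|\nabla u(s)\|^2+\gamma\Big\|\frac{\delta E}{\delta\phi}(s)\Big\|^2\Big)ds\le \frac12\|u_0\|^2+E(\phi_0)=:C_0,\non
\ee
valid for a.e.\ $t\ge0$. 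Since $\alpha=A(\phi_0)$ and $\beta=B(\phi_0)$, the two penalty terms in \eqref{modified energy} vanish at $t=0$, so $C_0=\frac12\|u_0\|^2+E_\varepsilon(\phi_0)<\infty$ (note $\phi_0\in H^2_p\hookrightarrow L^\infty$ for $n=3$, hence $f(\phi_0)\in L^2_p$). From this inequality the bound $\|u(t)\|\le\sqrt{2C_0}$ and the dissipation estimate \eqref{int} follow immediately, the latter upon letting $t\to+\infty$. It therefore only remains to convert the bound $E(\phi(t))\le C_0$ into the uniform $H^2_p$ estimate on $\phi$.

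To do this I would peel off the three nonnegative contributions to $E(\phi)$ in turn. First, $\tfrac12 M_1(A(\phi(t))-\alpha)^2\le C_0$ bounds the mean $\big|\int_Q\phi(t)\,dx\big|$. Next, $\tfrac12 M_2(B(\phi(t))-\beta)^2\le C_0$ bounds $B(\phi(t))$; as both summands in the definition \eqref{modified energy terms} of $B$ are nonnegative, this controls $\|\nabla\phi(t)\|$ and $\|\phi(t)^2-1\|$, hence $\|\phi(t)\|_{L^4}$, and together with the mean bound and the Poincar\'e--Wirtinger estimates \eqref{from Poincare} we obtain $\|\phi(t)\|_{H^1}\le C$. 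The Sobolev embedding $H^1_p\hookrightarrow L^6_p$ (valid in three dimensions) then gives $\|\phi(t)^3-\phi(t)\|\le C$. Finally $E_\varepsilon(\phi(t))=\tfrac{k}{2\varepsilon}\|f(\phi(t))\|^2\le C_0$ yields $\|f(\phi(t))\|\le\sqrt{2\varepsilon C_0/k}$, so from $-\varepsilon\Delta\phi=f(\phi)-\tfrac1\varepsilon(\phi^3-\phi)$ we get $\|\Delta\phi(t)\|\le C$, and the elliptic estimate \eqref{dh2} (equivalently \eqref{from Poincare}) produces $\|\phi(t)\|_{H^2}\le C$. This proves \eqref{unilow}.

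The point worth emphasizing is that the resulting constant $C$ is independent of the viscosity $\mu$: the right-hand side $C_0$ of the energy inequality depends only on $\|u_0\|,\|\phi_0\|_{H^2}$ and on $\varepsilon,k,M_1,M_2,\alpha,\beta$; the term $\mu\|\nabla u\|^2$ sits on the dissipation side with the favorable sign and is simply discarded; and each step of the chain recovering $\|\phi\|_{H^2}$ from $E(\phi)$ involves only $\varepsilon,k,M_1,M_2,\alpha,\beta,|Q|$. The only genuinely nontrivial step is this last chain — in particular controlling the cubic term $\phi^3$ inside $f(\phi)$ in order to close the $H^2$ bound — which in dimension three is precisely what the $H^1\hookrightarrow L^6$ embedding handles; for weak solutions one should additionally invoke \eqref{lower bound for phase} to know $\phi(t)\in H^2_p$ for a.e.\ $t$, making the above manipulations legitimate, and then extend \eqref{unilow} to all $t\ge0$ by weak lower semicontinuity of the norms in time.
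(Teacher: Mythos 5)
Your proof is correct and follows essentially the same route as the paper: integrate the basic energy law (in its Galerkin/weak-solution inequality form) to get \eqref{int} and the bounds on $\|u(t)\|$ and $E(\phi(t))$, then recover $\|\phi(t)\|_{H^2}$ from the boundedness of $A(\phi)$, $B(\phi)$ and $\|f(\phi)\|$ via the mean-value bound, the $H^1$ bound, the $H^1\hookrightarrow L^6$ embedding for the cubic term, and the elliptic estimate. Your write-up merely spells out the chain that the paper states in one line, including the observation that the constant is independent of $\mu$.
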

\begin{proof}
We can derive a weaker version of the basic energy law rigorously
via the Galerkin procedure such that the weak solution $(u, \phi)$
to problem  \eqref{NS}--\eqref{IC} satisfies
 \be \frac12\|u(t)\|^2+ E(\phi(t)) +\int_0^{t} \left(\mu\|\nabla u(s)\|^2+\gamma\Big\| \dfrac{\delta E}{\delta\phi}(s)\Big\|^2\right)ds\leq
\frac12\|u_0\|^2+ E(\phi_0), \quad \forall\, t\geq 0. \non
 \ee
Recalling the definition of $E$, we know $\frac12\|u_0\|^2+ E(0)$
can be estimated by a constant depending on $\|u_0\|,
 \|\phi_0\|_{H^2}$ and coefficients of the system, but not on $\mu$. Thus \eqref{int} holds and $\|u(t)\|$, $E(t)$ are bounded. On the other hand, we infer from the boundedness of $E(t)$ that
 \be
  |A(\phi)|\leq C, \quad |B(\phi)|\leq C, \quad \|f(\phi)\|\leq C,\quad \forall\, t\geq 0.\non
 \ee
 Hence $|\int_Q\phi dx|$ and $\|\nabla \phi\|$ are bounded. Then by the definition of $f(\phi)$ and Sobolev embedding theorems, we can deduce that $\|\phi\|_{H^2}$ is bounded. The proof is complete.
\end{proof}

\section{Existence of strong solutions}\setcounter{equation}{0}

In this section, we study the existence of strong solutions. For
this purpose, it suffices to derive proper higher-order uniform
estimates for the Galerkin approximation of weak solutions and then
pass to limit. We observe that the entire calculation is identical
to that as we work with classical (smooth) solutions to the problem
\eqref{NS}--\eqref{IC}. Thus, for the sake of simplicity, all the
calculations below will be carried out formally for smooth
solutions.

By Sobolev embedding theorems in the three dimensional case, we can derive the following
 estimates that will be frequently used later.
 \begin{lemma}\label{eqes}
 Suppose $n=3$. We have
 $$  \|\nabla \Delta \phi\| \leq
 C \left\|\dfrac{\delta E}{\delta\phi}\right\|^\frac12+C,\quad
 \|\Delta^2 \phi\| \leq  \frac{1}{k\varepsilon} \left\|\dfrac{\delta E}{\delta\phi}\right\|+C,\quad \forall\, \phi\in H^4_p,
 $$
 where $C$ is a constant depending on $\|\phi\|_{H^2}$ and coefficients of the system. Besides,
 $$\|\nabla \Delta^2 \phi\|\leq \frac{1}{k\varepsilon} \left \|\nabla \dfrac{\delta E}{\delta\phi}\right\|+C, \quad \forall\, \phi\in H^5_p,$$
  where $C$ is a constant depending on $\|\phi\|_{H^3}$ and coefficients of the system.
 \end{lemma}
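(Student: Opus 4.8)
The plan is to work directly from the explicit expression \eqref{variation of energy} together with the definitions of $f$ and $g$. Since $-\Delta f(\phi)=\varepsilon\Delta^2\phi-\frac{1}{\varepsilon}\Delta(\phi^3-\phi)$, substituting into \eqref{variation of energy} yields the pointwise identity
\be
k\varepsilon\,\Delta^2\phi=\dfrac{\delta E}{\delta\phi}+\dfrac{k}{\varepsilon}\Delta(\phi^3-\phi)-\dfrac{k}{\varepsilon^2}(3\phi^2-1)f(\phi)-M_1(A(\phi)-\alpha)-M_2(B(\phi)-\beta)f(\phi). \non
\ee
I would then take the $\mathbf{L}^2_p$ norm of both sides; the point is that every term on the right except $\dfrac{\delta E}{\delta\phi}$ is bounded by a constant depending only on $\|\phi\|_{H^2}$ and the coefficients. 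Using the three dimensional embeddings $H^2_p\hookrightarrow L^\infty$ and $H^1_p\hookrightarrow L^6$, one has $\|\Delta(\phi^3-\phi)\|\le C(\|\phi^2\Delta\phi\|+\|\phi|\nabla\phi|^2\|+\|\Delta\phi\|)\le C(\|\phi\|_{H^2})$; likewise $\|(3\phi^2-1)f(\phi)\|\le\|3\phi^2-1\|_{L^\infty}\|f(\phi)\|\le C(\|\phi\|_{H^2})$; finally $|A(\phi)|\le C\|\phi\|$, $|B(\phi)|\le C(\|\phi\|_{H^1})$ and $\|f(\phi)\|\le C(\|\phi\|_{H^2})$ control the last two terms. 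This gives the claimed bound on $\|\Delta^2\phi\|$.

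For $\|\nabla\Delta\phi\|$ the idea is to reduce to the previous estimate via an integration by parts, which leaves no boundary contribution because of the periodic boundary conditions:
\be
\|\nabla\Delta\phi\|^2=-(\Delta\phi,\Delta^2\phi)\le\|\Delta\phi\|\,\|\Delta^2\phi\|\le\|\phi\|_{H^2}\Big(\dfrac{1}{k\varepsilon}\Big\|\dfrac{\delta E}{\delta\phi}\Big\|+C\Big). \non
\ee
Since $\|\phi\|_{H^2}$ is among the quantities on which the generic constant is allowed to depend, taking square roots and using $\sqrt{a+b}\le\sqrt a+\sqrt b$ yields the first estimate.

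For the last inequality, the plan is to differentiate the displayed identity once; the spatially constant term $M_1(A(\phi)-\alpha)$ drops out and one is left with
\be
k\varepsilon\,\nabla\Delta^2\phi=\nabla\dfrac{\delta E}{\delta\phi}+\dfrac{k}{\varepsilon}\nabla\Delta(\phi^3-\phi)-\dfrac{k}{\varepsilon^2}\nabla\big((3\phi^2-1)f(\phi)\big)-M_2(B(\phi)-\beta)\nabla f(\phi). \non
\ee
Taking the $\mathbf{L}^2_p$ norm, I would bound the lower order terms using $H^3_p\hookrightarrow W^{1,\infty}$, $H^1_p\hookrightarrow L^6$ and $H^2_p\hookrightarrow L^\infty$: $\nabla\Delta(\phi^3-\phi)$ is a finite sum of terms of type $\phi^2\nabla\Delta\phi$, $\phi\nabla\phi\,\Delta\phi$, $|\nabla\phi|^2\nabla\phi$, $\phi\nabla\phi\,\nabla^2\phi$ and $\nabla\Delta\phi$, each with $L^2$ norm $\le C(\|\phi\|_{H^3})$; similarly $\nabla f(\phi)=-\varepsilon\nabla\Delta\phi+\frac{1}{\varepsilon}(3\phi^2-1)\nabla\phi$ and $\nabla\big((3\phi^2-1)f(\phi)\big)=6\phi\nabla\phi\,f(\phi)+(3\phi^2-1)\nabla f(\phi)$ are controlled in $L^2$ by $C(\|\phi\|_{H^3})$, while $|B(\phi)-\beta|\le C(\|\phi\|_{H^1})$. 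Collecting the bounds gives the last estimate.

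All of this is elementary, resting only on product estimates and Sobolev embeddings in three dimensions; the one point that needs care is the bookkeeping of the nonlinear terms generated by $\Delta(\phi^3-\phi)$ and its gradient, and keeping track of which Sobolev norm of $\phi$, namely $H^2$ for the first two bounds and $H^3$ for the third, is actually needed to control each of them.
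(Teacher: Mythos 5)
Your argument is correct and follows essentially the same route as the paper: isolate $k\varepsilon\Delta^2\phi$ from the explicit form of $\dfrac{\delta E}{\delta\phi}$, bound the remaining nonlinear terms via H\"older and the three dimensional embeddings to get the $\|\Delta^2\phi\|$ and (after one differentiation, with the constant $M_1$ term dropping out) the $\|\nabla\Delta^2\phi\|$ estimates, and obtain $\|\nabla\Delta\phi\|$ by the same integration by parts $\|\nabla\Delta\phi\|^2=-(\Delta\phi,\Delta^2\phi)\le C\|\Delta^2\phi\|$ used in the paper. No gaps.
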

 \begin{proof}
 Recalling \eqref{variation of energy}, we can rewrite $\dfrac{\delta
E(\phi)}{\delta\phi}$ as
 \be \dfrac{\delta
E(\phi)}{\delta\phi} =k\varepsilon\Delta^2 \phi+H(\phi), \non
 \ee
 where
 \bea
H(\phi)&=&   - \frac{k}{\varepsilon}\Delta(\phi^3-\phi)  + \frac{k}{\varepsilon^2}(3\phi^2-1)f(\phi)+ M_1(A(\phi)-\alpha)+M_2(B(\phi)-\beta)f(\phi)\non\\
&=&
-\frac{6k}{\varepsilon}\phi|\nabla\phi|^2-\frac{2k}{\varepsilon}(3\phi^2-1)\Delta\phi+\frac{k}{\varepsilon^3}(3\phi^2-1)(\phi^3-\phi)\non\\
&& +M_1(A(\phi)-\alpha)+M_2(B(\phi)-\beta)f(\phi).\label{v1E}
 \eea
 By the H\"older inequality and Sobolev embedding theorems, we infer that
 \bea
 \| \Delta^2 \phi\|&\leq& \frac{1}{k\varepsilon}\Big\| \dfrac{\delta
E(\phi)}{\delta\phi}\Big\|+ C\|\phi\|_{L^\infty}\|\nabla \phi\|_{\mathbf{L}^4}^2+C\|\phi\|_{L^\infty}^2\|\Delta \phi\|\non\\
&& +C\|\Delta\phi\|+C(\|\phi\|_{L^\infty}^5+1)+ M_1\|\phi\|_{L^1}+M_1\alpha\non\\
&& +CM_2(\beta+ \|\nabla \phi\|^2+C\|\phi\|_{L^4}^4+C)(\|\Delta \phi\|+C\|\phi\|_{L^6}^3+C\|\phi\|)\non\\
&\leq& \frac{1}{k\varepsilon} \left\|\dfrac{\delta
E}{\delta\phi}\right\|+C,\quad \forall \, \phi\in H^4_p,\label{De2}
 \eea
 where $C$ is a constant depending on $\|\phi\|_{H^2}$ and coefficients of the
 system. \\ 
 The estimate for $\|\nabla \Delta \phi\|$ easily follows from \eqref{De2} and the fact $$\|\nabla \Delta \phi\|^2=\left|\int_\Omega \Delta \phi \Delta^2\phi dx\right|\leq C\| \Delta^2 \phi\|.$$
 Concerning the estimate for $\|\nabla \Delta^2 \phi\|$, we just apply $\nabla$ to $\dfrac{\delta
E(\phi)}{\delta\phi}$ and get
\be 
\nabla\dfrac{\delta E(\phi)}{\delta\phi}
=k\varepsilon\nabla\Delta^2 \phi+\nabla{H}(\phi),\non
 \ee
 where
 \bea
\nabla{H}(\phi)&=&-\frac{6k}{\varepsilon}|\nabla\phi|^2\nabla\phi-\frac{12k}{\varepsilon}\phi\nabla\phi\cdot\nabla\nabla\phi
-\frac{12k}{\varepsilon}\phi\nabla\phi\Delta\phi-\frac{2k}{\varepsilon}(3\phi^2-1)\nabla\Delta\phi
\non\\
&&\;+\frac{k}{\varepsilon^3}(3\phi^2-1)^2\nabla\phi+\frac{6k}{\varepsilon^3}\phi(\phi^3-\phi)\nabla\phi
+M_2\big(B(\phi)-\beta \big)\nabla{f}(\phi).  
\non
 \eea 
 Using the
Sobolev embedding theorems, we infer that 
 \bea
\|\nabla\Delta^2\phi\| 
 &\leq&
\frac{1}{k\varepsilon}\left\|\nabla\dfrac{\delta
E(\phi)}{\delta\phi}\right\|
 +C\|\nabla\phi\|_{L^6}^3+C\|\phi\|_{L^\infty}\|\nabla\phi\|_{L^6}\|\nabla^2\phi\|_{L^3}
 \non\\
 && \ \ +C(\|\phi\|_{L^\infty}^2+1)\|\nabla\Delta\phi\|
    +C(\|\phi\|_{L^\infty}^4+\|\phi\|_{L^\infty}^2+1)\|\nabla\phi\|
    \non\\
 &&\ \ 
    +C\Big[\|\nabla\Delta\phi\|+
(\|\phi\|_{L^\infty}^2+1)\|\nabla\phi\| \Big]
 \non\\
 &\leq&
 \frac{1}{k\varepsilon}\left\|\nabla\dfrac{\delta{E}(\phi)}{\delta\phi}\right\|+C(\|\Delta\phi\|^3+1)
 \non\\
 &&\ \ + C\|\nabla^2\phi\|^{\frac12}\big(\|\nabla\Delta\phi\|^{\frac12}+1\big)+C(\|\nabla\Delta\phi\|+1)
  \non\\
&\leq&\frac{1}{k\varepsilon}\left\|\nabla\dfrac{\delta{E}(\phi)}{\delta\phi}\right\|+C,\non
\eea where $C$ is a constant depending on $\|\phi\|_{H^3}$ and
coefficients of the system. The proof is complete.
 \end{proof}
Since our system \eqref{NS}--\eqref{phasefield} contains the
Navier--Stokes equations as a subsystem, in the three dimensional
case, one cannot expect that the weak solutions will become regular
for strictly positive time. But it is worth noting that, due to the
weak coupling in the phase-field equation \eqref{phasefield} that
only one lower order term $u\cdot\nabla\phi$ is involved with $u$ in
the evolution equation, we can first derive certain regularity
results for the phase function $\phi$ and show that it turns out to
be regular for $t>0$.
\begin{lemma} \label{h3phase}
Let $n=3$. For any smooth solution to the problem
\eqref{NS}--\eqref{IC}, it holds that
 \be \dfrac{d}{dt}\|\nabla\Delta\phi\|^2
+k\gamma\varepsilon\|\nabla\Delta^2\phi\|^2\leq C(\|\nabla
u\|^2+1)\|\nabla\Delta\phi\|^2+C(1+\|\nabla u\|^2),\label{dphi3}
 \ee
where $C>0$ is a constant depending on $\|u_0\|,
 \|\phi_0\|_{H^2}$ and coefficients of the system.
\end{lemma}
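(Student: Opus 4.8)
The plan is to perform a higher-order energy estimate for the phase-field equation \eqref{phasefield} at the level of $\|\nabla\Delta\phi\|$, using the splitting $\frac{\delta E(\phi)}{\delta\phi}=k\varepsilon\Delta^2\phi+H(\phi)$ from Lemma \ref{eqes} to separate the leading biharmonic part (which will produce the dissipation $\|\nabla\Delta^2\phi\|^2$) from the genuinely lower-order remainder $H(\phi)$. Concretely, I would multiply \eqref{phasefield} by $\Delta^3\phi$ and integrate over $Q$; as agreed all the computations are carried out formally on smooth solutions (rigorously one would argue on the Galerkin approximations). Integrating by parts repeatedly and using the periodic boundary conditions, so that no boundary terms arise, the time-derivative term yields $\tfrac12\frac{d}{dt}\|\nabla\Delta\phi\|^2$, while $-\gamma k\varepsilon\int_Q\Delta^2\phi\,\Delta^3\phi\,dx=\gamma k\varepsilon\|\nabla\Delta^2\phi\|^2$ furnishes the dissipation; after moving the latter to the left one arrives at
\[
\frac12\frac{d}{dt}\|\nabla\Delta\phi\|^2+k\gamma\varepsilon\|\nabla\Delta^2\phi\|^2=\int_Q(u\cdot\nabla\phi)\Delta^3\phi\,dx+\gamma\int_Q H(\phi)\Delta^3\phi\,dx.
\]

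To handle the right-hand side I would integrate by parts once more in each term, rewriting it as $-\int_Q\nabla(u\cdot\nabla\phi)\cdot\nabla\Delta^2\phi\,dx-\gamma\int_Q\nabla H(\phi)\cdot\nabla\Delta^2\phi\,dx$, so that the highest-order factor present is the fifth-order dissipation quantity $\nabla\Delta^2\phi$; splitting it off by Cauchy--Schwarz and Young and absorbing the resulting $\|\nabla\Delta^2\phi\|^2$ on the left, it remains to control $\|\nabla H(\phi)\|$ and $\|\nabla(u\cdot\nabla\phi)\|$. For the first, starting from the explicit expression for $\nabla H(\phi)$ computed in the proof of Lemma \ref{eqes}, the uniform bound $\|\phi\|_{H^2}\le C$ of Proposition \ref{low} together with the Gagliardo--Nirenberg inequalities (in particular $\|\nabla^2\phi\|_{L^3}\le C\|\nabla\Delta\phi\|^{1/2}+C$ and $\|\nabla f(\phi)\|\le C\|\nabla\Delta\phi\|+C$) give the \emph{linear} bound $\|\nabla H(\phi)\|\le C(\|\nabla\Delta\phi\|+1)$, so this term contributes at most $\tfrac{k\gamma\varepsilon}{4}\|\nabla\Delta^2\phi\|^2+C(\|\nabla\Delta\phi\|^2+1)$. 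For the transport term I would expand $\nabla(u\cdot\nabla\phi)=\nabla u\cdot\nabla\phi+u\cdot\nabla^2\phi$ and use $\|u\|_{\mathbf{L}^6}\le C\|\nabla u\|$ (valid since $u$ has zero average, cf. Lemma \ref{constant velocity mean}) together with the interpolation inequalities $\|\nabla\phi\|_{L^\infty}\le C\|\nabla\Delta\phi\|^{3/4}+C$ and $\|\nabla^2\phi\|_{L^3}\le C\|\nabla\Delta\phi\|^{1/2}+C$, which yields $\|\nabla(u\cdot\nabla\phi)\|\le C\|\nabla u\|\big(\|\nabla\Delta\phi\|^{3/4}+1\big)$.

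The step requiring the most care --- and the reason for the precise shape of the right-hand side of \eqref{dphi3} --- is the final bookkeeping: after applying Young's inequality to the transport contribution one is led to $C\|\nabla u\|^2\big(\|\nabla\Delta\phi\|^{3/2}+1\big)$, together with $C\|\nabla u\|^2(\|\nabla\Delta\phi\|+1)$ from the $u\cdot\nabla^2\phi$ piece, and the \emph{subquadratic} powers of $\|\nabla\Delta\phi\|$ must be absorbed through $\|\nabla\Delta\phi\|^{3/2}\le\|\nabla\Delta\phi\|^2+1$ and $\|\nabla\Delta\phi\|\le\|\nabla\Delta\phi\|^2+1$, so that only the \emph{first} power of $\|\nabla u\|^2$ ever appears. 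This is crucial because $\|\nabla u\|^2$ is the only norm of $u$ that is integrable in time by Proposition \ref{low}; any higher power $\|\nabla u\|^p$ with $p>2$ would be useless for the Gronwall argument later used to propagate a uniform-in-time bound on $\|\nabla\Delta\phi\|$. Collecting all the bounds, absorbing the $\|\nabla\Delta^2\phi\|^2$ terms on the left and multiplying by $2$, one obtains exactly \eqref{dphi3}, with $C$ depending on $\|u_0\|$, $\|\phi_0\|_{H^2}$ and the coefficients of the system only through the a priori estimates of Proposition \ref{low}.
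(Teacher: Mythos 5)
Your proof is correct and follows essentially the same route as the paper: test the phase-field equation with $\pm\Delta^3\phi$, extract the dissipation $k\gamma\varepsilon\|\nabla\Delta^2\phi\|^2$ from the leading biharmonic part of $\dfrac{\delta E}{\delta\phi}$, and bound the remainder and the transport term via Gagliardo--Nirenberg/Agmon interpolation with the uniform $H^2$ bound of Proposition \ref{low}, keeping only the first power of $\|\nabla u\|^2$. The only cosmetic difference is that you organize the lower-order terms through the splitting $\dfrac{\delta E}{\delta\phi}=k\varepsilon\Delta^2\phi+H(\phi)$ of Lemma \ref{eqes} (with slightly non-sharp but valid interpolation exponents), whereas the paper groups them as $I_1+I_2+I_3$ from \eqref{variation of energy}; the estimates are equivalent.
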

\begin{proof}
Multiplying \eqref{phasefield} by $-\Delta^3\phi$, integrating over
$Q$, we have
 \be
\frac12\dfrac{d}{dt}\|\nabla\Delta\phi\|^2=-\big(\nabla(u\cdot\nabla\phi),
\nabla\Delta^2\phi\big) +\gamma\Big(\nabla\dfrac{\delta
E}{\delta\phi}, \nabla\Delta^2\phi \Big). \label{phidh3}
 \ee
 Using the lower-order uniform estimates in Proposition \ref{low}, we estimate the first term on the right-hand side of \eqref{phidh3} as follows
 \begin{eqnarray}
&&\big(\nabla(u\cdot\nabla\phi), \nabla\Delta^2\phi\big) \non\\
&\leq&
\frac{k\gamma\varepsilon}{8}\|\nabla\Delta^2\phi\|^2+C\|\nabla
u\cdot\nabla\phi\|^2+C\|u\cdot\nabla^2\phi\|^2  \non\\
&\leq&
\frac{k\gamma\varepsilon}{8}\|\nabla\Delta^2\phi\|^2+C\|\nabla
u\|^2\|\nabla\phi\|_{\mathbf{L}^{\infty}}^2+C\|u\|_{\mathbf{L}^6}^2\|\nabla^2\phi\|_{\mathbf{L}^3}^2
\non\\
&\leq&
\frac{k\gamma\varepsilon}{8}\|\nabla\Delta^2\phi\|^2+C\|\nabla
u\|^2\|\nabla\phi\|_{\mathbf{H}^1}\|\nabla\phi\|_{\mathbf{H}^2}\non\\
&\leq&
\frac{k\gamma\varepsilon}{8}\|\nabla\Delta^2\phi\|^2+C\|\nabla
u\|^2(\|\nabla\Delta\phi\|^2+1).\non
\end{eqnarray}
For the second term, we infer from \eqref{variation of energy} that
\bea &&\gamma\Big(\nabla\dfrac{\delta E}{\delta\phi},
\nabla\Delta^2\phi \Big)\non\\
&=& \gamma\left(\nabla\Big(kg(\phi)+ M_1(A(\phi)-\alpha)+M_2(B(\phi)-\beta)f(\phi)\Big), \nabla\Delta^2\phi \right) \non\\
&=& k\gamma(\nabla \Delta f(\phi) , \nabla \Delta^2 \phi)- \frac{k\gamma}{\varepsilon^2}( \nabla [(3\phi^2-1)f(\phi)], \nabla \Delta^2\phi)\non\\
&& +M_2\gamma(B(\phi)-\beta)(\nabla f(\phi), \nabla \Delta^2\phi)\non\\
&:=& I_1+I_2+I_3, \non
 \eea
 where
 \bea I_1 &\leq&  -k\gamma\varepsilon
\|\nabla\Delta^2\phi\|^2 +\frac{k\gamma}{\varepsilon} \|\nabla
\Delta (\phi^3-\phi)\|\|\nabla \Delta^2\phi\|
\non\\
 &\leq& -\frac{7k\gamma\varepsilon}{8}\|\nabla\Delta^2\phi\|^2+C\|\nabla \Delta \phi\|^2 +C\|\phi\|_{L^\infty}^2\|\nabla \phi\|^2_{\mathbf{L}^6}\|\Delta \phi\|^2_{L^3}\non\\
 &&+ C\|\phi\|_{L^\infty}^4\|\nabla \Delta \phi\|^2+C\|\nabla \phi\|_{\mathbf{L}^6}^6 \non\\
 &\leq& -\frac{7k\gamma\varepsilon}{8}\|\nabla\Delta^2\phi\|^2+C(\|\nabla\Delta\phi\|^2+1),\non
 \eea
 \bea
 I_2&\leq& \frac{k\gamma}{\varepsilon^2}\left(\|6f(\phi) \phi\nabla \phi\|\|\nabla \Delta^2 \phi\|+ \|(3\phi^2-1)\nabla f(\phi)\|\|\nabla \Delta^2 \phi\|\right)
 \non\\
 &\leq& \frac{k\gamma\varepsilon}{8}\|\nabla\Delta^2\phi\|^2+C\|\Delta \phi\|_{L^3}^2\|\nabla \phi\|_{\mathbf{L}^6}^2\|\phi\|_{L^\infty}^2+C\|\nabla \phi\|^2\|\phi\|_{L^\infty}^4\|\phi^2-1\|_{L^\infty}^2\non\\
 &&+ C\|\phi^2-1\|_{L^\infty}^2(\|\nabla \Delta \phi\|^2+ \|\phi^2-1\|_{L^\infty}^2\|\nabla \phi\|^2)\non\\
 &\leq& \frac{k\gamma\varepsilon}{8}\|\nabla\Delta^2\phi\|^2+C(\|\nabla\Delta\phi\|^2+1),\non
 \eea
 \bea
 I_3&\leq&  \frac{k\gamma\varepsilon}{8}\|\nabla\Delta^2\phi\|^2+C(B(\phi)-\beta)^2\|\nabla f(\phi)\|^2\non\\
 &\leq& \frac{k\gamma\varepsilon}{8}\|\nabla\Delta^2\phi\|^2 + C(\|\nabla \phi\|^4+ \|\phi^2-1\|^4+1)(\|\nabla \Delta \phi\|^2+ \|\phi^2-1\|_{L^\infty}^2\|\nabla \phi\|^2)\non\\
 &\leq& \frac{k\gamma\varepsilon}{8}\|\nabla\Delta^2\phi\|^2 + C(\|\nabla\Delta\phi\|^2+1). \non
 \eea
Collecting the above estimates together, we arrive at our conclusion
\eqref{dphi3}.
 \end{proof}

Based on the higher-order differential inequality \eqref{dphi3} for the phase function 
$\phi$, we get

 \begin{proposition} \label{comp}
Let $n=3$.  For any $u_0\in  \dot{\mathbf{H}}$, $\phi_0\in
H_p^2$, the weak solution to the problem \eqref{NS}--\eqref{IC}
satisfies
 \be
 \label{pos-t-regu}
 \|\phi(t)\|_{H^3}\leq C\left(1+\frac1t\right) \ \  \text{and}\ \ \|\nabla \phi(t)\|_{\mathbf{L}^\infty}\leq C\left(1+\frac1t\right), \quad  \forall\, t\,>\,0,
 \ee
 where $C$ is a constant depending on
  $\|u_0\|$, $\|\phi_0\|_{H^2}$ and coefficients of the system.
  Moreover, if we further assume that $\phi_0\in H^3_p$, then
 \be
 \label{phih3}
 \|\phi(t)\|_{H^3}\leq C\ \ \text{and}\ \ \|\nabla \phi(t)\|_{\mathbf{L}^\infty}\leq C, \quad  \forall\, t\,\geq\,0,
 \ee where $C$ is a constant depending on
  $\|u_0\|$, $\|\phi_0\|_{H^3}$ and coefficients of the system.
 \end{proposition}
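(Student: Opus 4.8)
The plan is to integrate the higher-order differential inequality \eqref{dphi3} of Lemma \ref{h3phase} by a uniform Gronwall argument, using the global dissipation estimate \eqref{int} to control the time-averages of the coefficients over unit time intervals. Writing $y(t)=\|\nabla\Delta\phi(t)\|^2$, $g(t)=C(\|\nabla u(t)\|^2+1)$ and $h(t)=C(1+\|\nabla u(t)\|^2)$, inequality \eqref{dphi3} takes the form $y'\le gy+h$. The first task is to check the three integral bounds required by the uniform Gronwall lemma (cf. \cite{Te}): from \eqref{int} one gets $\int_t^{t+1}g\,ds\le C$ and $\int_t^{t+1}h\,ds\le C$ for all $t\ge 0$ (the constant depending on $\mu$ via $\int_0^\infty\|\nabla u\|^2\,ds\le C/\mu$); and, squaring the interpolation bound of Lemma \ref{eqes}, $\|\nabla\Delta\phi\|^2\le C\|\delta E/\delta\phi\|+C$, so that Cauchy--Schwarz together with \eqref{int} yields $\int_t^{t+1}y\,ds\le C\big(\int_0^\infty\|\delta E/\delta\phi\|^2\,ds\big)^{1/2}+C\le C$, uniformly in $t$.

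With these three bounds, the uniform Gronwall lemma on $[t,t+1]$ gives $y(t+1)\le C$ for every $t\ge 0$, i.e. $\|\nabla\Delta\phi(t)\|\le C$ for $t\ge 1$. To capture the behaviour as $t\to 0^+$ --- where $\phi_0\in H^2_p$ need not lie in $H^3_p$, so that $y(0)$ may be infinite --- I would instead apply the same lemma on the shrinking interval $[t/2,t]$ for $t\in(0,1]$: the averages of $g,h,y$ over $[t/2,t]$ are still bounded by the same $C$, while the interval has length $t/2$, so the conclusion reads $y(t)\le\big(\tfrac{2C}{t}+C\big)e^{C}\le C\big(1+\tfrac1t\big)$. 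Recalling \eqref{from Poincare} and the uniform bound on $|\int_Q\phi\,dx|$ from Proposition \ref{low}, this gives $\|\phi(t)\|_{H^3}\le C(1+\tfrac1t)$ for all $t>0$; the estimate for $\|\nabla\phi(t)\|_{\mathbf{L}^\infty}$ then follows from the Sobolev embedding $H^2_p\hookrightarrow L^\infty$ in three dimensions applied to $\nabla\phi\in\mathbf{H}^2_p$.

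For the last assertion, if in addition $\phi_0\in H^3_p$ then $y(0)=\|\nabla\Delta\phi_0\|^2<\infty$, and on the fixed interval $[0,1]$ the classical Gronwall inequality applied to $y'\le gy+h$ gives $y(t)\le\big(y(0)+\int_0^1 h\,ds\big)\exp\big(\int_0^1 g\,ds\big)\le C$ for $t\in[0,1]$, with $C$ now depending on $\|\phi_0\|_{H^3}$; for $t\ge 1$ the uniform Gronwall bound above already provides $y(t)\le C$. Hence $\|\phi(t)\|_{H^3}\le C$ and $\|\nabla\phi(t)\|_{\mathbf{L}^\infty}\le C$ for all $t\ge 0$. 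All of these computations are carried out rigorously on the Galerkin approximations and then transferred to the limit by weak lower semicontinuity, exactly as in the proof of Proposition \ref{low}. The argument is essentially bookkeeping; the step I would expect to be the main obstacle is securing the uniform-in-$t$ bound on $\int_t^{t+1}\|\nabla\Delta\phi\|^2\,ds$, which hinges on the fact that Lemma \ref{eqes} controls $\|\nabla\Delta\phi\|$ by $\|\delta E/\delta\phi\|^{1/2}$ (and not a higher power), so that after squaring the right-hand side is still integrable in time thanks to \eqref{int}.
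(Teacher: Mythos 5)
Your argument is correct and follows essentially the same route as the paper: the differential inequality \eqref{dphi3}, the time-averaged bounds obtained from \eqref{int} together with Lemma \ref{eqes} (squared, so that $\|\nabla\Delta\phi\|^2\leq C\|\delta E/\delta\phi\|+C$ is integrable in time), the uniform Gronwall lemma for $t>0$, and a classical Gronwall estimate on $[0,1]$ when $\phi_0\in H^3_p$. Your use of the shrinking interval $[t/2,t]$ is just a rephrasing of the paper's application of the uniform Gronwall lemma with parameter $r$ (yielding $\|\nabla\Delta\phi(t+r)\|^2\leq C(1+1/r)$), so there is no substantive difference.
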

  \begin{proof}
 We infer from Proposition \ref{low} and Lemma \ref{eqes} that for any
 $r>0$ and $t\geq 0$,
 \bea
  && \sup_{t\geq 0} \int_t^{t+r} \|\nabla \Delta
 \phi(\tau)\|^2d\tau \leq \sup_{t\geq 0} C\int_t^{t+r}\left\|\dfrac{\delta E}{\delta\phi}(\tau)\right\|^2
 d\tau+Cr\non\\
 &\leq& C \int_0^{+\infty}\left\|\dfrac{\delta E}{\delta\phi}(\tau)\right\|^2d\tau+Cr\leq C(1+r),\label{inter1}
 \eea
 \be
 \sup_{t\geq 0} \int_t^{t+r} \|\nabla u(\tau)\|^2d\tau\leq \int_0^{+\infty} \|\nabla u(\tau)\|^2d\tau\leq C.\label{inter2}
 \ee
 Then it follows from \eqref{dphi3} and the uniform Gronwall lemma \cite[Lemma III.1.1]{Te97} that
 \be
 \|\nabla \Delta \phi(t+r)\|^2\leq C\left(1+\frac{1}{r}\right), \quad \forall\, t\geq 0,\  r>0,\label{uge}
 \ee
 which yields \eqref{pos-t-regu}. The estimate for $\|\nabla \phi(t)\|_{\mathbf{L}^\infty}$ follows from the continuous embedding $H^2\hookrightarrow L^\infty$ ($n=3$).

If we further assume that $\phi_0\in H^3_p$, then by the
 standard Gronwall inequality, we see that $ \|\nabla \Delta \phi(t)\|$ is also bounded for $t\in [0,1]$. This combined with \eqref{uge} yields our conclusion.  The proof is complete.
  \end{proof}
 \br \label{independmu}
  We remark that the generic constant
$C$ throughout the proof of Lemma \ref{h3phase} does not depend on
the viscosity $\mu$, thus the uniform bounds for  $\|\phi\|_{H^3}$
obtained in Proposition \ref{comp} is independent of $\mu$.
 \er

Define
 \be
\mathcal{A}(t)=\|\nabla u\|^2(t)+\eta\Big\|\dfrac{\delta
E}{\delta\phi}\Big\|^2(t),  \label{A}
 \ee
where $\eta>0$ is a proper constant to be determined later, which
might depend on $\|u_0\|$, $\|\phi_0\|_{H^3}$ and coefficients of
the system.
\begin{lemma} \label{highorder1}
Let $n=3$.  For any smooth solution to the problem
\eqref{NS}--\eqref{IC}, if
 \be
 \|\phi(t)\|_{H^3}+\|\nabla \phi(t)\|_{\mathbf{L}^\infty}\leq K, \quad \forall\, t\geq 0,\label{K}
 \ee
 then for
  \be
 \eta=\frac{\mu\gamma}{16k\varepsilon K^2}, \label{eta}
 \ee
  the following higher-order energy inequality holds:
 \be
\dfrac{d }{dt}\mathcal{A}(t) +\mu\|\Delta u\|^2+
k\gamma\varepsilon\eta\Big\|\Delta\dfrac{\delta E}{\delta\phi}
\Big\|^2\leq C_\ast(\mathcal{A}^3(t)+\mathcal{A}(t)), \label{higha}
 \ee
 where $C_*$ is a constant depending on
  $\|u_0\|$, $\|\phi_0\|_{H^2}$, $K$ and coefficients of the system.
\end{lemma}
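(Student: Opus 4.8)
The plan is to test the momentum equation \eqref{NS} with $S u = -\Delta u$ and the relaxed transport equation \eqref{phasefield} with $\Delta^2 \bigl(\tfrac{\delta E}{\delta\phi}\bigr)$ (after applying $\tfrac{\delta E}{\delta\phi}$-type manipulations), then combine the two resulting identities with the weight $\eta$ so that the worst cross terms cancel. Concretely, multiplying \eqref{NS} by $-\Delta u$ and integrating over $Q$ gives, using $\nabla\cdot u = 0$ and the periodic boundary conditions,
\[
\frac12\frac{d}{dt}\|\nabla u\|^2 + \mu\|\Delta u\|^2 = \bigl(u\cdot\nabla u,\Delta u\bigr) - \Bigl(\frac{\delta E}{\delta\phi}\nabla\phi,\Delta u\Bigr).
\]
The inertial term is the standard Navier--Stokes obstruction: $|(u\cdot\nabla u,\Delta u)| \le C\|u\|_{L^6}\|\nabla u\|_{L^3}\|\Delta u\| \le \frac{\mu}{8}\|\Delta u\|^2 + C\|\nabla u\|^6$, which contributes an $\mathcal A^3$ term. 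The forcing term is where the coupling bites: I would write $\|\tfrac{\delta E}{\delta\phi}\nabla\phi\| \le \|\nabla\phi\|_{\mathbf L^\infty}\|\tfrac{\delta E}{\delta\phi}\| \le K\|\tfrac{\delta E}{\delta\phi}\|$ using hypothesis \eqref{K}, so that $|(\tfrac{\delta E}{\delta\phi}\nabla\phi,\Delta u)| \le \frac{\mu}{8}\|\Delta u\|^2 + C K^2\|\tfrac{\delta E}{\delta\phi}\|^2$. This last term is not controlled by the dissipation on the $u$-side alone — it must be absorbed by the dissipation coming from the $\phi$-equation, which is exactly why $\eta$ is chosen small (proportional to $1/K^2$).

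Next I would differentiate $\tfrac\eta2\|\tfrac{\delta E}{\delta\phi}\|^2$. From \eqref{phasefield}, $\tfrac{d}{dt}\tfrac{\delta E}{\delta\phi}$ satisfies (formally) an equation driven by $-u\cdot\nabla\phi - \gamma\tfrac{\delta E}{\delta\phi}$ acted on by the linearized operator $\tfrac{\delta^2 E}{\delta\phi^2}$; testing against $\tfrac{\delta E}{\delta\phi}$ and extracting the leading fourth-order part $k\varepsilon\Delta^2$ of $\tfrac{\delta E}{\delta\phi}$ (as in Lemma \ref{eqes}) produces the good dissipative term $-k\gamma\varepsilon\|\Delta\tfrac{\delta E}{\delta\phi}\|^2$, up to lower-order terms. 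The transport contribution $(\tfrac{\delta^2 E}{\delta\phi^2}[u\cdot\nabla\phi],\tfrac{\delta E}{\delta\phi})$ must be integrated by parts to move derivatives off $u$; using \eqref{K} and Proposition \ref{comp} to bound all $\phi$-quantities by $K$, and Gagliardo--Nirenberg/Sobolev interpolation, this generates terms like $C\|\nabla u\|\,\|\Delta\tfrac{\delta E}{\delta\phi}\|\,(\|\tfrac{\delta E}{\delta\phi}\|+1)$ plus $C(\|\nabla u\|^2+1)$; Young's inequality splits these into $\tfrac{k\gamma\varepsilon}{2}\|\Delta\tfrac{\delta E}{\delta\phi}\|^2 + C(\mathcal A^3 + \mathcal A + 1)$. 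One also has to bound $\|\Delta^2\phi\|$, $\|\nabla\Delta\phi\|$ etc. in terms of $\|\tfrac{\delta E}{\delta\phi}\|$ via Lemma \ref{eqes}, which is what keeps everything expressible through $\mathcal A$.

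Finally I would add the two inequalities with weight $\eta$ and choose $\eta = \tfrac{\mu\gamma}{16k\varepsilon K^2}$ precisely so that the bad term $\eta C K^2\|\tfrac{\delta E}{\delta\phi}\|^2$ coming from the forcing in the $u$-equation is dominated by a fraction of $k\gamma\varepsilon\eta\|\Delta\tfrac{\delta E}{\delta\phi}\|^2$ after using the Poincaré-type inequality $\|\tfrac{\delta E}{\delta\phi}\|^2 \le \|\Delta\tfrac{\delta E}{\delta\phi}\|\,\|\tfrac{\delta E}{\delta\phi}\|$ together with \eqref{zero} (the $\tfrac{\delta E}{\delta\phi}$ has nonzero mean in general, so one uses instead that the lower-order part of $\tfrac{\delta E}{\delta\phi}$ is already controlled and only the $\Delta^2\phi$ part needs the gradient bound — alternatively absorb via $\|\nabla\tfrac{\delta E}{\delta\phi}\|^2 \le \|\Delta\tfrac{\delta E}{\delta\phi}\|\|\tfrac{\delta E}{\delta\phi}\|$ on zero-mean $\nabla\tfrac{\delta E}{\delta\phi}$). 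The remaining retained dissipation $\mu\|\Delta u\|^2 + k\gamma\varepsilon\eta\|\Delta\tfrac{\delta E}{\delta\phi}\|^2$ survives with positive coefficients, and everything else is bounded by $C_\ast(\mathcal A^3 + \mathcal A)$, giving \eqref{higha}. The main obstacle is the careful bookkeeping in the $\phi$-equation step: identifying the leading dissipative term after differentiating $\|\tfrac{\delta E}{\delta\phi}\|^2$ in time, and controlling the transport term $(\tfrac{\delta^2 E}{\delta\phi^2}[u\cdot\nabla\phi],\tfrac{\delta E}{\delta\phi})$ — which involves up to fourth-order derivatives of $\phi$ paired against $u$ — so that its $u$-dependence enters only through $\|\nabla u\|$ (hence $\mathcal A$) and never through $\|\Delta u\|$ with too large a constant; the uniform $H^3$-bound \eqref{K} on $\phi$ from Proposition \ref{comp} is precisely what makes this possible.
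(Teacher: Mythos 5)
Your overall skeleton (test \eqref{NS} with $-\Delta u$, differentiate $\|\frac{\delta E}{\delta\phi}\|^2$ in time using \eqref{phasefield}, add with weight $\eta$) is the same as the paper's, and the velocity-side estimates are fine. The genuine gap is in your treatment of the transport contribution on the $\phi$-side and, as a consequence, in your explanation of why $\eta$ has the value \eqref{eta}. The leading term in $\frac{d}{dt}\frac{\delta E}{\delta\phi}$ is $k\varepsilon\Delta^2\phi_t$; pairing with $\frac{\delta E}{\delta\phi}$ and substituting $\phi_t=-u\cdot\nabla\phi-\gamma\frac{\delta E}{\delta\phi}$ gives $-k\varepsilon\gamma\|\Delta\frac{\delta E}{\delta\phi}\|^2-k\varepsilon(\Delta(u\cdot\nabla\phi),\Delta\frac{\delta E}{\delta\phi})$, and the piece $\Delta u\cdot\nabla\phi$ inside $\Delta(u\cdot\nabla\phi)$ unavoidably produces, after Young's inequality against the $\phi$-dissipation, a term of size $\frac{Ck\varepsilon K^2}{\gamma}\|\Delta u\|^2$ (the paper's $J_1$ estimate yields $\frac{4k\varepsilon K^2}{\gamma}\|\Delta u\|^2+C\|\nabla u\|^2$). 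This term is at the level of the velocity dissipation and cannot be rewritten through $\|\nabla u\|$: integrating by parts to take a derivative off $\Delta u$ would place it on $\Delta\frac{\delta E}{\delta\phi}$, producing $\|\nabla\Delta\frac{\delta E}{\delta\phi}\|$, which the dissipation retained in \eqref{higha} does not control. So your claim that the transport term generates only expressions like $C\|\nabla u\|\,\|\Delta\frac{\delta E}{\delta\phi}\|(\|\frac{\delta E}{\delta\phi}\|+1)$, hence only $C(\mathcal{A}^3+\mathcal{A}+1)$ after splitting, is not achievable; the whole point of the weight \eqref{eta} is that $\eta\cdot\frac{4k\varepsilon K^2}{\gamma}=\frac{\mu}{4}$, so this $\|\Delta u\|^2$ term is swallowed by the momentum dissipation $\mu\|\Delta u\|^2$ when the two inequalities are added (cf.\ \eqref{dtEphi1} multiplied by $\eta$ and combined with \eqref{duu}).

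Relatedly, the mechanism you give for the choice of $\eta$ addresses the wrong term. The forcing contribution in the momentum equation is bounded as in \eqref{term 2} by $\frac{\mu}{8}\|\Delta u\|^2+\frac{2K^2}{\mu}\|\frac{\delta E}{\delta\phi}\|^2$, and the second piece needs no absorption by $k\gamma\varepsilon\eta\|\Delta\frac{\delta E}{\delta\phi}\|^2$ at all: since $\eta$ is a fixed constant and $C_*$ is allowed to depend on $K$, it is simply $\le \frac{2K^2}{\mu\eta}\,\eta\|\frac{\delta E}{\delta\phi}\|^2\le C_*\mathcal{A}(t)$. Your proposed absorption via $\|\frac{\delta E}{\delta\phi}\|^2\le\|\Delta\frac{\delta E}{\delta\phi}\|\,\|\frac{\delta E}{\delta\phi}\|$ is moreover not valid as stated (the mean of $\frac{\delta E}{\delta\phi}$ does not vanish, as you half-acknowledge), and it is unnecessary. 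Finally, a smaller point: the extra ``$+1$'' in your right-hand side weakens \eqref{higha}, whose stated form $C_*(\mathcal{A}^3+\mathcal{A})$ is what is used later (e.g.\ in Proposition \ref{small data proposition} together with $\mathcal{A}\in L^1(0,\infty)$); the paper avoids constant terms by bounding $\|\phi_t\|\le C(\|\nabla u\|+\|\frac{\delta E}{\delta\phi}\|)$ so that every lower-order remainder is controlled by $C\mathcal{A}$.
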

\begin{proof}
By equation \eqref{NS} and the periodic boundary conditions
\eqref{BC}, we see that
 \be \frac12 \frac{d}{dt}\|\nabla u\|^2=-(u_t, \Delta
u)=-\mu\|\Delta u\|^2+(u\cdot\nabla u, \Delta u)-\Big(\dfrac{\delta
E}{\delta\phi}\nabla\phi,\Delta u \Big)\label{duu}
 \ee
 Using the uniform estimates \eqref{unilow} and \eqref{K}, the right-hand side of \eqref{duu} can be estimated as follows
  \bea (u\cdot\nabla u, \Delta u) &\leq& \frac{\mu}{16}\|\Delta
u\|^2+C\|u\cdot\nabla u\|^2\non\\
& \leq & \frac{\mu}{16}\|\Delta
u\|^2+C\|u\|_{\mathbf{L}^{\infty}}^2\|\nabla u\|^2 \non\\
&\leq& \frac{\mu}{16}\|\Delta u\|^2+C(\|\nabla u\|\|\Delta
u\|+\|\nabla u\|^2)\|\nabla u\|^2
\non\\
&\leq& \frac{\mu}{8}\|\Delta u\|^2+C(\|\nabla u\|^6+\|\nabla u\|^2),
\label{term 1}
 \eea
 \bea
-\Big( \dfrac{\delta E}{\delta\phi}\nabla\phi, \Delta u\Big) &\leq&
\frac{\mu}{8}\|\Delta u\|^2+\frac{2}{\mu}\Big\|\dfrac{\delta
E}{\delta\phi}\Big\|^2\|\nabla\phi\|_{\mathbf{L}^\infty}^2 \leq
\frac{\mu}{8}\|\Delta u\|^2+C\Big\|\dfrac{\delta
E}{\delta\phi}\Big\|^2. \label{term 2}
 \eea
On the other hand, using integration by parts, we obtain from
\eqref{phasefield} that
 \bea && \frac12\dfrac{d}{dt}\Big\|\dfrac{\delta
E}{\delta\phi}\Big\|^2=\Big(\dfrac{\delta E}{\delta\phi},
\frac{\partial}{\partial
t}\dfrac{\delta E}{\delta\phi}\Big) \non\\
&=&k\varepsilon\Big(\Delta^2\phi_t, \dfrac{\delta
E}{\delta\phi}\Big) -\frac{k}{\varepsilon}\left( \partial_t[\Delta
(\phi^3-\phi)], \dfrac{\delta E}{\delta\phi}\right)
+\frac{6k}{\varepsilon^2}\left(\phi f(\phi)\phi_t,\dfrac{\delta
E}{\delta\phi}\right)
\non\\
&& +\frac{k}{\varepsilon^2}\left((3\phi^2-1)\partial_t
f(\phi),\dfrac{\delta
E}{\delta\phi}\right)+M_1\frac{d}{dt}A(\phi)\int_Q \dfrac{\delta
E}{\delta\phi} dx
\non\\
&& + M_2\frac{d}{dt}B(\phi)\left(f(\phi), \dfrac{\delta
E}{\delta\phi}\right)+ M_2(B(\phi)-\beta)\left(\partial_t f(\phi),
\dfrac{\delta E}{\delta\phi}\right)
 \non\\
&=& \sum_{i=1}^7 J_i. \label{dtEphi}
 \eea
  The first term $J_1$ can be estimated as follows
\bea J_1&=&-k\varepsilon\gamma \left\|\Delta \dfrac{\delta
E}{\delta\phi}\right\|^2-k\varepsilon\left(\Delta(u\cdot\nabla
\phi), \Delta \dfrac{\delta E}{\delta\phi}\right)
\non\\
&\leq& -\frac{7 k\varepsilon\gamma}{8}\Big\|\Delta\dfrac{\delta
E}{\delta\phi}\Big\|^2+\frac{2k\varepsilon}{\gamma}\big\|\Delta(u\cdot\nabla\phi)\big\|^2
\non\\
&\leq& -\frac{7 k\varepsilon\gamma}{8}\Big\|\Delta\dfrac{\delta
E}{\delta\phi}\Big\|^2+\frac{2k\varepsilon}{\gamma}\big(\|\Delta
u\cdot\nabla\phi\big\|^2+2\|\nabla u\cdot\nabla^2\phi\|^2+\|
u\cdot\nabla\Delta\phi\|^2 \big)   \non\\
&\leq& -\frac{7 k\varepsilon\gamma}{8}\Big\|\Delta\dfrac{\delta
E}{\delta\phi}\Big\|^2+\frac{2k\varepsilon}{\gamma}\big(\|\Delta
u\|^2\|\nabla\phi\|_{\mathbf{L}^\infty}^2+2\|\nabla
u\|^2_{\mathbf{L}^3}\|\phi\|^2_{W^{2,6}}+\|u\|_{\mathbf{L}^\infty}^2\|\nabla\Delta\phi\|^2 \big) \non\\
&\leq& -\frac{7 k\varepsilon\gamma}{8}\Big\|\Delta\dfrac{\delta
E}{\delta\phi}\Big\|^2+\frac{2k\varepsilon K^2}{\gamma}\|\Delta
u\|^2+CK^2\|\Delta u\|\|\nabla u\|\non\\
&\leq& -\frac{7 k\varepsilon\gamma}{8}\Big\|\Delta\dfrac{\delta
E}{\delta\phi}\Big\|^2+\frac{4k\varepsilon K^2}{\gamma}\|\Delta
u\|^2+C\|\nabla u\|^2. \label{term3}
 \eea
Then for $J_2, J_3, J_4$, a direct computation yields that
 \bea
  && J_2+J_3+J_4\non\\
  &=& -\frac{6k}{\varepsilon}\left( \partial_t(|\nabla \phi|^2\phi), \dfrac{\delta E}{\delta\phi}\right)-\frac{3k}{\varepsilon}\left(\partial_t(\phi^2\Delta \phi), \dfrac{\delta E}{\delta\phi}\right)+ \frac{k}{\varepsilon}\left(\Delta \phi_t, \dfrac{\delta E}{\delta\phi}\right)\non\\
 && -\frac{6k}{\varepsilon}\left(\phi\Delta \phi \phi_t, \dfrac{\delta E}{\delta\phi}\right)+\frac{6k}{\varepsilon^3}\left((\phi^4-\phi^2)\phi_t, \dfrac{\delta E}{\delta\phi}\right)\non\\
  &&-\frac{k}{\varepsilon}\left((3\phi^2-1)\Delta \phi_t, \dfrac{\delta E}{\delta\phi}\right)+\frac{k}{\varepsilon^3}\left((3\phi^2-1)^2\phi_t, \dfrac{\delta E}{\delta\phi}\right)
\non\\
  &=&-\frac{6k}{\varepsilon}\left( |\nabla \phi|^2\phi_t, \dfrac{\delta E}{\delta\phi}\right)-\frac{12k}{\varepsilon}\left(\phi\phi_t\Delta \phi, \dfrac{\delta E}{\delta\phi}\right)\non\\
  &&
   +\frac{k}{\varepsilon}\left( [2(1-3\phi^2)\Delta  \phi_t-6\nabla \phi^2\cdot \nabla \phi_t],  \dfrac{\delta E}{\delta\phi}\right)\non\\
   && +\frac{k}{\varepsilon^3}\left((15\phi^4-12\phi^2+1)\phi_t, \dfrac{\delta E}{\delta\phi}\right)\non\\
  &:=& J_{2a}+J_{2b}+J_{2c}+J_{2d}.\non
 \eea
 Then we have
\bea J_{2a} &\leq&
C\|\nabla\phi\|_{\mathbf{L}^\infty}^2\|\phi_t\|\Big\|\dfrac{\delta
E}{\delta\phi}\Big\|\leq C\|u\cdot\nabla\phi\|\Big\|\dfrac{\delta
E}{\delta\phi}\Big\|+C\Big\|\dfrac{\delta
E}{\delta\phi}\Big\|^2 \non\\
&\leq&  C\Big\|\dfrac{\delta
E}{\delta\phi}\Big\|^2+C\|u\|_{\mathbf{L}^6}^2\|\nabla\phi\|_{\mathbf{L}^3}^2\non\\
&\leq& C\Big\|\dfrac{\delta E}{\delta\phi}\Big\|^2+C\|\nabla u\|^2,
\non
 \eea
 \bea
 J_{2b} &\leq& C\|\phi\|_{L^\infty}\|\phi_t\|\|\Delta\phi\|_{L^6}\Big\|\dfrac{\delta
E}{\delta\phi}\Big\|_{L^3} \non\\
&\leq& C\left(\|u\|_{\mathbf{L}^6}\|\nabla
\phi\|_{\mathbf{L}^3}+\Big\|\dfrac{\delta
E}{\delta\phi}\Big\|\right)\left(\Big\|\Delta\dfrac{\delta
E}{\delta\phi}\Big\|^{\frac14}\Big\|\dfrac{\delta
E}{\delta\phi}\Big\|^{\frac34}+\Big\|\dfrac{\delta
E}{\delta\phi}\Big\|\right)  \non\\
&\leq& \frac{ k\varepsilon\gamma}{16}\Big\|\Delta \dfrac{\delta
E}{\delta\phi}\Big\|^2+C\Big\|\dfrac{\delta
E}{\delta\phi}\Big\|^2+C\|\nabla u\|^2,\non
 \eea
\bea
 J_{2c}
 &=& \frac{2k}{\varepsilon}\left((3\phi^2-1)\nabla  \phi_t, \nabla \dfrac{\delta E}{\delta\phi}\right)\non\\
 &\leq& C(\|\phi\|_{L^\infty}^2+1)\|\nabla\phi_t\|\Big\|\nabla\dfrac{\delta
E}{\delta\phi}\Big\| \leq C\Big\|\nabla\dfrac{\delta
E}{\delta\phi}\Big\|^2+C\|\nabla(u\cdot\nabla\phi)\|^2 \non\\
&\leq& C\left(\Big\|\Delta\dfrac{\delta
E}{\delta\phi}\Big\|\Big\|\dfrac{\delta
E}{\delta\phi}\Big\|+\Big\|\dfrac{\delta
E}{\delta\phi}\Big\|^2\right)+C\|\nabla u\|^2\|\nabla
\phi\|_{\mathbf{L}^\infty}^2+C\|u\|_{\mathbf{L}^6}^2\|\phi\|^2_{W^{2,3}}
\non\\
&\leq&\frac{ k\varepsilon\gamma}{16}\Big\|\Delta \dfrac{\delta
E}{\delta\phi}\Big\|^2+C\Big\|\dfrac{\delta
E}{\delta\phi}\Big\|^2+C\|\nabla u\|^2,\non
 \eea
 \bea
 J_{2d}&\leq& C(\|\phi\|_{L^\infty}^4+1)\|\phi_t\|\left\| \dfrac{\delta E}{\delta\phi}\right\|\leq C\Big\|\dfrac{\delta
E}{\delta\phi}\Big\|^2+C\|\nabla u\|^2.\non
 \eea
 Hence, we obtain
 \be
 J_2+J_3+J_4\leq \frac{ k\varepsilon\gamma}{8}\Big\|\Delta \dfrac{\delta
E}{\delta\phi}\Big\|^2+C\Big\|\dfrac{\delta
E}{\delta\phi}\Big\|^2+C\|\nabla u\|^2.\non
 \ee
 For the remaining terms, we have
 \be
 J_5 \leq C\|\phi_t\|\Big\|\dfrac{\delta E}{\delta\phi}\Big\|
\leq C\Big\|\dfrac{\delta E}{\delta\phi}\Big\|^2+C\|\nabla
u\|^2,\non
 \ee
 \bea
  J_6 &\leq& C(\|\nabla\phi\|\|\nabla\phi_t\|+\|\phi^3-\phi\|\|\phi_t\|)\|f(\phi)\|\left\| \dfrac{\delta E}{\delta\phi}\right\|
\non\\
&\leq& C(\|\phi_t\|+\|\nabla\phi_t\|)\Big\|\dfrac{\delta
E}{\delta\phi}\Big\| \non\\
&\leq& \frac{ k\varepsilon\gamma}{8}\Big\|\Delta \dfrac{\delta
E}{\delta\phi}\Big\|^2+C\Big\|\dfrac{\delta
E}{\delta\phi}\Big\|^2+C\|\nabla u\|^2, \non
 \eea
 \bea
 J_7 &=& -M_2\varepsilon(B(\phi)-\beta)\Big(\Delta\phi_t, \dfrac{\delta
E}{\delta\phi}\Big)+\frac{M_2}{\varepsilon}(B(\phi)-\beta)\Big((3\phi^2-1)\phi_t,
\dfrac{\delta E}{\delta\phi}\Big)   \non\\
&=& -M_2\varepsilon(B(\phi)-\beta)\Big(\phi_t, \Delta\dfrac{\delta
E}{\delta\phi}\Big)+\frac{M_2}{\varepsilon}(B(\phi)-\beta)\Big((3\phi^2-1)\phi_t,
\dfrac{\delta E}{\delta\phi}\Big)  \non\\
&\leq& C\|\phi_t\|\Big\|\Delta\dfrac{\delta
E}{\delta\phi}\Big\|+C\|\phi_t\|\Big\|\dfrac{\delta
E}{\delta\phi}\Big\|  \non\\
&\leq& \frac{ k\varepsilon\gamma}{8}\Big\|\Delta \dfrac{\delta
E}{\delta\phi}\Big\|^2+C\Big\|\dfrac{\delta
E}{\delta\phi}\Big\|^2+C\|\nabla u\|^2. \non
 \eea
Collecting  the above estimates, we deduce that
 \be  \frac12\dfrac{d}{dt}\Big\|\dfrac{\delta
E}{\delta\phi}\Big\|^2+ \frac{ k\varepsilon\gamma}{2}\Big\|\Delta
\dfrac{\delta E}{\delta\phi}\Big\|^2\leq \frac{4k\varepsilon
K^2}{\gamma}\|\Delta u\|^2+C\Big\|\dfrac{\delta
E}{\delta\phi}\Big\|^2+C\|\nabla u\|^2. \label{dtEphi1}
 \ee
 Now multiplying \eqref{dtEphi1} by $
 \eta=\dfrac{\mu\gamma}{16k\varepsilon K^2}$ and adding the result to \eqref{duu}, we obtain \eqref{higha}. The proof is complete.
\end{proof}

\begin{theorem}[Local strong solution]\label{locstr}
Let $n=3$. For any initial datum $(u_0, \phi_0)\in \dot{\mathbf{V}}
\times H^4_p(Q)$, there exists $T_0\in (0,+\infty)$ such that the
problem \eqref{NS}--\eqref{IC} admits a unique strong solution $(u,
\phi)$ satisfying
 \bea
  &&u \in L^{\infty}(0, T_0; \dot{\mathbf{V}} ) \cap L^2(0, T_0;
 \mathbf{H}^2); \\
&&\phi \in L^\infty(0, T_0; H_p^4)\cap L^2(0, T_0; H_p^6)\cap H^1(0,
T_0; H^2_p).
 \eea
\end{theorem}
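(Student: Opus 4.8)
The plan is to build the local strong solution by a Faedo--Galerkin scheme for which the formal computations of Lemma~\ref{highorder1} and Proposition~\ref{comp} are rigorous, and then to close those estimates on a short time interval by comparison with a scalar ODE. \textbf{Step 1 (preliminaries and initial bounds).} Since $\phi_0\in H^4_p\subset H^3_p$, estimate \eqref{phih3} of Proposition~\ref{comp} applies and gives a constant $K>0$, depending only on $\|u_0\|$, $\|\phi_0\|_{H^3}$ and the coefficients, such that $\|\phi(t)\|_{H^3}+\|\nabla\phi(t)\|_{\mathbf L^\infty}\le K$ for all $t\ge 0$; hence hypothesis \eqref{K} of Lemma~\ref{highorder1} holds and we fix $\eta=\frac{\mu\gamma}{16k\varepsilon K^2}$ as in \eqref{eta}. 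Moreover $\mathcal A(0)=\|\nabla u_0\|^2+\eta\|\frac{\delta E}{\delta\phi}(\phi_0)\|^2<\infty$: $u_0\in\dot{\mathbf V}$ gives $\nabla u_0\in\mathbf L^2_p$, and writing $\frac{\delta E}{\delta\phi}=k\varepsilon\Delta^2\phi+H(\phi)$ with $H$ of differential order at most two (cf. the proof of Lemma~\ref{eqes}), the assumption $\phi_0\in H^4_p$ gives $\frac{\delta E}{\delta\phi}(\phi_0)\in L^2_p$.

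\textbf{Step 2 (short-time closure and regularity).} With \eqref{K} in force, Lemma~\ref{highorder1} gives the higher-order inequality \eqref{higha}. Comparing $\mathcal A(t)$ with the solution of the scalar ODE $z'=C_\ast(z^3+z)$, $z(0)=\mathcal A(0)$, which stays finite on a maximal interval $[0,T^\ast)$ with $T^\ast>0$ depending only on $\mathcal A(0)$ and $C_\ast$, we obtain $T_0\in(0,T^\ast)$ (hence $T_0$ depends on the data) and $M>0$ with $\sup_{t\in[0,T_0]}\mathcal A(t)\le M$; integrating \eqref{higha} on $[0,T_0]$ also yields $\int_0^{T_0}\big(\mu\|\Delta u\|^2+k\gamma\varepsilon\eta\|\Delta\frac{\delta E}{\delta\phi}\|^2\big)\,dt\le M+C_\ast(M^3+M)T_0$. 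By the definition \eqref{A} of $\mathcal A$ and the Poincar\'e--Wirtinger inequality (recall $\int_Q u_0\,dx=0$) this gives $u\in L^\infty(0,T_0;\dot{\mathbf V})\cap L^2(0,T_0;\mathbf H^2)$; together with the elliptic estimate \eqref{dh2} applied to $\frac{\delta E}{\delta\phi}$ and the bound $\mathcal A\le M$ it gives $\frac{\delta E}{\delta\phi}\in L^\infty(0,T_0;L^2_p)\cap L^2(0,T_0;H^2_p)$. Feeding this into Lemma~\ref{eqes} bounds $\phi$ in $L^\infty(0,T_0;H^4_p)\cap L^2(0,T_0;H^5_p)$, and applying $\Delta$ once more to the identity $\frac{\delta E}{\delta\phi}=k\varepsilon\Delta^2\phi+H(\phi)$, using the $L^\infty(0,T_0;H^4_p)$ bound just obtained to control $\Delta H(\phi)$, upgrades this to $\phi\in L^\infty(0,T_0;H^4_p)\cap L^2(0,T_0;H^6_p)$. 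Finally $\phi_t=-u\cdot\nabla\phi-\gamma\frac{\delta E}{\delta\phi}$, together with the fact that $H^2_p$ is a Banach algebra in dimension three (so $u\cdot\nabla\phi\in L^2(0,T_0;\mathbf H^2)$), yields $\phi_t\in L^2(0,T_0;H^2_p)$, which is exactly the asserted regularity.

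\textbf{Step 3 (rigorous construction).} One runs a Faedo--Galerkin approximation using the eigenfunctions of the Stokes operator $S$ for the velocity and those of $-\Delta$ on $L^2_p$ for the phase function. All the computations of Steps~1--2, as well as those of Proposition~\ref{comp} and Lemma~\ref{highorder1}, are legitimate for the approximate solutions, so the bounds above hold uniformly in the Galerkin index on $[0,T_0]$. A standard Aubin--Lions compactness argument then produces a limit $(u,\phi)$ in the above class, and this regularity is more than enough to identify every nonlinear term in the limit; the limit is the desired strong solution on $[0,T_0]$.

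\textbf{Step 4 (uniqueness), the main obstacle.} Let $(u_1,\phi_1),(u_2,\phi_2)$ be two strong solutions on $[0,T_0]$ with the same data; set $\bar u=u_1-u_2$, $\bar\phi=\phi_1-\phi_2$ and $w=\frac{\delta E}{\delta\phi}(\phi_1)-\frac{\delta E}{\delta\phi}(\phi_2)=k\varepsilon\Delta^2\bar\phi+\big(H(\phi_1)-H(\phi_2)\big)$. Testing the difference of \eqref{NS} with $\bar u$ and the difference of \eqref{phasefield} with $w$, and adding, produces a differential inequality for $\|\bar u\|^2+k\varepsilon\|\Delta\bar\phi\|^2$ with dissipation $\mu\|\nabla\bar u\|^2+\gamma\|w\|^2$ on the left. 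The delicate term, and the hard part, is the force-term difference $\big(\frac{\delta E}{\delta\phi}(\phi_1)\nabla\phi_1-\frac{\delta E}{\delta\phi}(\phi_2)\nabla\phi_2,\bar u\big)=(w\nabla\phi_1,\bar u)+\big(\frac{\delta E}{\delta\phi}(\phi_2)\nabla\bar\phi,\bar u\big)$: the highest-order piece $(w\nabla\phi_1,\bar u)$ combines with the transport contribution $(\bar u\cdot\nabla\phi_2,w)$ arising from testing the phase-field difference with $w$ into $(w\nabla\bar\phi,\bar u)$, which is quadratically small in the difference and is absorbed by $\gamma\|w\|^2+\mu\|\nabla\bar u\|^2$ --- this structural cancellation is precisely why one tests with $w$. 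Every remaining term --- $\big(\frac{\delta E}{\delta\phi}(\phi_2)\nabla\bar\phi,\bar u\big)$, $(u_1\cdot\nabla\bar\phi,w)$, $(\bar\phi_t,H(\phi_1)-H(\phi_2))$, $(\bar u\cdot\nabla u_2,\bar u)$ --- is bounded, via three-dimensional Sobolev embeddings, by a small multiple of the dissipation plus $\big(\|\bar u\|^2+\|\bar\phi\|_{H^2}^2\big)$ times a coefficient in $L^1(0,T_0)$, using $u_i\in L^\infty(0,T_0;\dot{\mathbf V})\cap L^2(0,T_0;\mathbf H^2)$, $\frac{\delta E}{\delta\phi}(\phi_i)\in L^2(0,T_0;H^2_p)$ and $\phi_i\in L^\infty(0,T_0;H^4_p)$; since $\int_Q\bar\phi\,dx$ vanishes at $t=0$ and $\frac{d}{dt}\int_Q\bar\phi\,dx=-\gamma\int_Q w\,dx$, one has $\|\bar\phi\|_{H^2}^2\le C\|\Delta\bar\phi\|^2+C\int_0^t\|w\|^2\,ds$. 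Gronwall's lemma then forces $\bar u\equiv 0$ and $\bar\phi\equiv 0$ on $[0,T_0]$, completing the proof.
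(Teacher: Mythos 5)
Your proposal is correct, and the existence part is essentially the paper's argument: Proposition \ref{comp} supplies the uniform $H^3$ bound \eqref{K}, Lemma \ref{highorder1} then gives \eqref{higha}, an ODE comparison for $\mathcal{A}(t)$ yields a local time $T_0$ with $\mathcal{A}$ bounded, and the Galerkin scheme plus the lower-order estimates produce the strong solution; your Step 2 merely spells out (correctly) how the bounds on $\|\nabla u\|$, $\|\frac{\delta E}{\delta\phi}\|$, $\int_0^{T_0}\|\Delta u\|^2$, $\int_0^{T_0}\|\Delta\frac{\delta E}{\delta\phi}\|^2$ translate, via Lemma \ref{eqes} and the identity $\frac{\delta E}{\delta\phi}=k\varepsilon\Delta^2\phi+H(\phi)$, into the stated classes $u\in L^\infty\dot{\mathbf V}\cap L^2\mathbf H^2$, $\phi\in L^\infty H^4_p\cap L^2 H^6_p\cap H^1 H^2_p$, a step the paper leaves implicit. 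Where you genuinely diverge is uniqueness: the paper disposes of it in one line by noting $u\in L^\infty(0,T_0;\dot{\mathbf V})\subset L^8(0,T_0;\mathbf L^4_p)$ and invoking the conditional uniqueness of weak solutions stated in Theorem \ref{weak}, whereas you reprove uniqueness from scratch by testing the difference equations with $\bar u$ and $w=\frac{\delta E}{\delta\phi}(\phi_1)-\frac{\delta E}{\delta\phi}(\phi_2)$ and running a Gronwall argument (with the mean of $\bar\phi$ recovered from $\frac{d}{dt}\int_Q\bar\phi\,dx=-\gamma\int_Q w\,dx$). Your route is longer but self-contained and does not lean on Theorem \ref{weak}; the paper's is shorter but delegates the work. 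One small correction to your Step 4: the combined term $(w\nabla\bar\phi,\bar u)$ is cubic, not quadratic, in the differences, so it is not absorbed "for free" by the dissipation; it closes because one factor is controlled by the a priori regularity, e.g. $|(w\nabla\bar\phi,\bar u)|\le \|w\|\,\|\nabla\bar\phi\|_{\mathbf L^\infty}\|\bar u\|\le C\|w\|\,\|\bar u\|$ using $\nabla\phi_i\in L^\infty(0,T_0;\mathbf L^\infty)$ — and for the same reason, in the strong-solution class the structural cancellation between $(w\nabla\phi_1,\bar u)$ and $(\bar u\cdot\nabla\phi_2,w)$ is convenient but not actually needed, since each term separately obeys the same bound. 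With that adjustment your difference estimate closes and the argument is sound.
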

\begin{proof}
It follows from Proposition \ref{comp} that the assumption \eqref{K}
in Lemma \ref{highorder1} is satisfied and $K$ is a constant
depending on
  $\|u_0\|$, $\|\phi_0\|_{H^3}$ and coefficients of the system. Consequently, \eqref{higha} holds with $\eta$ and $C_*$ depending on
  $\|u_0\|$, $\|\phi_0\|_{H^3}$ and coefficients of the system. A standard
argument in ODE theory yields that there exists a
$T_0=T_0(\mathcal{A}(0), C_*)\in (0, +\infty)$ such that
$\mathcal{A}(t)$ is bounded on $[0, T_0]$. The bound only depends on
$T_0$, $\mathcal{A}(0)$ and $C_*$. This fact together with the
lower-order estimates in Proposition \ref{low} and the Galerkin
scheme similar to that in \cite{DLL07} implies the
 existence of a local strong solution to the problem
\eqref{NS}--\eqref{IC} in the time interval $[0,T_0]$. Since $u\in
L^{\infty}(0, T_0; \dot{\mathbf{V}} )\subset L^8(0, T_0;
\mathbf{L}^4_p)$, uniqueness of the local strong solution follows
from Theorem \ref{weak}. The proof is complete.
\end{proof}

In general, we cannot expect existence of global strong solutions to
the problem \eqref{NS}--\eqref{IC} for arbitrary initial data in
$\dot{\mathbf{V}}\times H^4_p$, due to the difficulty from the $3D$ Navier--Stokes equations. However, if we
assume that the fluid viscosity $\mu$ is properly large, then problem
\eqref{NS}--\eqref{IC} will admit a unique global strong solution
that is uniformly bounded in $\mathbf{H}^1\times H^4$ on
$[0,+\infty)$. To verify this point, we first derive an alternative
higher-order differential inequality.

\begin{lemma} \label{highlargev}
\noindent Let $n=3$. For arbitrary $\mu_0>0$, if $\mu\geq \mu_0>0$,
and \eqref{K} is satisfied, then choosing the parameter $\eta$ in
$\mathcal{A}(t)$ to be
  \be
 \eta'=\frac{\mu_0\gamma}{16k\varepsilon K^2}, \label{eta1}
  \ee
the following inequality holds for the
 smooth solution $(u,\phi)$ to the problem
\eqref{NS}--\eqref{IC}
 \be \dfrac{d }{dt}\mathcal{A}(t)+\left(\mu-\mu^\frac12\mathcal{A}(t)\right)\|\Delta
u\|^2+k\varepsilon\gamma\eta'\Big\|\Delta\dfrac{\delta
E}{\delta\phi} \Big\|^2\leq C'\mathcal{A}(t),
 \label{3D case 1}
\ee where $C'$ is a constant depending on
  $\|u_0\|$, $\|\phi_0\|_{H^2}$, $K$, $\mu_0$ and coefficients of the system but except $\mu$.
\end{lemma}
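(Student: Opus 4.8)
The strategy is to repeat the two--part energy estimate of Lemma \ref{highorder1} with a single change: the convective term $(u\cdot\nabla u,\Delta u)$ in the momentum balance is kept in multiplicative form rather than split off by Young's inequality into the cubic source $C\|\nabla u\|^6$. Concretely, I would again start from the identity \eqref{duu} for $\tfrac12\tfrac{d}{dt}\|\nabla u\|^2$, retain the bound \eqref{term 2} for the coupling term $-(\tfrac{\delta E}{\delta\phi}\nabla\phi,\Delta u)$ (whose constant is $\le 2K^2/\mu_0$ under \eqref{K}, hence independent of $\mu$), and for the phase function simply invoke the inequality \eqref{dtEphi1}, which was already established in the proof of Lemma \ref{highorder1} under hypothesis \eqref{K} and with $\mu$--independent constants. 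Thus no re--estimation of $J_1,\dots,J_7$ is needed.

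The one term handled differently is $(u\cdot\nabla u,\Delta u)$. Using $\|u\|_{\mathbf{L}^\infty}^2\le C\|\nabla u\|\|\Delta u\|$ (Agmon's inequality together with the zero--mean Poincar\'e inequality for $u$, which also yields $\|\nabla u\|\le C\|\Delta u\|$),
\[
(u\cdot\nabla u,\Delta u)\le \|u\|_{\mathbf{L}^\infty}\|\nabla u\|\|\Delta u\|\le C\|\nabla u\|^{\frac12}\|\Delta u\|^{\frac12}\cdot\|\nabla u\|\|\Delta u\|\le C\|\nabla u\|\,\|\Delta u\|^2 .
\]
A Young inequality applied only to the coefficient then gives $C\|\nabla u\|\le \tfrac12\mu^{\frac12}\|\nabla u\|^2+C\mu^{-\frac12}\le \tfrac12\mu^{\frac12}\mathcal{A}(t)+C\mu^{-\frac12}$ (using $\|\nabla u\|^2\le\mathcal{A}(t)$), so that $(u\cdot\nabla u,\Delta u)\le\big(\tfrac12\mu^{\frac12}\mathcal{A}(t)+C\mu^{-\frac12}\big)\|\Delta u\|^2$. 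The quadratic nonlinearity in $\nabla u$ has now been absorbed into the coefficient of $\|\Delta u\|^2$, and the residual $C\mu^{-\frac12}\|\Delta u\|^2$ is small thanks to $\mu\ge\mu_0$.

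Next I would multiply \eqref{dtEphi1} by $2\eta'$ with $\eta'=\tfrac{\mu_0\gamma}{16k\varepsilon K^2}$ --- precisely the value that turns the $\|\Delta u\|^2$--contribution $\tfrac{4k\varepsilon K^2}{\gamma}\|\Delta u\|^2$ (coming from $J_1$) into $\tfrac{\mu_0}{2}\|\Delta u\|^2$ --- and add the result to twice the identity \eqref{duu}, so that the left--hand side becomes $\tfrac{d}{dt}\mathcal{A}(t)$. Collecting all $\|\Delta u\|^2$ contributions, moving them to the left, and absorbing the small residual $C\mu^{-\frac12}\|\Delta u\|^2$ from the convective term together with the fractions $\tfrac{\mu}{4}\|\Delta u\|^2$ (from \eqref{term 2}) and $\tfrac{\mu_0}{2}\|\Delta u\|^2$ (from $J_1$ via $2\eta'$) into the viscous term $2\mu\|\Delta u\|^2$ --- where $\mu\ge\mu_0$ is used --- the net coefficient of $\|\Delta u\|^2$ on the left is $\ge \mu-\mu^{\frac12}\mathcal{A}(t)$; together with the dissipation $k\varepsilon\gamma\eta'\|\Delta\tfrac{\delta E}{\delta\phi}\|^2$ inherited from \eqref{dtEphi1}, this matches the left--hand side of \eqref{3D case 1}. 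The right--hand side then consists only of terms $C(\|\nabla u\|^2+\|\tfrac{\delta E}{\delta\phi}\|^2)$, and since $\|\nabla u\|^2\le\mathcal{A}(t)$ and $\|\tfrac{\delta E}{\delta\phi}\|^2\le(\eta')^{-1}\mathcal{A}(t)$ with $\eta'$ depending only on $\mu_0$, this is $\le C'\mathcal{A}(t)$, as claimed.

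The part that requires care --- more bookkeeping than obstacle --- is tracking the $\mu$--dependence: one must ensure that every constant surviving on the right depends only on $\mu_0$ (through $\eta'$ and the Young constants) and not on $\mu$, and that all the stray $\|\Delta u\|^2$ coefficients genuinely fit under the viscous budget $2\mu\|\Delta u\|^2$ uniformly for $\mu\ge\mu_0$; this is exactly what forces the choice of $\eta'$ above. Beyond that there is no real difficulty: the essential new point relative to Lemma \ref{highorder1} is the multiplicative estimate of $(u\cdot\nabla u,\Delta u)$ displayed above, which hides the cubic nonlinearity inside the coefficient of $\|\Delta u\|^2$, making it absorbable once $\mathcal{A}(t)$ is later shown to remain small for large viscosity.
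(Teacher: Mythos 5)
Your overall architecture coincides with the paper's: reuse \eqref{duu}, \eqref{term 2} and the phase-field inequality \eqref{dtEphi1} verbatim, pick $\eta'=\frac{\mu_0\gamma}{16k\varepsilon K^2}$ so that the $\frac{4k\varepsilon K^2}{\gamma}\|\Delta u\|^2$ contribution becomes $\frac{\mu_0}{4}\|\Delta u\|^2\leq\frac{\mu}{4}\|\Delta u\|^2$, and only re-estimate the convective term. However, your treatment of $(u\cdot\nabla u,\Delta u)$ has a genuine gap. After degrading Agmon's bound to $C\|\nabla u\|\,\|\Delta u\|^2$ and splitting the coefficient by Young, you are left with the residual $C\mu^{-\frac12}\|\Delta u\|^2$. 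This term is fatal for the lemma as stated: it is \emph{not} bounded by $C'\mathcal{A}(t)$ (since $\|\Delta u\|^2$ is not controlled by $\mathcal{A}$), so it cannot be moved to the right-hand side; and absorbing it into the viscous budget $2\mu\|\Delta u\|^2$ while still ending with the coefficient $\mu-\mu^{\frac12}\mathcal{A}(t)$ requires $C\mu^{-\frac12}\lesssim\mu$, i.e. $\mu^{\frac32}\gtrsim C$ with $C$ a data-dependent constant. The hypothesis $\mu\geq\mu_0$ with \emph{arbitrary} $\mu_0>0$ gives only $C\mu^{-\frac12}\leq C\mu_0^{-\frac12}$, which can exceed $\mu$ when $\mu_0$ is small; so "small thanks to $\mu\geq\mu_0$" is not justified, and your argument proves the inequality only under an additional absolute largeness assumption on $\mu$, not for all $\mu\geq\mu_0$.

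The paper avoids exactly this by ordering the estimates differently in \eqref{refined term 1}: first Cauchy--Schwarz pays the full power of $\Delta u$ coming from the pairing, $(u\cdot\nabla u,\Delta u)\leq\frac{\mu}{8}\|\Delta u\|^2+\frac{2}{\mu}\|u\|_{\mathbf{L}^\infty}^2\|\nabla u\|^2$, and only then Agmon, $\|u\|_{\mathbf{L}^\infty}^2\leq C(\|u\|^{\frac12}\|\Delta u\|^{\frac32}+\|u\|^2)$, is applied; the subsequent Young step targeted at $\frac{\mu^{\frac12}}{2}\|\nabla u\|^2\|\Delta u\|^2$ then leaves a remainder proportional to $\|\nabla u\|^2$, namely $C(\mu^{-\frac{11}{2}}+\mu^{-1})\|\nabla u\|^2$, which is $\leq C'\mathcal{A}(t)$ with $C'$ depending only on $\mu_0$ (using $\mu\geq\mu_0$ and the $\mu$-independent bound $\|u\|\leq C$ from Proposition \ref{low}). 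In other words, your extra step $\|\nabla u\|^{\frac12}\leq C\|\Delta u\|^{\frac12}$ wastes the structure: it converts a sub-quadratic amount of $\|\Delta u\|$ into the full $\|\Delta u\|^2$, after which no admissible splitting can produce a remainder controlled by $\mathcal{A}(t)$. If you redo the convective term as in \eqref{refined term 1}, the rest of your bookkeeping (doubling \eqref{duu}, adding $2\eta'$ times \eqref{dtEphi1}, and checking all surviving constants depend on $\mu_0$, $K$ and the data but not on $\mu$) goes through and yields \eqref{3D case 1}.
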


\begin{proof}

We only need to refine the estimate \eqref{term 1} in the proof of
Lemma \ref{highorder1} such that
 \bea (u\cdot\nabla u, \Delta
u) &\leq& \frac{\mu}{8}\|\Delta
u\|^2+\frac{2}{\mu}\|u\|_{\mathbf{L}^\infty}^2\|\nabla u\|^2\non\\
& \leq& \frac{\mu}{8}\|\Delta
u\|^2+\frac{C}{\mu}(\|u\|^{\frac12}\|\Delta
u\|^{\frac32}+\|u\|^2)\|\nabla u\|^2  \non\\
&\leq& \frac{\mu}{8}\|\Delta u\|^2+\frac{\mu^\frac12}{2}\|\nabla
u\|^2\|\Delta
u\|^2+C\left({\mu}^{-\frac{11}{2}}+\mu^{-1}\right)\|\nabla u\|^2.
\label{refined term 1}
 \eea
 Since $\mu\geq \mu_0$, we can choose $\eta$ in $\mathcal{A}(t)$ to be
  $\eta'=\frac{\mu_0\gamma}{16k\varepsilon K^2}$. Combining \eqref{refined term 1} with estimates for the other terms in the proof of Lemma \ref{highorder1}, we can easily conclude \eqref{3D case 1} with our choice of $\eta'$. The proof is complete.
\end{proof}

\bt [Global strong solution under large viscosity]
 \label{theorem in large viscosity case} Let $n=3$. For any initial data $(u_0,
d_0)\in \dot{\mathbf{V}}\times H^4_p$, if $\mu$ is sufficiently
large such that \eqref{large mu} is satisfied (see below),
  then the problem \eqref{NS}--\eqref{IC} admits a unique
  global strong solution.
 \et
 \begin{proof}
We infer from \eqref{int} and the choice of $\eta'$ in Lemma
\ref{highlargev} that
 \be \sup_{t\geq0}\int_t^{t+1}\mathcal{A}(\tau)
d\tau \leq \int_0^{+\infty} \mathcal{A}(t) dt \leq M,\non
 \ee
  where $M$ is a constant depending on
  $\|u_0\|$, $\|\phi_0\|_{H^3}$, $\mu_0$ and coefficients of the system but except $\mu$.
If the viscosity $\mu$ satisfies the following relation
 \be \mu^\frac12\geq \mathcal{A}(0)+C'M+4M+\mu_0^\frac12,\label{large mu}
 \ee
by applying the same idea as in \cite[Section 4]{LL95}, we can
deduce from \eqref{3D case 1} that
 $ \mathcal{A}(t)$ is uniformly bounded such that
 $$\mathcal{A}(t)\leq \mu^\frac12, \quad \forall \, t\geq 0.$$
 Based on the uniform-in-time estimates and the Galerkin scheme,
 we are able to prove the existence and uniqueness of the global strong solution to the problem \eqref{NS}--\eqref{IC}.
We leave the details to interested readers.
 \end{proof}

\section{Regularity criteria}\setcounter{equation}{0}
In this section, we are going to establish some regularity criteria
for solutions to the problem \eqref{NS}--\eqref{IC} in the three
dimensional case. These criteria only involve the velocity field,
which indicate that in spite of the nonlinear coupling between the
equations for  velocity field and the phase function, the velocity
field indeed plays a dominant role in regularity for solutions to the
system \eqref{NS}--\eqref{phasefield}, just as the decoupled
incompressible Navier--Stokes equations (cf. e.g., \cite{Be, Se}).

First, we provide a result on regularity criteria in terms of the
velocity $u$ (cf. \cite{Se}) or its gradient $\nabla u$ (cf. \cite{Be}).

\begin{theorem} \label{theorem on logarithmic criterion}
Suppose $n=3$. For $(u_0, \phi_0) \in \dot{\mathbf{V}} \times
H^4_p$, let $(u(t), \phi(t))$ be a local smooth solution to the
problem \eqref{NS}--\eqref{IC} on $[0,T)$ for some $0<T<+\infty$.
Suppose that one of the following conditions holds,
 \begin{itemize}
 \item[(i)] $ \displaystyle{\int_{0}^{T}} \|\nabla u(t)\|_{\mathbf{L}^p}^s dt < +\infty$, for $\displaystyle{\frac{3}{p}+\frac{2}{s} \leq 2}$, $\displaystyle{ \frac32<p\leq +\infty}$,
 \item[(ii)] $ \displaystyle{\int_{0}^{T}\|u(t)\|_{\mathbf{L}^p}^s dt < +\infty}$, for $\displaystyle{\frac{3}{p}+\frac{2}{s} \leq 1}$, $3<p\leq +\infty$.
 \end{itemize}
 Then $(u, \phi)$ can be extended beyond $T$.
\end{theorem}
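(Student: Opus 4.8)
The plan is to run a bootstrap/continuation argument: assuming the Serrin-type integrability of $u$ or $\nabla u$ on $[0,T)$, I will show that $\mathcal{A}(t)=\|\nabla u\|^2+\eta\|\delta E/\delta\phi\|^2$ stays bounded up to time $T$, which by the local existence theorem (Theorem \ref{locstr}) and its continuation principle lets $(u,\phi)$ be extended past $T$. The first step is to recall from Proposition \ref{comp} (applied with $\phi_0\in H^4_p\subset H^3_p$) that $\|\phi(t)\|_{H^3}+\|\nabla\phi(t)\|_{\mathbf{L}^\infty}\le K$ uniformly on $[0,T)$ with $K$ independent of the solution's regularity, so the hypothesis \eqref{K} of Lemma \ref{highorder1} holds and the differential inequality \eqref{higha} is available; moreover, from Proposition \ref{low} we already have the basic energy bounds $\|u(t)\|+\|\phi(t)\|_{H^2}\le C$ and $\int_0^\infty\mu\|\nabla u\|^2\,dt\le C$.

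Next I would re-examine the term $(u\cdot\nabla u,\Delta u)$ in \eqref{duu}, which in Lemma \ref{highorder1} was estimated crudely by $\frac{\mu}{8}\|\Delta u\|^2+C(\|\nabla u\|^6+\|\nabla u\|^2)$ — the source of the cubic term $\mathcal{A}^3$ that prevents global-in-time control. Instead, under hypothesis (ii) I would use the Serrin-type estimate $(u\cdot\nabla u,\Delta u)\le \frac{\mu}{8}\|\Delta u\|^2 + C\|u\|_{\mathbf{L}^p}^2\|\nabla u\|_{\mathbf{L}^{2p/(p-2)}}^2$, then interpolate $\|\nabla u\|_{\mathbf{L}^{2p/(p-2)}}$ between $\|\nabla u\|$ and $\|\Delta u\|$ (using $H^2\hookrightarrow W^{1,2p/(p-2)}$ in $3$D when $3<p\le\infty$) to absorb the top-order factor into $\frac{\mu}{8}\|\Delta u\|^2$ at the cost of a factor $\|u\|_{\mathbf{L}^p}^s$ multiplying $\|\nabla u\|^2$, where $s$ is the conjugate Serrin exponent with $\frac3p+\frac2s\le1$; under hypothesis (i) I would instead bound $(u\cdot\nabla u,\Delta u)\le\|\nabla u\|_{\mathbf{L}^p}\|u\|_{\mathbf{L}^{2p/(p-2)}}\|\Delta u\|$ (or integrate by parts and estimate $\|\nabla u\|_{\mathbf{L}^p}\|\nabla u\|_{\mathbf{L}^{2p/(p-2)}}^2$), again interpolating the non-$\mathbf{L}^p$ factors between $\|u\|$ (bounded) and $\|\nabla u\|$, $\|\Delta u\|$ to get $\frac{\mu}{8}\|\Delta u\|^2+C\|\nabla u\|_{\mathbf{L}^p}^s\|\nabla u\|^2$. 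The remaining terms in \eqref{higha}, namely \eqref{term 2} and $J_1,\dots,J_7$, already only contribute linearly in $\mathcal{A}(t)$ (their coefficients depend only on $K$ and the data), so no change is needed there. The net result is an inequality of the form
\be
\dfrac{d}{dt}\mathcal{A}(t)+\frac{\mu}{4}\|\Delta u\|^2+k\gamma\varepsilon\eta\Big\|\Delta\dfrac{\delta E}{\delta\phi}\Big\|^2\le \big(C+C\Theta(t)\big)\mathcal{A}(t),\non
\ee
where $\Theta(t)=\|\nabla u(t)\|_{\mathbf{L}^p}^s$ in case (i) and $\Theta(t)=\|u(t)\|_{\mathbf{L}^p}^s$ in case (ii), and in both cases $\int_0^T\Theta(t)\,dt<+\infty$ by assumption.

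Then Gronwall's lemma gives $\mathcal{A}(t)\le\mathcal{A}(0)\exp\!\big(CT+C\int_0^T\Theta\,dt\big)<+\infty$ for all $t\in[0,T)$, and integrating the inequality also yields $u\in L^2(0,T;\mathbf{H}^2)$ and $\delta E/\delta\phi\in L^2(0,T;H^2_p)$; combined with the $H^3$-bound on $\phi$ from Proposition \ref{comp} and Lemma \ref{eqes} (which turns control of $\|\Delta(\delta E/\delta\phi)\|$ into control of $\|\Delta^2\phi\|$ and hence of $\|\phi\|_{H^4}$, after also bounding $\|\phi_t\|_{H^2}$ from the equation), this shows $(u(t),\phi(t))$ stays bounded in $\dot{\mathbf{V}}\times H^4_p$ as $t\to T^-$. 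Applying Theorem \ref{locstr} with initial data $(u(T^-),\phi(T^-))$ — or invoking the standard fact that the local existence time depends only on the $\dot{\mathbf{V}}\times H^4_p$-norm of the data — we obtain a strong solution on $[T,T+T_1)$ for some $T_1>0$ that agrees with $(u,\phi)$ on the overlap by the uniqueness in Theorem \ref{weak}, giving the desired extension beyond $T$. The only delicate point is the borderline exponent $p=\infty$ in case (i) (and $p=+\infty$ in (ii)), where the interpolation inequality for $\nabla u$ in $\mathbf{L}^{2p/(p-2)}=\mathbf{L}^2$ degenerates; this is exactly the situation that in the next theorem is handled by the logarithmic refinement (Brezis--Gallouet type), so here I would simply restrict to $p<\infty$ — matching the strict inequalities $\frac32<p$, $3<p$ in the statement — or, for $p=\infty$ in (i), use the standard substitute $\|u\|_{\mathbf{L}^\infty}\le C\|\nabla u\|_{\mathbf{L}^\infty}$ together with $\|\nabla u\|^2\le\|u\|\|\Delta u\|$, which still closes since $\frac2s\le2$ forces $s\ge1$. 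The main obstacle, and the heart of the argument, is ensuring that the $\Theta(t)\mathcal{A}(t)$ structure is genuinely \emph{linear} in $\mathcal{A}(t)$ after the interpolation — i.e. that all top-order derivatives get absorbed into the dissipation and no hidden power of $\|\nabla u\|$ survives multiplying $\|\Delta u\|^2$; this is precisely where the Serrin exponent condition $\frac3p+\frac2s\le 1$ (resp. $\le 2$) is used.
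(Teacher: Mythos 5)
Your proposal follows essentially the same route as the paper's proof: keep the higher-order inequality of Lemma \ref{highorder1} (valid on $[0,T)$ thanks to the uniform $H^3$-bound of Proposition \ref{comp}), re-estimate only the convection term $(u\cdot\nabla u,\Delta u)$ via H\"older plus Gagliardo--Nirenberg interpolation so that the Gronwall factor becomes $\|\nabla u\|_{\mathbf{L}^p}^{2p/(2p-3)}$ (resp.\ $\|u\|_{\mathbf{L}^p}^{2p/(p-3)}$), integrable on $[0,T)$ under the stated Serrin conditions, and conclude boundedness of $\mathcal{A}(t)$ and hence of the $\mathbf{H}^1\times H^4$ norm, allowing extension beyond $T$. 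The only blemishes are cosmetic: in the integration-by-parts variant for case (i) the correct exponent is $2p/(p-1)$ rather than $2p/(p-2)$, and the endpoint $p=+\infty$ (which the theorem does include, so one cannot simply restrict to $p<\infty$) causes no degeneracy at all, since the $\|\Delta u\|$-exponent $3/p$ in the interpolation vanishes and the estimates reduce to $\|\nabla u\|_{\mathbf{L}^\infty}\|\nabla u\|^2$ and $\|u\|_{\mathbf{L}^\infty}^2\|\nabla u\|^2$.
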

\begin{proof}
We keep in mind that the uniform estimates \eqref{unilow} and
\eqref{phih3} are satisfied for $t\geq 0$.
Suppose that (i) is satisfied.  For $p> \frac32$,  we estimate
\eqref{term 1} as follows
 \bea (u\cdot\nabla u, \Delta
u)&=&-\int_{Q}\nabla_ku_j\nabla_ju_i\nabla_ku_i dx - \int_Q u_j\nabla_j(\nabla_ku_i) \nabla_ku_idx\non\\
&=&-\int_{Q}\nabla_ku_j\nabla_ju_i\nabla_ku_i dx\non\\
&\leq& C\|\nabla
u\|_{\mathbf{L}^{p}}\|\nabla u\|_{\mathbf{L}^{_\frac{2p}{p-1}}}^2 \non\\
&\leq& C\|\nabla u\|_{\mathbf{L}^{p}}\Big(\|\nabla
u\|^{\frac{2p-3}{p}}\|\Delta u\|^{\frac{3}{p}}+\|\nabla u\|^2 \Big)
\non\\
&\leq& \frac{\mu}{8}\|\Delta u\|^2+C\left(\|\nabla
u\|_{\mathbf{L}^{p}}+\|\nabla
u\|_{\mathbf{L}^p}^{\frac{2p}{2p-3}}\right)\|\nabla u\|^2.
\label{uuu}
 \eea
Combining \eqref{uuu} with the other estimates in the proof of Lemma
\ref{highorder1},  we obtain that
 \be \dfrac{d }{dt}\mathcal{A}(t) +\mu\|\Delta
u\|^2+ k\gamma\varepsilon\eta \Big\|\Delta\dfrac{\delta
E}{\delta\phi} \Big\|^2 \leq C\left(1+\|\nabla
u\|_{\mathbf{L}^p}^{\frac{2p}{2p-3}}\right)\mathcal{A}(t).
\label{rehighorder}
 \ee
 Then by the Gronwall inequality, we see that $\mathcal{A}(t)\leq C_T$ for $t\in [0,T]$, which implies that the $\mathbf{H}^1\times H^4$ norm of the strong solution $(u, \phi)$ is bounded on interval [0,T]. This yields that $[0, T )$ cannot be the maximal interval of existence, and the solution $(u, \phi)$ can be extended
beyond $T$.

 Next, we suppose that (ii) is satisfied. For $p>3$, we estimate \eqref{term
1} in an alternative way such that
 \bea (u\cdot\nabla u, \Delta
u)&\leq& C\|u\|_{\mathbf{L}^{p}}\|\nabla
u\|_{\mathbf{L}^\frac{2p}{p-2}}\|\Delta u\|\non\\
&\leq& 
C\|u\|_{\mathbf{L}^{p}}\|\Delta u\| \left(\|\Delta
u\|^\frac{3}{p}\|\nabla u\|^\frac{p-3}{p}+\|\nabla u\|\right)
\non\\
&\leq& \frac{\mu}{8}\|\Delta
u\|^2+C\left(\|u\|^\frac{2p}{p-3}_{\mathbf{L}^p}+1\right)\|\nabla
u\|^2.
 \eea
  then by the Gronwall inequality, $\mathcal{A}(t)\leq C_T$ for $t\in [0,T]$, which again yields our conclusion. The proof is complete.
\end{proof}

As for the conventional Navier--Stokes equations (see, for instance,
\cite{ZL,ZG,CV}), we can improve the results in Theorem \ref{theorem
on logarithmic criterion} and obtain some logarithmical-type
regularity criteria for our phase-field Navier--Stokes system
\eqref{NS}--\eqref{IC}.
 \begin{theorem} \label{theorem on logarithmic criterion2}
Suppose $n=3$. For $(u_0, \phi_0) \in (\dot{\mathbf{V}}\cap
\mathbf{H}^2_p) \times H^5_p$, let $(u, \phi)$ be a local smooth
solution to the problem \eqref{NS}--\eqref{IC} on $[0,T)$ for some
$0<T<+\infty$. If one of the following conditions holds,
 \begin{itemize}
 \item[(i)] \be \int_{0}^{T} \dfrac{\|\nabla u(t)\|_{\mathbf{L}^p}^s}{1+\ln(e+\|\nabla
u(t)\|_{\mathbf{L}^p})}dt < +\infty, \ \ \ \mbox{for} \
\frac{3}{p}+\frac{2}{s} \leq 2, \ \ \frac32 \leq p \leq 6,
\label{integrability condition1}
 \ee
 \item[(ii)]
 \be
 \int_{0}^{T} \dfrac{\|u(t)\|_{\mathbf{L}^p}^s}{1+\ln(e+\|u(t)\|_{\mathbf{L}^\infty})}dt < +\infty, \ \ \ \mbox{for} \
\frac{3}{p}+\frac{2}{s} \leq 1, \ \ 3<p \leq +\infty,
\label{integrability condition2}
 \ee
 \end{itemize}
 then $(u, \phi)$ can be extended beyond $T$.
\end{theorem}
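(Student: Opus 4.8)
The plan is to rerun the argument of Theorem~\ref{theorem on logarithmic criterion} one derivative higher, at the level of the second-order energy made available by the stronger data $(u_0,\phi_0)\in(\dot{\mathbf{V}}\cap\mathbf{H}^2_p)\times H^5_p$, and then to close the resulting estimate by a logarithmic Gronwall lemma in place of the plain one used there. Throughout one keeps the uniform bounds $\|u(t)\|+\|\phi(t)\|_{H^2}\le C$ of Proposition~\ref{low} and, since $H^5_p\hookrightarrow H^3_p$, the uniform bounds $\|\phi(t)\|_{H^3}+\|\nabla\phi(t)\|_{\mathbf{L}^\infty}\le C$ of Proposition~\ref{comp}; on $[0,T)$ the smooth solution has finite $\mathbf{H}^2\times H^5$ norm for each $t$, so the quantities below are well defined. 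Set
\[
\mathcal{B}(t)=\|\Delta u(t)\|^2+\eta\Big\|\nabla\frac{\delta E}{\delta\phi}(t)\Big\|^2,
\]
with $\eta>0$ a small constant chosen, as in Lemma~\ref{highorder1}, so that the membrane part is dominated; by the last inequality of Lemma~\ref{eqes} together with the $H^3$-bound on $\phi$, $\mathcal{B}(t)$ is comparable to $\|u(t)\|_{\mathbf{H}^2}^2+\|\phi(t)\|_{H^5}^2$ up to an additive constant.

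First I would derive the second-order differential inequality for $\mathcal{B}$. Applying $\Delta$ to \eqref{NS} and testing with $\Delta u$, and differentiating the identity \eqref{dtEphi} once more in space and testing appropriately, one obtains — by computations parallel to those for $J_1,\dots,J_7$ in Lemma~\ref{highorder1}, now using $\|\phi\|_{H^3}\le C$, $\|\nabla\phi\|_{\mathbf{L}^\infty}\le C$, Lemma~\ref{eqes} and Calderón--Zygmund estimates — a bound in which every membrane term, every contribution of the force $\frac{\delta E}{\delta\phi}\nabla\phi$, and every term not originating from the inertial term $u\cdot\nabla u$ is absorbed into the dissipation $\mu\|\nabla\Delta u\|^2+k\varepsilon\gamma\eta\|\nabla\Delta\frac{\delta E}{\delta\phi}\|^2$ or is controlled by $C(1+\|\nabla u(t)\|^2)\mathcal{B}(t)+C$, with $\|\nabla u\|^2\in L^1(0,T)$ by \eqref{basic energy law}. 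There remains the inertial contribution: after the decomposition $\Delta(u\cdot\nabla u)=u\cdot\nabla\Delta u+\Delta u\cdot\nabla u+(\text{bilinear terms in }\nabla u,\nabla^2 u)$, the convective piece $\int(u\cdot\nabla\Delta u)\cdot\Delta u\,dx=\tfrac12\int u\cdot\nabla|\Delta u|^2\,dx$ vanishes by incompressibility, and the remaining summands are estimated as follows. In case~(i) one keeps $\nabla u$ in $\mathbf{L}^p$ and places the other two factors in $\mathbf{L}^{2p/(p-1)}$ — which, precisely because $\tfrac32\le p\le 6$, lies in $[2,6]$ — so that Gagliardo--Nirenberg interpolation between $\|\Delta u\|$ and $\|\nabla\Delta u\|$ followed by Young's inequality yields, since $p>\tfrac32$, an inertial bound $\le\tfrac{\mu}{8}\|\nabla\Delta u\|^2+C\|\nabla u\|_{\mathbf{L}^p}^{\,2p/(2p-3)}\|\Delta u\|^2$. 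In case~(ii) one instead integrates those summands by parts once more so as to expose $u$ (again a convective piece vanishes by $\nabla\cdot u=0$), applies Hölder with exponent $p$ on $u$ and $\tfrac{2p}{p-2}\in[2,6)$ on $\nabla^2u$, and obtains analogously, since $p>3$, an inertial bound $\le\tfrac{\mu}{8}\|\nabla\Delta u\|^2+C\|u\|_{\mathbf{L}^p}^{\,2p/(p-3)}\|\Delta u\|^2$.

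The exponents $\tfrac{2p}{2p-3}$ and $\tfrac{2p}{p-3}$ are exactly the scaling-critical $s$ for which $\tfrac3p+\tfrac2s=2$, resp.\ $\tfrac3p+\tfrac2s=1$, hence in the admissible ranges they are $\le$ the $s$ of \eqref{integrability condition1}, resp.\ \eqref{integrability condition2}. Writing $\Psi(t)$ for the corresponding integrand and $Z(t)$ for the quantity inside the logarithm in its denominator, the decisive point is that $Z$ is dominated by the energy: $\|\nabla u\|_{\mathbf{L}^p}\le C\|u\|_{\mathbf{H}^2}\le C(1+\mathcal{B}^{1/2})$ because $p\le 6$ in case~(i), and $\|u\|_{\mathbf{L}^\infty}\le C\|u\|_{\mathbf{H}^2}\le C(1+\mathcal{B}^{1/2})$ by $\mathbf{H}^2_p\hookrightarrow\mathbf{L}^\infty$ in three dimensions in case~(ii); hence $1+\ln(e+Z(t))\le C(1+\ln(e+\mathcal{B}(t)))$. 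Combining this with $\|\Delta u\|^2\le\mathcal{B}$, the identity $Z^s=\Psi(1+\ln(e+Z))$, and $Z^{s_0}\le Z^s+1$ for the critical $s_0\le s$, the offending term is bounded by $C\Psi(t)\,\mathcal{B}(t)(1+\ln(e+\mathcal{B}(t)))+C\mathcal{B}(t)(1+\ln(e+\mathcal{B}(t)))$, so that
\[
\frac{d}{dt}\mathcal{B}(t)\ \le\ C\big(\Psi(t)+\|\nabla u(t)\|^2+1\big)\,\mathcal{B}(t)\,\big(1+\ln(e+\mathcal{B}(t))\big),\qquad t\in[0,T).
\]
Since $\Psi\in L^1(0,T)$ by hypothesis and $\|\nabla u\|^2\in L^1(0,T)$ by \eqref{basic energy law}, setting $y=\ln(e+\mathcal{B})$ turns this into $y'\le C(\Psi+\|\nabla u\|^2+1)(1+y)$, whence $y$, and so $\mathcal{B}$, stays bounded on $[0,T)$ by Gronwall's lemma. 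Therefore $u\in L^\infty(0,T;\mathbf{H}^2_p)\cap L^2(0,T;\mathbf{H}^3_p)$ and $\phi\in L^\infty(0,T;H^5_p)$, which is more than enough regularity to re-solve the problem locally from a time near $T$ (cf.\ Theorem~\ref{locstr}) and thus continue $(u,\phi)$ beyond $T$.

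The main obstacle is the second-order a priori estimate itself: at the $\mathbf{H}^2\times H^5$ level the fourth-order membrane equation \eqref{phasefield} produces, through $g(\phi)$ and the two penalty functionals, a long list of nonlinear couplings between fluid and membrane that must each be matched against the dissipation and the uniform bounds of Proposition~\ref{comp}, and one must arrange the integrations by parts and interpolation exponents in the inertial term so as to land on exactly the scaling-critical power $s$ with the power of $\|\Delta u\|$ equal to $2$, uniformly over the whole admissible range of $(p,s)$ — the values $p$ near $\tfrac32$ in case~(i) being where this is tightest. Once the differential inequality is in hand, the logarithmic Gronwall step is routine.
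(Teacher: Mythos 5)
Your strategy is sound and would prove the theorem, but for case (i) it is a genuinely different route from the paper's. The paper does \emph{not} close case (i) with a logarithmic Gronwall at the second-order level: it keeps the crude second-order inequality \eqref{uh2phih5}, whose right-hand side is only $C(\mathcal{A}^5+\mathcal{A})$ and does not use the criterion at all, and instead invokes the criterion through the \emph{first-order} inequality \eqref{rehighorder}. It then partitions $[0,T]$ into finitely many intervals on which $\int\mathcal{Q}\,dt\le\epsilon\le\frac{1}{5C_*}$, so that Gronwall gives $\mathcal{A}(t)\lesssim\bigl(e+\sup\|\Delta u\|\bigr)^{C_*\epsilon}$ with a small exponent; feeding this into \eqref{uh2phih5} and integrating produces $\sup\|\Delta u\|^2\lesssim 1+\sup\|\Delta u\|$, which self-improves and is iterated over the intervals. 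You instead estimate the inertial term at the $\mathbf{H}^2$ level directly with the $L^p$ hypothesis (your exponent bookkeeping, giving $C\|\nabla u\|^{2p/(2p-3)}_{\mathbf{L}^p}\|\Delta u\|^2$ after absorbing $\|\nabla\Delta u\|^{3/p}$, is correct for $p>\tfrac32$, the same implicit restriction the paper makes) and close with a single log-Gronwall; this is essentially the technique the paper reserves for case (ii), and your case (ii) coincides with the paper's (modulo the harmless slip that in (ii) the power in the numerator is of $\|u\|_{\mathbf{L}^p}$ while the logarithm is of $\|u\|_{\mathbf{L}^\infty}$, so the identity $Z^s=\Psi(1+\ln(e+Z))$ should be replaced by $\|u\|_{\mathbf{L}^p}^s=\Psi(1+\ln(e+\|u\|_{\mathbf{L}^\infty}))$, which is what you actually need). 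What your route buys is a unified, shorter argument for both cases; what the paper's route buys is that the membrane terms never need to be estimated sharply, since polynomial growth in $\mathcal{A}$ is tolerated.

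The one point you gloss over, and on which your whole scheme hinges, is the claim that every non-inertial term is bounded by $C(1+\|\nabla u\|^2)\mathcal{B}+C$ plus absorbable dissipation. This is true, but it is \emph{not} obtained by computations "parallel" to the paper's case-(i) estimates: there, the transport term with all derivatives on $\phi$ (the paper's $J'_{1f}$, i.e.\ $\|u\cdot\nabla\Delta^2\phi\|^2$ inside $\|\nabla\Delta(u\cdot\nabla\phi)\|^2$) is bounded by $C\bigl\|\tfrac{\delta E}{\delta\phi}\bigr\|^4\sim\mathcal{B}^2$, which is superlinear in $\mathcal{B}$ and would destroy any log-Gronwall. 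To stay linear you must treat it as the paper does in case (ii): interpolate $\|\phi\|_{H^4}^2\le\|\phi\|_{H^3}\|\phi\|_{H^5}$ and use the third estimate of Lemma \ref{eqes} (or, equivalently, put $u$ in $\mathbf{L}^6$ and $\nabla^4\phi$ in $\mathbf{L}^3$), so that the term is controlled by dissipation plus $C\bigl(\bigl\|\nabla\tfrac{\delta E}{\delta\phi}\bigr\|^2+\|\nabla u\|^2\bigr)\le C(\mathcal{B}+1)$, with $\|\nabla u\|^2\in L^1(0,T)$ from \eqref{basic energy law}. With that single refinement made explicit, your differential inequality and the double-logarithmic Gronwall closure are correct, and the continuation beyond $T$ follows as you say.
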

\begin{proof}
We recall that uniform estimates \eqref{unilow} and \eqref{phih3}
still hold for $t\geq 0$.

\textbf{Case} (i). Suppose that \eqref{integrability condition1} is
satisfied. Applying $\Delta$ to both sides of equation \eqref{NS},
multiplying the resultant with $\Delta u$ and integrating over $Q$,
we get
 \be \frac12\frac{d}{dt}\|\Delta u\|^2+\mu\|\nabla\Delta
u\|^2=\left(\nabla (u\cdot\nabla u), \nabla \Delta u \right)
-\left(\nabla\Big(\dfrac{\delta E}{\delta\phi}\nabla\phi\Big),
\nabla\Delta u \right):=I_1+I_2, \label{time derivative for laplace
u}
 \ee
 where
 \bea I_1&\leq& C\|\nabla \Delta u\|(\|\nabla u\|_{\mathbf{L}^4}^2+\|u\|_{\mathbf{L}^\infty}\|\Delta u\|) \non\\
&\leq& C\|\nabla \Delta u\|\Big(\|\nabla u\|_{\mathbf{L}^4}^2+ \|u\|_{\mathbf{H}^{2}}^\frac12\|u\|_{\mathbf{H}^1}^\frac12\|\Delta u\|\Big)\non\\
&\leq& C\|\nabla \Delta u\|\left(\|\nabla \Delta u\|^\frac12\|\nabla u\|^\frac32+\|\nabla \Delta u\|^\frac34\|\nabla u\|^\frac54+\|\nabla u\|^2\right)\non\\
&\leq& \frac{\mu}{8}\|\nabla\Delta u\|^2+C\big(\|\nabla
u\|^{10}+\|\nabla u\|^{4} \big). \non
 \eea
 Estimate for the term $I_2$ will be postponed. On the other hand, we deduce from equation \eqref{phasefield} that
\bea &&\frac{1}{2}\frac{d}{dt}\Big\|\nabla\dfrac{\delta
E}{\delta\phi}\Big\|^2 = -\left(\frac{\partial}{\partial
t}\dfrac{\delta
E}{\delta\phi}, \Delta\dfrac{\delta E}{\delta\phi} \right) \non\\
&=&-k\varepsilon\Big(\Delta^2\phi_t, \Delta \dfrac{\delta
E}{\delta\phi}\Big)+\frac{k}{\varepsilon}\left( \partial_t[\Delta
(\phi^3-\phi)], \Delta\dfrac{\delta E}{\delta\phi}\right)
-\frac{6k}{\varepsilon^2}\left(\phi
f(\phi)\phi_t,\Delta\dfrac{\delta E}{\delta\phi}\right)
\non\\
&& -\frac{k}{\varepsilon^2}\left((3\phi^2-1)\partial_t f(\phi),\Delta\dfrac{\delta E}{\delta\phi}\right)- M_2\frac{d}{dt}B(\phi)\left(f(\phi), \Delta\dfrac{\delta E}{\delta\phi}\right)\non\\
&& - M_2(B(\phi)-\beta)\left(\partial_t f(\phi),\Delta \dfrac{\delta
E}{\delta\phi}\right)
 \non\\
&=& \sum_{i=1}^6 J'_i. \label{tdphih5}
 \eea
 The term $J_1'$ can be estimated in the following way:
\bea J_1'&=& -k\varepsilon\gamma\Big\|\nabla\Delta\dfrac{\delta
E}{\delta\phi}\Big\|^2-k\varepsilon
\Big(\nabla\Delta(u\cdot\nabla\phi), \nabla \Delta\dfrac{\delta
E}{\delta\phi} \Big) \non\\
&\leq&-\frac{7k\varepsilon\gamma}{8}\Big\|\nabla\Delta\dfrac{\delta
E}{\delta\phi}\Big\|^2+\frac{2k\varepsilon}{\gamma}\big\|\nabla\Delta(u\cdot\nabla\phi)
\big\|^2 \non\\
&\leq&-\frac{7k\varepsilon\gamma}{8}\Big\|\nabla\Delta\dfrac{\delta
E}{\delta\phi}\Big\|^2+C\int_Q(\nabla_j\nabla_i u_k\nabla_i\nabla_k \phi )^2dx+C\int_Q (\nabla_i u_k\nabla_j\nabla_i\nabla_k \phi)^2dx\non\\
&& +C\int_Q(\nabla_j\nabla_i\nabla_i u_k \nabla_k\phi)^2dx+ C\int_Q (\nabla_i\nabla_i u_k \nabla_j\nabla_k \phi)^2dx\non\\
&&+C\int_Q(\nabla_j u_k \nabla_i\nabla_i \nabla_k \phi)^2 dx+C\int_Q(u_k\nabla_j\nabla_i\nabla_i\nabla_k \phi)^2 dx\non\\
&:=& -\frac{7k\varepsilon\gamma}{8}\Big\|\nabla\Delta\dfrac{\delta
E}{\delta\phi}\Big\|^2+ J'_{1a}+...+J'_{1f},\non
 \eea
 where
 \bea
 J'_{1a}+J'_{1d}
 &\leq&
C\|\nabla u\|_{\mathbf{W}^{1,3}}^2\|\phi\|_{W^{2,6}}^2 \leq
C\big(\|\nabla\Delta u\|^\frac32\|\nabla
u\|^\frac12+\|\nabla u\|^2\big) \non\\
 &\leq& \frac13\|\nabla\Delta u\|^2+C\|\nabla u\|^2,
 \non\\
   J'_{1b}+J'_{1e}
   &\leq&
 C\|\nabla u\|_{\mathbf{L}^\infty}^2\|\phi\|_{H^3}^2
 \leq C\big(\|\nabla\Delta u\|^{\frac32}\|\nabla
 u\|^{\frac12}+\|\nabla u\|^2\big)
 \non\\
 &\leq& \frac13\|\nabla\Delta u\|^2+C\|\nabla u\|^2,
 \non\\
  J'_{1c}
  &\leq& C\|\nabla\Delta
u\|^2\|\nabla\phi\|_{\mathbf{L}^\infty}^2 \leq C_1\|\nabla\Delta
u\|^2,
 \non\\
  J'_{1f}
  &\leq&
C\|u\|_{\mathbf{L}^\infty}^2\|\phi\|_{H^4}^2 \leq
C\|u\|_{\mathbf{L}^\infty}^2 \left(\Big\|\dfrac{\delta
E}{\delta\phi}\Big\|^2+1  \right)  \non\\
  &\leq& C(\|
u\|\|\nabla\Delta u\|+\|u\|^2)\Big\|\dfrac{\delta
E}{\delta\phi}\Big\|^2+C\|
\nabla u\|^\frac32\|\nabla \Delta u\|^\frac12 \non\\
 &\leq& \frac13\|\nabla\Delta u\|^2+C\left(\Big\|\dfrac{\delta
E}{\delta\phi}\Big\|^4+\Big\|\dfrac{\delta
E}{\delta\phi}\Big\|^2+\|\nabla u\|^2\right).\non
 \eea
 Summing up, we get
\be J'_1 \leq
-\frac{7k\varepsilon\gamma}{8}\Big\|\nabla\Delta\dfrac{\delta
E}{\delta\phi}\Big\|^2 +\left(1+C_1\right)\|\nabla \Delta
u\|^2+C\left(\Big\|\dfrac{\delta
E}{\delta\phi}\Big\|^4+\Big\|\dfrac{\delta
E}{\delta\phi}\Big\|^2+\|\nabla u\|^2\right),\non
 \ee
where $C_1$ is a constant depending on $\|u_0\|$, $\|\phi_0\|_{H^3}$
and coefficients of the system due to estimates \eqref{unilow} and
\eqref{phih3}.

The remaining terms $J'_2, ...,J'_6$ can be estimated as for
$J_2,...,J_7$ in the proof of Lemma \ref{highorder1} by using a
similar argument with minor modifications (replacing $\dfrac{\delta
E}{\delta\phi}$ in $J_2,...,J_7$ by $\Delta\dfrac{\delta
E}{\delta\phi}$). Consequently,
 \bea \sum_{i=2}^6 J'_i &\leq&
\frac{k\varepsilon\gamma}{4}\Big\|\nabla\Delta\dfrac{\delta
E}{\delta\phi}\Big\|^2+C\|\nabla u\|^2+C\Big\|\Delta\dfrac{\delta
E}{\delta\phi}\Big\|^2\non\\
 &\leq& \frac{3k\varepsilon\gamma}{8}\Big\|\nabla\Delta\dfrac{\delta
E}{\delta\phi}\Big\|^2+C\|\nabla u\|^2+C\Big\|\dfrac{\delta
E}{\delta\phi}\Big\|^2.
 \non
 \eea
Set
 $$\eta_1= \dfrac{\mu}{4(1+C_1)}.$$
Now we turn to estimate $I_2$:
 \bea I_2 &\leq& \frac{\mu}{8}\|\nabla\Delta
u\|^2+\|\nabla\phi\|_{\mathbf{L}^\infty}^2\Big\|\nabla\dfrac{\delta
E}{\delta\phi}\Big\|^2+\|\Delta\phi\|_{L^6}^2\Big\|\dfrac{\delta
E}{\delta\phi}\Big\|^2_{L^3}  \non\\
&\leq& \frac{\mu}{8}\|\nabla\Delta u\|^2+C\left(\Big\|\dfrac{\delta
E}{\delta\phi}\Big\|^{\frac43}\Big\|\nabla\Delta\dfrac{\delta
E}{\delta\phi}\Big\|^{\frac23}+\Big\|\dfrac{\delta
E}{\delta\phi}\Big\|^\frac53\Big\|\nabla\Delta\dfrac{\delta
E}{\delta\phi}\Big\|^\frac13+ \Big\|\dfrac{\delta
E}{\delta\phi}\Big\|^2 \right).
\non\\
&\leq& \frac{\mu}{8}\|\nabla\Delta
u\|^2+\frac{k\varepsilon\gamma\eta_1}{4}\Big\|\nabla\Delta\dfrac{\delta
E}{\delta\phi}\Big\|^2+C\Big\|\dfrac{\delta
E}{\delta\phi}\Big\|^2.\non
 \eea
Collecting the above estimates together, we deduce that \bea &&
\frac{d}{dt}\left(\|\Delta u\|^2+ \eta_1\Big\|\nabla\dfrac{\delta
E}{\delta\phi}\Big\|^2\right)+\frac{k\varepsilon\gamma\eta_1}{2}\Big\|\nabla\Delta\dfrac{\delta
E}{\delta\phi}\Big\|^2 + \mu\|\nabla \Delta
u\|^2\non\\
&\leq& C\left(\Big\|\dfrac{\delta
E}{\delta\phi}\Big\|^4+\Big\|\dfrac{\delta
E}{\delta\phi}\Big\|^2+\|\nabla u\|^{10}+\|\nabla u\|^2\right)\non\\
&\leq&
 C\left(\mathcal{A}^5(t)+\mathcal{A}(t)\right).\label{uh2phih5}
 \eea
For the sake of simplicity, we denote
$$ \mathcal{Q}(t) =\frac{1+\|\nabla
u(t)\|_{\mathbf{L}^p}^{\frac{2p}{2p-3}} }{1+\ln(e+\|\nabla
u(t)\|_{\mathbf{L}^p})}.$$ For $\frac32< p\leq 6$ we infer from
\eqref{rehighorder} that \bea \dfrac{d }{dt}\mathcal{A}(t) &\leq&
C\left(1+\|\nabla
u\|_{\mathbf{L}^p}^{\frac{2p}{2p-3}}\right)\mathcal{A}(t)\non\\
&\leq& C_* \mathcal{Q}(t)\left[1+\ln\big(e+\|\Delta
u(t)\|\big)\right]\mathcal{A}(t),\label{cstar}
 \eea
  where $C_*$ is a constant depending on $\|u_0\|$, $\|\phi_0\|_{H^3}$
and coefficients of the system.

Because of \eqref{integrability condition1}, we denote $\int_{0}^{T}
\mathcal{Q}(t)
 dt=M<+\infty$. Fix $\epsilon\in \Big(0,  \dfrac{1}{5C_*}
\Big]$. Then there exist
$0=\tilde{t}_0<\tilde{t}_1<...<\tilde{t}_{N-1}<\tilde{t}_{N}=T$ such
that
 \[\int_{\tilde{t}_{i-1}}^{\tilde{t}_i}  \mathcal{Q}(t)
 dt\leq \frac{\epsilon}{2}, \quad \forall \ i=1,2,...,N,\ \ \text{with} \ N=\left[\frac{2M}{\epsilon}\right]+1.
 \]
 Set $t_0=\tilde{t}_0=0$ and $t_{N+1}=\tilde{t}_{N}=T$. It follows from our assumption on the initial data that $\mathcal{A}(0)<+\infty$.
 Due to \eqref{int}, for each $i=1,2,...,N$, there exists $t_i\in (\tilde{t}_{i-1}, \tilde{t}_{i})$ such that $\mathcal{A}(t_i)<+\infty$. Moreover,
\bea && \int_{t_{i}}^{t_{i+1}} \mathcal{Q}(t)
 dt\leq \int_{\tilde{t}_{i}}^{\tilde{t}_{i+1}} \mathcal{Q}(t)
 dt\leq \frac{\epsilon}{2}\leq \epsilon,\quad i=0,N,\label{ai1}\\
&& \int_{t_{i}}^{t_{i+1}} \mathcal{Q}(t)
 dt\leq \int_{\tilde{t}_{i-1}}^{\tilde{t}_i} \mathcal{Q}(t)
 dt+\int_{\tilde{t}_{i}}^{\tilde{t}_{i+1}} \mathcal{Q}(t)
 dt\leq \epsilon, \quad \text{for} \ i=1,2,...,N-1.\label{ai2}
\eea 
We can prove the required result by an iteration argument from
$i=0$ to $i=N$. For $i=0$, it follows from the Gronwall inequality
and \eqref{ai1} that
 \bea \mathcal{A}(t) &\leq&
\mathcal{A}(0)\exp\left(
C_*\left[1+\ln\big(e+\displaystyle\mbox{sup}_{[0, t]} \|\Delta
u(\cdot)\|\big)\right]\int_{0}^t  \mathcal{Q}(s)ds \right)\non\\
&\leq&
\mathcal{A}(0)e^{C_*\epsilon}\left(e+\displaystyle\mbox{sup}_{[0,
t]} \|\Delta u(\cdot)\|\right)^{C_*\epsilon},\quad \forall \,t \in
[0, t_1]. \label{kkk}
 \eea
We infer from \eqref{uh2phih5} and \eqref{kkk} that for $t \in [0,
t_1]$, it holds
  \bea
  &&\dfrac{d}{dt}\left(\|\Delta
u\|^2(t)+\eta_1\Big\|\nabla\dfrac{\delta E}{\delta\phi}\Big\|^2(t)
\right) \leq C\left(e+\displaystyle\mbox{sup}_{[0, t]} \|\Delta
u\|(\cdot)\right).
  \label{highest order energy law}
   \eea
Since $\|\Delta u(0)\|$ and $\Big\|\nabla\dfrac{\delta
E}{\delta\phi}\Big\|(0)$ are bounded due to our assumption on the
initial data, integrating \eqref{highest order energy law} from $0$
to $t$, we get
 \bea
 &&\|\Delta
u(t)\|^2+\eta_1\Big\|\nabla\dfrac{\delta E}{\delta\phi}\Big\|^2(t) \non\\
&\leq& \|\Delta u(0)\|^2+\eta_1\Big\|\nabla\dfrac{\delta
E}{\delta\phi}\Big\|^2(0)+CT\left(e+\displaystyle\mbox{sup}_{[0,
t]}\|\Delta u(\cdot)\|\right), \quad \forall\, t\in [0,t_1]. \non
 \eea
Then taking the supremum of both sides for $t\in [0,t_1]$,  we can
see that
 \bea
 &&\displaystyle\mbox{sup}_{[0, t_1]}\left(\|\Delta
u(\cdot)\|^2+\eta_1\Big\|\nabla\dfrac{\delta
E}{\delta\phi}(\cdot)\Big\|^2\right)
 \non\\
 &\leq& \|\Delta u(0)\|^2+\eta_1\Big\|\nabla\dfrac{\delta
E}{\delta\phi}\Big\|^2(0)
 +CT\left(e+\displaystyle\mbox{sup}_{[0,
t_1]}\|\Delta u(\cdot)\|\right)
 \non\\
&\leq& \|\Delta u(0)\|^2+\eta_1\Big\|\nabla\dfrac{\delta
E}{\delta\phi}\Big\|^2(0)+\frac12\displaystyle\mbox{sup}_{[0,
t_1]}\|\Delta u(\cdot)\|^2+C_{T},\label{kk}
 \eea
which indicates that $\|\Delta u\|$ and $\Big\|\nabla\dfrac{\delta
E}{\delta\phi}\Big\|$ are uniformly bounded on $[0, t_1]$.

Then we can repeat the above argument for $i=1,...,N$ such that on
each interval $[t_{i}, t_{i+1}]$, it holds
\be \displaystyle\mbox{sup}_{[t_{i}, t_{i+1}]}\left(\|\Delta
u(\cdot)\|^2+\eta_1\Big\|\nabla\dfrac{\delta
E}{\delta\phi}(\cdot)\Big\|^2\right)
 \leq \|\Delta u(t_{i})\|^2+\eta_1\Big\|\nabla\dfrac{\delta
E}{\delta\phi}\Big\|^2(t_{i})+C_{T},\label{kki}
 \ee
 where the bound of $\|\Delta u(t_{i})\|$, $\Big\|\nabla\dfrac{\delta
E}{\delta\phi}\Big\|(t_{i})$ are given by the estimates in the
previous step on $[t_{i-1}, t_{i}]$. Consequently, we get
 \be \|u\|_{L^\infty(0, T; \mathbf{H}^2_p)}\leq C, \ \ \ \|\phi\|_{L^\infty(0, T;
H^5_p)}\leq C,\non
 \ee
which indicate that $[0, T)$ cannot be the maximal interval of
existence, and the solution $(u, \phi)$ can be extended beyond $T$.

 \textbf{Case} (ii). We re-estimate the terms $I_1$ and $J'_{1f}$ in a different way by  using the uniform estimates \eqref{unilow} and
 \eqref{phih3}. The estimate for $I_1$ can be done as follows:
 \bea I_1&\leq& C\|\nabla \Delta u\|(\|\nabla u\|_{\mathbf{L}^4}^2+\|\Delta u\|_{\mathbf{L}^\frac{2p}{p-2}}\|u\|_{\mathbf{L}^p}) \non\\
&\leq& C\|\nabla \Delta u\|\left(\|\Delta u\|_{\mathbf{L}^\frac{2p}{p-2}}\|u\|_{\mathbf{L}^p}+\|u\|_{\mathbf{L}^p}^2\right) \non\\
&\leq& C\|\nabla \Delta u\|\|\nabla \Delta u\|^\frac3p\|\Delta u\|^{1-\frac3p}\|u\|_{\mathbf{L}^p}\non\\
&\leq& \frac{\mu}{8}\|\nabla\Delta
u\|^2+C\|u\|_{\mathbf{L}^p}^\frac{2p}{p-3}\|\Delta u\|^2. \non
 \eea
Meanwhile, using Lemma \ref{eqes}, we get
 \bea
  J'_{1f}
  &\leq&
C\|u\|_{\mathbf{L}^\infty}^2\|\phi\|_{H^4}^2 \leq
C\|u\|_{\mathbf{L}^\infty}^2\|\phi\|_{H^5}\|\phi\|_{H^3}  \non\\
  &\leq& C(\|
u\|\|\nabla\Delta u\|+\|u\|^2)\left(\Big\|\nabla \dfrac{\delta
E}{\delta\phi}\Big\|+C\right) \non\\
 &\leq& \frac{\mu}{8}\|\nabla\Delta u\|^2+C\left(\Big\|\nabla \dfrac{\delta
E}{\delta\phi}\Big\|^2+\|\nabla u\|^2\right).\non \eea
 The other terms in $I_2$, $J'_1,...,J'_6$ are estimated in the same way as in Case (i). Then we deduce that
 \bea
&& \frac{d}{dt}\left(\|\Delta u\|^2+ \eta_1\Big\|\nabla\dfrac{\delta
E}{\delta\phi}\Big\|^2\right)+\frac{k\varepsilon\gamma\eta_1}{2}\Big\|\nabla\Delta\dfrac{\delta
E}{\delta\phi}\Big\|^2 + \mu\|\nabla \Delta
u\|^2\non\\
&\leq& C\|u\|_{\mathbf{L}^p}^\frac{2p}{p-3}\|\Delta u\|^2+
C\left(\Big\|\nabla \dfrac{\delta E}{\delta\phi}\Big\|^2+\Big\|
\dfrac{\delta
E}{\delta\phi}\Big\|^2+\|\nabla u\|^2\right)\non\\
&\leq& C\left(1+\|u\|_{\mathbf{L}^p}^\frac{2p}{p-3}
\right)\left(e+\|\Delta u\|^2+ \eta_1\Big\|\nabla\dfrac{\delta
E}{\delta\phi}\Big\|^2\right)\non\\
&\leq&
C\frac{1+\|u\|_{\mathbf{L}^p}^\frac{2p}{p-3}}{1+\ln(e+\|u\|_{\mathbf{L}^\infty})}\left[
1+\ln(e+\|\Delta u\|^2)\right]   \left(e+\|\Delta u\|^2+
\eta_1\Big\|\nabla\dfrac{\delta
E}{\delta\phi}\Big\|^2\right),\label{uh2phih5b}
 \eea
 where we have used the Poincar\'e inequality
 $$ \Big\| \dfrac{\delta
E}{\delta\phi}\Big\|^2\leq C\Big\|\nabla\dfrac{\delta
E}{\delta\phi}\Big\|^2+C\Big|\int_Q \dfrac{\delta E}{\delta\phi}
dx\Big|^2\leq C\Big\|\nabla\dfrac{\delta
E}{\delta\phi}\Big\|^2+C(\|\phi\|_{H^2}).$$
 Then we infer from
\eqref{uh2phih5b} and the Gronwall inequality that for all $t\in
[0,T]$,
 \bea
 && \ln\left(1+\ln\Big(e+\|\Delta u(t)\|^2+ \eta_1\Big\|\nabla\dfrac{\delta
E}{\delta\phi}\Big\|^2(t)\Big)\right)\non\\
&\leq& C  \ln\left(1+\ln\Big(e+\|\Delta u_0\|^2+
\eta_1\Big\|\nabla\dfrac{\delta
E}{\delta\phi}\Big\|^2(0)\Big)\right)
\int_0^T\frac{1+\|u(t)\|_{\mathbf{L}^p}^\frac{2p}{p-3}}{1+\ln(e+\|u(t)\|_{\mathbf{L}^\infty})}dt,\non
 \eea
 which together with \eqref{integrability condition2} implies that
 \be \|u\|_{L^\infty(0, T; \mathbf{H}^2_p)}\leq C, \ \ \ \|\phi\|_{L^\infty(0, T;
H^5_p)}\leq C.\non
 \ee
 The proof is complete.
\end{proof}

\section{Stability}\setcounter{equation}{0}

Denote the total energy of the system \eqref{NS}--\eqref{IC}  by
  \[\mathcal{E}(t)=\frac12\|u(t)\|^2+E(\phi(t)).\]
We recall that $\mathcal{E}(t)$ satisfies the basic energy law
\eqref{basic energy law}, which characterizes the dissipative nature
of the problem \eqref{NS}--\eqref{IC}. Inspired by \cite{LL95} for
the liquid crystal system, we can show that if the initial datum is
regular and the total energy $\mathcal{E}(t)$ cannot drop too much
for all time, then our problem \eqref{NS}--\eqref{IC} admits a
unique bounded global strong solution.

\begin{proposition}\label{small data proposition}
Let $n=3$. For any initial data $(u_0, \phi_0)\in
\dot{\mathbf{V}}\times H^4_p$, there exists a constant
$\varepsilon_0 \in(0,1)$, depending on
$\|u_0\|_{\mathbf{H}^1}$,$\|\phi_0\|_{H^4}$ and coefficients of the
system such that either

(i) The problem \eqref{NS}--\eqref{IC} has a unique global strong
solution $(u,\phi)$ with uniform-in-time estimate
 \begin{equation}
 \|u(t)\|_{\mathbf{V}}+\|\phi(t)\|_{\mathbf{H}^4}\leq C,\quad \forall\, t \geq 0,\label{unihigh}
 \end{equation}
or (ii) there is a $T_*\in (0,+\infty)$ such that $\mathcal{E}(T_*)
\leq \mathcal{E}(0)-\varepsilon_0$.
\end{proposition}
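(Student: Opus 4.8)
The plan is a continuation/bootstrap argument resting on two facts already in hand: the basic energy law \eqref{basic energy law}, and the higher-order differential inequality \eqref{higha} from Lemma \ref{highorder1}. The latter applies here with a \emph{fixed} $K$ and $\eta$: since $\phi_0\in H^4_p\subset H^3_p$, Proposition \ref{comp} gives $\|\phi(t)\|_{H^3}+\|\nabla\phi(t)\|_{\mathbf{L}^\infty}\le K$ for all $t\ge 0$, so along the strong solution $\frac{d}{dt}\mathcal{A}(t)\le C_\ast(\mathcal{A}^3(t)+\mathcal{A}(t))$. Let $[0,T_{\max})$ be the maximal existence interval of the local strong solution supplied by Theorem \ref{locstr}. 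By the standard continuation criterion, $T_{\max}<+\infty$ would force $\limsup_{t\to T_{\max}}\mathcal{A}(t)=+\infty$; hence it suffices to prove that, \emph{if} (ii) fails, $\mathcal{A}(t)$ stays bounded on $[0,T_{\max})$ by a constant depending only on $\|u_0\|_{\mathbf{H}^1}$, $\|\phi_0\|_{H^4}$ and the coefficients. This yields $T_{\max}=+\infty$, and then \eqref{unihigh} follows from Lemma \ref{eqes} (which bounds $\|\Delta^2\phi\|$, hence $\|\phi\|_{H^4}$, by $\|\delta E/\delta\phi\|+C$) together with the Poincaré inequality $\|u\|_{\mathbf{V}}\approx\|\nabla u\|$, while uniqueness comes from Theorem \ref{weak} because $u\in L^\infty(0,T;\dot{\mathbf{V}})\subset L^8(0,T;\mathbf{L}^4_p)$. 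Thus (i) holds and the dichotomy is established.

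So assume (ii) fails, i.e. $\mathcal{E}(t)>\mathcal{E}(0)-\varepsilon_0$ for all $t\in(0,T_{\max})$. Integrating \eqref{basic energy law} and using monotonicity of $\mathcal{E}$ gives $\int_0^{T_{\max}}\big(\mu\|\nabla u\|^2+\gamma\|\delta E/\delta\phi\|^2\big)\,dt<\varepsilon_0$, whence $\int_0^{T_{\max}}\mathcal{A}(t)\,dt\le\rho:=C_0\varepsilon_0$ with $C_0=C_0(\mu,\gamma,\eta)$; taking $\varepsilon_0$ small makes $\rho$ as small as we please. Next, comparison with the scalar ODE $Z'=C_\ast(Z^3+Z)$ produces two positive constants: $\theta_0=\theta_0(C_\ast,\mathcal{A}(0))$ with $\mathcal{A}(t)\le 2\mathcal{A}(0)$ on $[0,\theta_0]\cap[0,T_{\max})$, and $\theta_1=\theta_1(C_\ast)$ with the property that $\mathcal{A}(s)\le 1$ at some $s<T_{\max}$ forces $\mathcal{A}(t)\le 2$ on $[s,s+\theta_1]\cap[0,T_{\max})$. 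Here $\mathcal{A}(0)$ is controlled by $\|u_0\|_{\mathbf{H}^1}$ and $\|\phi_0\|_{H^4}$ via Lemma \ref{eqes}. We now fix $\varepsilon_0\in(0,1)$ so small that $\rho\le\min\{\theta_0,\theta_1/2\}$; this is exactly where the claimed dependence of $\varepsilon_0$ on $\|u_0\|_{\mathbf{H}^1}$, $\|\phi_0\|_{H^4}$ and the coefficients enters.

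If $T_{\max}\le\theta_0$, then $\mathcal{A}\le 2\mathcal{A}(0)$ on all of $[0,T_{\max})$, forcing $T_{\max}=+\infty$, a contradiction; hence $T_{\max}>\theta_0$, and since $\int_0^{\theta_0}\mathcal{A}\le\rho\le\theta_0$ there is $s_0\in[0,\theta_0]$ with $\mathcal{A}(s_0)\le\rho/\theta_0\le 1$. Iterate from $s_0$: given $s_k<T_{\max}$ with $\mathcal{A}(s_k)\le 1$ one has $\mathcal{A}\le 2$ on $[s_k,s_k+\theta_1]\cap[0,T_{\max})$; if $s_k+\theta_1\ge T_{\max}$ the covering of $[s_0,T_{\max})$ is complete, and otherwise $\int_{s_k+\theta_1/2}^{s_k+\theta_1}\mathcal{A}\le\rho\le\theta_1/2$ yields $s_{k+1}\in[s_k+\theta_1/2,s_k+\theta_1]$ with $\mathcal{A}(s_{k+1})\le 2\rho/\theta_1\le 1$. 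Since $s_{k+1}-s_k\ge\theta_1/2$ the intervals $[s_k,s_k+\theta_1]$ overlap and their union covers $[s_0,T_{\max})$; combined with the bound on $[0,s_0]\subset[0,\theta_0]$ this gives $\mathcal{A}\le\max\{2\mathcal{A}(0),2\}$ on $[0,T_{\max})$, as required. The only genuinely fiddly point is this last iteration, calibrating the ODE ``recovery time'' $\theta_1$ against the smallness $\rho$ of the accumulated dissipation so that the bound propagates across the a priori unknown interval $[0,T_{\max})$; the analytic heart of the matter, the inequality \eqref{higha} with its cubic right-hand side, is already available.
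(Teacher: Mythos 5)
Your proposal is correct and takes essentially the same route as the paper: both rest on the ODE comparison for $\mathcal{A}(t)$ coming from \eqref{higha} (with $K$, $\eta$, $C_\ast$ fixed via Propositions \ref{low} and \ref{comp}), the observation that failure of (ii) together with the basic energy law \eqref{basic energy law} makes $\int\mathcal{A}\,dt$ small of order $\varepsilon_0$, and an iteration in the spirit of \cite{LL95} that repeatedly locates times where $\mathcal{A}$ is below a fixed threshold and restarts the ODE bound from there. Your write-up merely makes explicit the quantities ($\theta_0$, $\theta_1$, the continuation criterion, and the passage from the $\mathcal{A}$-bound to \eqref{unihigh} via Lemma \ref{eqes}) that the paper leaves as a sketch, so no substantive difference or gap.
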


\begin{proof}
The proof is based on the higher-order differential inequality
\eqref{higha} (cf. Lemma \ref{highorder1}) and an argument similar
to that in \cite{LL95, SW11}. We only sketch it here for readers'
convenience. For any initial data $(u_0, \phi_0)\in
\dot{\mathbf{V}}\times H^4_p$,  let $L$ be a constant such that
$\|\nabla u_0\|^2+\Big\|\dfrac{\delta E}{\delta\phi}(0)\Big\|^2\leq
 L$. It follows from Lemma \ref{eqes} that $\|\phi_0\|_{H^4}$ can be bounded in terms of  $L$ and $\|\phi_0\|_{H^2}$. Then by Propositions \ref{low} and \eqref{comp}, $\|u(t)\|$ and $\|\phi(t)\|_{H^3}$ can be bounded by a constant depending  $L$, $\|\phi_0\|_{H^2}$ and coefficients of the system.
 Then we can fix the constant $\eta$ in the definition of
 $\mathcal{A}(t)$ (cf. \eqref{eta}) and $C_*$ in \eqref{higha}. Consider the ODE problem
 \be
 \frac{d}{dt}Y(t)=C_*[(Y(t))^3+Y(t)],\quad
 Y(0)=\max\{1,\eta\}L\geq \mathcal{A}(0)\label{ODE}
 \ee  The maximal existence time $T_{max}$ of the unique local solution $Y(t)$ is determined by $Y(0)$ and $C_*$. Now we take
  $$t_0=\frac12 T_{max}(Y(0), C_*),\quad \varepsilon_0=\frac{L t_0}{2}\min\left\{\mu,
  \gamma\right\}.$$
  If (ii) is not true, we have $ \mathcal{E}(t) \geq \mathcal{E}(0)-\varepsilon_0$ for all $t\geq0$.
From the basic energy law \eqref{basic energy law}, we infer that
 \be
\int_{\frac{t_0}{2}}^{t_0}\mathcal{A}(t)dt\leq
 \int_0^{\infty}\mathcal{A}(t)dt \leq \kappa\varepsilon_0,\quad
 \text{with}\ \kappa=\max\{1,\eta\}\max\{\mu^{-1}, \gamma^{-1}\}.\non
 \ee
 Hence, there exists a $t_*
\in [\frac{t_0}{2}, t_0]$ such that $\mathcal{A}(t^*)\leq
\frac{2\kappa\varepsilon_0}{t_0}\leq Y(0)$. Take $t_*$ as the
initial time for \eqref{ODE} with $Y(t^*)=Y(0)$, we infer from the
above argument that $Y(t)$ and thus $\mathcal{A}(t)$ remain bounded
at least on $[0, \frac{3t_0}{2}]\subset [0,t_*+t_0]$ with the same
bound as that on $[0,t_0]$ (since the bound for $Y(t)$ is the same).
An iteration argument shows that $\mathcal{A}(t)$ is bounded for all
$t\geq 0$. The proof is complete.
\end{proof}
\begin{corollary}[Eventual regularity of weak solutions in $3D$]\label{evereg}
When $n=3$, let $(u, \phi)$ be a global weak solution of the problem
\eqref{NS}--\eqref{IC}. Then there exists a time $T_0 \in (0,
+\infty)$ such that $(u, \phi)$ becomes a strong solution on $[T_0,
+\infty)$.
\end{corollary}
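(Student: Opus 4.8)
The plan is to restart the fixed global weak solution $(u,\phi)$ from a carefully chosen large time $T_0$ at which it is automatically regular, and then feed this restart into Proposition \ref{small data proposition}. The whole subtlety will be to pick $T_0$ so that the energy gap $\varepsilon_0$ supplied by that proposition can be fixed \emph{uniformly}: a priori $\varepsilon_0$ degrades as the $\dot{\mathbf V}\times H^4_p$-size of the restart datum grows, so merely knowing that $(u,\phi)$ is regular at \emph{some} late times does not suffice.

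First I would record the monotonicity of the total energy: from the integrated energy inequality established in the proof of Proposition \ref{low}, $\mathcal E(t)=\frac12\|u(t)\|^2+E(\phi(t))$ is non-increasing in $t$, and since $E(\phi)\geq0$ it is bounded below by $0$; hence $\mathcal E(t)$ decreases to a limit $\ell\geq0$ as $t\to+\infty$, with $\mathcal E(t)\geq\ell$ for all $t\geq0$. Next I would exploit \eqref{int}. By Chebyshev's inequality the ``good set''
\[
G:=\Big\{t\geq0:\ \mu\|\nabla u(t)\|^2+\gamma\Big\|\tfrac{\delta E}{\delta\phi}(t)\Big\|^2\leq1\Big\}
\]
has complement of finite Lebesgue measure, hence is unbounded. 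For $t\in G$ one has $\|\nabla u(t)\|\leq\mu^{-1/2}$, so $u(t)\in\dot{\mathbf V}$ with $\|u(t)\|_{\mathbf H^1}\leq C$; and $\|\tfrac{\delta E}{\delta\phi}(t)\|\leq\gamma^{-1/2}$, so combining Lemma \ref{eqes} (the bound on $\|\Delta^2\phi\|$) with standard periodic elliptic estimates (in the spirit of \eqref{dh2}) and the uniform bounds of Proposition \ref{low} for $\|\phi(t)\|_{H^2}$ and $|\int_Q\phi(t)\,dx|$, one gets $\phi(t)\in H^4_p$ with $\|\phi(t)\|_{H^4}\leq C$. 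Thus there is a constant $R$, independent of $t$, such that $\|u(t)\|_{\mathbf H^1}+\|\phi(t)\|_{H^4}\leq R$ for every $t\in G$.

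With this uniform bound in hand the proof finishes quickly. Rerunning the argument of Proposition \ref{small data proposition} with the fixed bound $L=R^2$ in place of the data-dependent $L$ used there produces a single $\varepsilon_0\in(0,1)$, depending only on $R$ and the coefficients of the system, which serves as the energy gap in the dichotomy of that proposition for every choice of initial time in $G$. Using that $\mathcal E(t)\downarrow\ell$ and that $G$ is unbounded, I would then pick $T_0\in G$ with $\mathcal E(T_0)<\ell+\varepsilon_0$ and apply Proposition \ref{small data proposition} with initial time $T_0$ and datum $(u(T_0),\phi(T_0))\in\dot{\mathbf V}\times H^4_p$. Its alternative (ii) cannot hold, since a time $T_*>T_0$ with $\mathcal E(T_*)\leq\mathcal E(T_0)-\varepsilon_0<\ell$ would contradict $\mathcal E(T_*)\geq\ell$; hence alternative (i) holds, giving a global strong solution on $[T_0,+\infty)$ obeying \eqref{unihigh}. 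Since that strong solution lies in $L^\infty(T_0,+\infty;\dot{\mathbf V})$, hence in $L^8(T_0,T;\mathbf L^4_p)$ for every finite $T$, the weak--strong uniqueness part of Theorem \ref{weak} identifies it with the restriction of $(u,\phi)$ to $[T_0,+\infty)$; thus $(u,\phi)$ is a strong solution there.

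I expect the main obstacle to be the construction of the uniform bound $R$ on $G$: it is precisely this step that lets $\varepsilon_0$ be chosen once and for all, without which the final dichotomy would break down. It hinges on Lemma \ref{eqes}, which upgrades the $L^2$-in-time control of $\|\tfrac{\delta E}{\delta\phi}\|$ coming from \eqref{int} into a pointwise $H^4$-bound on $\phi$ along $G$. Everything else — the energy monotonicity, the Chebyshev argument for $G$, the invocation of Proposition \ref{small data proposition}, and the weak--strong uniqueness — is routine.
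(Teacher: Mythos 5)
Your overall architecture --- restarting at a late ``good'' time supplied by \eqref{int}, with a uniform $\dot{\mathbf V}\times H^4_p$ bound on the restart data obtained from \eqref{unilow}, Lemma \ref{eqes} and periodic elliptic regularity, so that the constant $\varepsilon_0$ of Proposition \ref{small data proposition} can be fixed once and for all, and a final appeal to the uniqueness part of Theorem \ref{weak} --- is sound and close in spirit to the paper's proof. The genuine gap is your very first step: the claim that $\mathcal E(t)$ is non-increasing along the given global weak solution. What the proof of Proposition \ref{low} provides is only the integrated inequality $\frac12\|u(t)\|^2+E(\phi(t))+\int_0^t\big(\mu\|\nabla u\|^2+\gamma\|\tfrac{\delta E}{\delta\phi}\|^2\big)ds\le \frac12\|u_0\|^2+E(\phi_0)$, i.e.\ a comparison with time $0$ only; this does not imply monotonicity of $\mathcal E$, and for weak solutions of a system containing the three-dimensional Navier--Stokes equations the energy inequality between two arbitrary (or a.e.) later times --- the strong energy inequality --- is neither part of the paper's notion of weak solution nor established anywhere in it. Your argument uses monotonicity in a load-bearing way twice: to produce the limit $\ell$ with $\mathcal E(t)\ge\ell$ for all $t$, and, more importantly, to guarantee that some $T_0\in G$ satisfies $\mathcal E(T_0)<\inf_{t\ge T_0}\mathcal E(t)+\varepsilon_0$, which is exactly what rules out alternative (ii). Without monotonicity, $\mathcal E$ could in principle dip below $\mathcal E(T_0)-\varepsilon_0$ at later times for every $T_0\in G$, and your selection of $T_0$ breaks down.

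The gap is repairable, and the repair is essentially the paper's selection mechanism: instead of asking that $\mathcal E(T_0)$ be within $\varepsilon_0$ of its limit, choose $T_0\in G$ so late that the tail dissipation satisfies $\int_{T_0}^{+\infty}\big(\mu\|\nabla u\|^2+\gamma\|\tfrac{\delta E}{\delta\phi}\|^2\big)dt\le\varepsilon_0$, which follows from \eqref{int} alone. Restart the strong solution at $T_0$; on its maximal interval of existence it coincides with the weak solution by the uniqueness part of Theorem \ref{weak} (there $u\in L^{\infty}(T_0,T;\dot{\mathbf V})\subset L^{8}(T_0,T;\mathbf L^4_p)$), so its dissipation integral is controlled by the same tail, and the basic energy law \eqref{basic energy law} then shows its energy can drop by at most $\varepsilon_0$, ruling out alternative (ii); equivalently, one feeds $\int_{T_0}^{+\infty}\mathcal A(t)\,dt\le\kappa\varepsilon_0$ (with $\kappa$ as in the proof of Proposition \ref{small data proposition}) directly into the iteration argument of that proof. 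With this change --- or, alternatively, if you restrict attention to Galerkin-constructed weak solutions and first prove the energy inequality from a.e.\ initial time --- the rest of your proof (the Chebyshev good set, the uniform bound $R$, the uniform $\varepsilon_0$, and the weak--strong identification) goes through.
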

\begin{proof}
It follows from \eqref{unilow}, \eqref{int} and Lemma \ref{eqes}
that
 there exists a time $T_1>0$ such that $\|u(T_1)\|$, $\|\phi(T_1)\|_{H^4}$ are bounded.
Taking $T_1$ as the initial time, we can fix $L$ in Proposition
\ref{small data proposition} and thus $\varepsilon_0$. \eqref{int}
yields that there exists a $T_0>T_1$ such that
$$\|\nabla u(T_0)\|^2+\Big\|\dfrac{\delta
E}{\delta\phi}(T_0)\Big\|^2\leq
 L, \quad \int_{T_0}^{+\infty}\big(\mu\|\nabla u\|^2+\gamma\Big\|\dfrac{\delta
E}{\delta\phi} \Big\|^2\big)dt
 \leq \varepsilon_0.$$
 Taking $T_0$ as the initial time, we can apply the argument for Proposition \ref{small data proposition} that $(u,\phi)$ will be bounded in $\mathbf{H}^1\times H^4$
 after $T_0$.
\end{proof}
 \begin{definition}
 We say $\phi^*\in H^2_p$ is a local minimizer of the elastic energy
 $E(\phi)$, if there exists a $\delta>0$,  $E(\phi^*) \leq E (\phi)$ for all $\phi\in H^2_p$ satisfying $\|\phi-\phi^*\|_{H^2}<\delta$. If for all $\phi\in H^2_p$, $E(\phi^*) \leq E (\phi)$, then $\phi^*$ is an absolute minimizer.
 \end{definition}
\begin{lemma}
  Let $\mathcal{B}$ be a bounded closed convex subset of $H^2_p$. The approximate elastic energy  $E(\phi)$
   admits at least one minimizer $\phi^*\in \mathcal{B}$ such that $E(\phi^*)= \displaystyle{\inf_{\phi\in \mathcal{B}}}E(\phi)$.
  \end{lemma}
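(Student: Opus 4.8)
The plan is to invoke the direct method of the calculus of variations. First I would observe that $E(\phi)\ge 0$ for all $\phi\in H^2_p$: indeed $E_\varepsilon(\phi)=\frac{k}{2\varepsilon}\|f(\phi)\|^2\ge 0$ and the two penalty terms $\frac12 M_1(A(\phi)-\alpha)^2$, $\frac12 M_2(B(\phi)-\beta)^2$ are nonnegative. Hence $\mathfrak{m}:=\inf_{\phi\in\mathcal{B}}E(\phi)$ is a well-defined finite number, and we may choose a minimizing sequence $\{\phi_n\}\subset\mathcal{B}$ with $E(\phi_n)\to\mathfrak{m}$. Since $\mathcal{B}$ is bounded in the reflexive space $H^2_p$, after passing to a subsequence we have $\phi_n\rightharpoonup\phi^*$ weakly in $H^2_p$; because $\mathcal{B}$ is convex and strongly closed it is weakly closed, so $\phi^*\in\mathcal{B}$.

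Next I would upgrade the mode of convergence. In the three-dimensional periodic setting the embeddings $H^2_p\hookrightarrow H^1_p$ and $H^2_p\hookrightarrow L^\infty$ are compact, so along a further subsequence $\phi_n\to\phi^*$ strongly in $H^1_p$ and uniformly on $Q$, with $\sup_n\|\phi_n\|_{L^\infty}<\infty$. From this all the \emph{lower-order} quantities pass to the limit: the linear functional $A$ is continuous on $L^2$, hence $A(\phi_n)\to A(\phi^*)$; using $\nabla\phi_n\to\nabla\phi^*$ in $\mathbf{L}^2$ together with the uniform bound and uniform convergence one gets $\frac{\varepsilon}{2}\|\nabla\phi_n\|^2\to\frac{\varepsilon}{2}\|\nabla\phi^*\|^2$ and $(\phi_n^2-1)^2\to((\phi^*)^2-1)^2$ in $L^1$, whence $B(\phi_n)\to B(\phi^*)$; consequently the two penalty terms of $E$ converge to their values at $\phi^*$. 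For the leading term I would split $f(\phi_n)=-\varepsilon\Delta\phi_n+\frac{1}{\varepsilon}(\phi_n^2-1)\phi_n$: the first summand converges weakly in $L^2$ since $\Delta\phi_n\rightharpoonup\Delta\phi^*$, while the second converges \emph{strongly} in $L^2$ by the uniform convergence of $\phi_n$ and the $L^\infty$-bound. Adding, $f(\phi_n)\rightharpoonup f(\phi^*)$ weakly in $L^2$, and weak lower semicontinuity of the $L^2$-norm yields $\|f(\phi^*)\|\le\liminf_n\|f(\phi_n)\|$, i.e. $E_\varepsilon(\phi^*)\le\liminf_n E_\varepsilon(\phi_n)$.

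Putting the pieces together gives $E(\phi^*)\le\liminf_n E(\phi_n)=\mathfrak{m}$, while $\phi^*\in\mathcal{B}$ forces $E(\phi^*)\ge\mathfrak{m}$; hence $E(\phi^*)=\mathfrak{m}=\inf_{\phi\in\mathcal{B}}E(\phi)$, which is the claim. The one step deserving genuine care — and what I regard as the heart of the argument — is the weak lower semicontinuity of $E_\varepsilon$, because $f$ couples the highest-order term $\Delta\phi$, which is only weakly convergent, with the cubic nonlinearity; the device is precisely to isolate the cubic term and upgrade it to strong $L^2$-convergence via the compact embedding $H^2_p\hookrightarrow\hookrightarrow L^\infty$ before invoking lower semicontinuity of the norm. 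Everything else is a routine continuity argument for the lower-order terms, and no structure of $\mathcal{B}$ beyond boundedness, convexity and closedness is used.
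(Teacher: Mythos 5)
Your proof is correct and follows essentially the same route as the paper: the direct method, with a minimizing sequence weakly convergent in $H^2_p$, compact embeddings into $H^1_p$ and $L^\infty$ to pass to the limit in the lower-order terms, and weak lower semicontinuity for the highest-order part. The only cosmetic difference is that the paper expands $E(\phi)=\frac{k\varepsilon}{2}\|\Delta\phi\|^2+F(\phi)$ and applies lower semicontinuity to $\|\Delta\phi\|^2$, whereas you keep $\|f(\phi)\|^2$ intact and show $f(\phi_n)\rightharpoonup f(\phi^*)$ weakly in $L^2$ (and you make explicit the weak closedness of $\mathcal{B}$, which the paper leaves implicit).
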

 \begin{proof}
 Since $E(\phi)\geq 0$ for all $\phi\in \mathcal{B}$ and $\displaystyle{\lim_{\phi\in \mathcal{B},\ \|\phi\|_{H^2}\to+\infty}} E(\phi)= +\infty$, $E(\phi)$ has a bounded minimizing sequence $\phi_n\in \mathcal{B}$ such that
     \be
     E(\phi_n) \rightarrow \displaystyle{\inf_{\phi\in \mathcal{B}}}E(\phi). \label{2.5}
     \ee
Recalling the definition of $E(\phi)$ \eqref{modified energy}, we
can rewrite $E$ in the following form:
    \be
    E(\phi)=\frac{k\varepsilon}{2}\|\Delta \phi \|^2+F(\phi)\non
    \ee
    with
    \bea
    F(\phi)&=&\frac{k}{\varepsilon}\int_Q \nabla \phi\cdot \nabla (\phi^3-\phi) dx+ \frac{k}{2\varepsilon^3}\int_Q (\phi^3-\phi)^2 dx\non\\
    &&\ \ +\frac12M_1(A(\phi)-\alpha)^2+\frac12M_2(B(\phi)-\beta)^2.\non
    \eea
    Since  $\phi_n$ is bounded in $H^2$, there is a subsequence, still denoted by $\phi_n$,
       such that $\phi_n$ weakly converges to a certain function $\phi^*$ in $H^2$. We infer from the compact Sobolev embedding theorem ($n=3$) that
    $\phi_n$ strongly converges to $\phi^*$ in $L^\infty$ and $H^1$. It turns out that $F(\phi_n) \rightarrow
    F(\phi^*)$. Since $\|\Delta \phi\|^2$ is weakly lower semi-continuous, it follows from
\eqref{2.5} that $E(\phi^*)=\displaystyle{\inf_{\phi\in
\mathcal{B}}}E(\phi)$. Using the elliptic estimate and a bootstrap
argument, we see that the minimizer $\phi^*$ is in fact smooth. The proof is complete.
 \end{proof}
 \begin{remark}
 If $\phi$ is a minimizer of $E(\phi)$, then it is a
critical point of $E(\phi)$. It is easy to verify that any critical
point of $E(\phi)$ in $H^2_p$
 is equivalent to
   a weak solution to the forth-order nonlocal elliptic problem
 \be
 \dfrac{\delta E}{\delta\phi}=0, \quad \text{with} \ \phi(x+e_i)=\phi(x).
 \label{se}
  \ee
\end{remark}
In order to prove our stability result, we introduce the following \L
ojasiewicz--Simon type inequality whose proof is postponed to Section 6.2.
 \bl
   \label{ls}
   Suppose $n=3$. Let $\psi$ be a critical point of the elastic energy $E$.
   There exist constants $\beta>0$, $\theta\in(0, \frac12)$
   depending on $\psi$ such that for any $\phi\in H^4_p(Q)$
   with $\|\phi-\psi\|_{H^2}<\beta$, it holds
   \be
   \Big\|\dfrac{\delta
E}{\delta\phi}\Big\| \geq  |E(\phi)-E(\psi)|^{1-\theta}.
   \label{LoSi}
   \ee
\el
Now we state the main result of this section.
\begin{theorem}\label{theorem on near equilibrium case}
 Let $\phi^\ast\in H_p^4(Q)$ be a local minimizer of $E(\phi)$. For any $R>0$, consider the initial data
 $$
 (u_0, \phi_0)\in \mathbf{B}=\{(u, \phi) \in \dot{\mathbf{V}}\times H^4_p(Q): \  \|u\|_{\mathbf{H}^1}\leq R, \ \|\phi_0-\phi^*\|_{H^4}\leq R\}.
 $$
 For any $\epsilon>0$, there exists a constant $\sigma\in (0,\delta)$ that may depend on
$\phi^\ast$, $R$, $\epsilon$ and coefficients of the system such
that if the initial data $(u_0, \phi_0)\in \mathbf{B}$ satisfies the
condition \be \|u_0\|+\|\phi_0-\phi^\ast\|_{H^2}\leq\sigma,
\label{small} \ee then the problem
 \eqref{NS}--\eqref{IC} admits a unique global strong solution satisfying
 \be
 \|\phi(t)-\phi^\ast\|_{H^2}\leq\epsilon,\quad \forall \,t\geq0. \label{Lya}
 \ee
 \end{theorem}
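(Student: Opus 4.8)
The plan is to combine the higher-order differential inequality \eqref{higha} from Lemma \ref{highorder1} with the \L ojasiewicz--Simon inequality \eqref{LoSi} in a continuation/bootstrap argument, following the strategy of Lin--Liu and its adaptations to phase-field systems. First I would fix, for the given $R$, the constant $K$ from Proposition \ref{comp} (valid since $\phi_0\in H^4_p\subset H^3_p$ and $\|\phi_0\|_{H^3}$ is controlled by $R$), and correspondingly fix $\eta$ as in \eqref{eta} and $C_\ast$ in \eqref{higha}. These depend only on $\phi^\ast$, $R$ and the coefficients. Since $\phi^\ast$ is a critical point of $E$, Lemma \ref{ls} supplies constants $\beta>0$ and $\theta\in(0,\frac12)$ depending on $\phi^\ast$; I would shrink $\sigma$ below $\beta$ (and below $\delta$) so that the Lojasiewicz inequality applies as long as $\|\phi(t)-\phi^\ast\|_{H^2}<\beta$.

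The core is a differential inequality for the ``distance-to-equilibrium'' energy. Set $\mathcal G(t)=\frac12\|u(t)\|^2 + E(\phi(t)) - E(\phi^\ast)\ge 0$ (nonnegativity holds while $\phi(t)$ stays in the $H^2$-neighborhood where $\phi^\ast$ is a local minimizer). By the basic energy law \eqref{basic energy law},
\[
\frac{d}{dt}\mathcal G(t) = -\mu\|\nabla u\|^2 - \gamma\Big\|\frac{\delta E}{\delta\phi}\Big\|^2 \le 0.
\]
Using the Poincar\'e inequality $\|u\|\le C\|\nabla u\|$ (valid by Lemma \ref{constant velocity mean} and the zero-mean assumption) together with \eqref{LoSi}, the dissipation dominates $\mathcal G$ up to a power: while $\mathcal A(t)$ stays small one has $\frac12\|u\|^2\le C\|\nabla u\|^2$ and $E(\phi)-E(\phi^\ast)\le C\|\frac{\delta E}{\delta\phi}\|^{1/(1-\theta)}$, hence $\mathcal G(t)^{2(1-\theta)}\le C\big(\mu\|\nabla u\|^2+\gamma\|\frac{\delta E}{\delta\phi}\|^2\big)= -C\frac{d}{dt}\mathcal G(t)$. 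Since $2(1-\theta)>1$, this differential inequality is integrable: $\mathcal G(t)\to 0$ and, more importantly, $\int_0^\infty \big(\mu\|\nabla u\|^2+\gamma\|\frac{\delta E}{\delta\phi}\|^2\big)^{1/2}\,dt<\infty$ with a bound that is $O(\mathcal G(0)^{\theta})=O(\sigma^{2\theta})$, small. This $L^1$-in-time bound on $\|\frac{\delta E}{\delta\phi}\|$ controls $\frac{d}{dt}\|\phi-\phi^\ast\|_{H^2}$ (via the phase equation \eqref{phasefield} and $\phi_t = -\gamma\frac{\delta E}{\delta\phi}-u\cdot\nabla\phi$, using Lemma \ref{eqes} and the already-established $H^3$ bound on $\phi$), so $\|\phi(t)-\phi^\ast\|_{H^2}$ cannot travel far: it stays below $\epsilon$ for all $t$ if $\sigma$ is chosen small enough.

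To make this rigorous one runs a standard continuation argument: let $[0,T_{\max})$ be the maximal interval on which the local strong solution (Theorem \ref{locstr}) exists and satisfies $\|\phi(t)-\phi^\ast\|_{H^2}<\beta$ and $\mathcal A(t)\le 1$ (say). On this interval the hypothesis \eqref{K} of Lemma \ref{highorder1} holds, so \eqref{higha} gives $\frac{d}{dt}\mathcal A\le C_\ast(\mathcal A^3+\mathcal A)$; combined with $\int_0^{T_{\max}}\mathcal A\,dt \le C\int_0^{T_{\max}}\big(\mu\|\nabla u\|^2+\gamma\|\frac{\delta E}{\delta\phi}\|^2\big)dt \le C\mathcal G(0)=O(\sigma^2)$ and a uniform Gronwall argument as in the proof of Proposition \ref{small data proposition}, one gets $\mathcal A(t)\le C\sigma$ for all $t<T_{\max}$, in particular $\mathcal A(t)< 1$; and the Lojasiewicz-driven estimate above gives $\|\phi(t)-\phi^\ast\|_{H^2}\le \|\phi_0-\phi^\ast\|_{H^2}+C\sigma^{2\theta}<\epsilon<\beta$. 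Hence neither of the two confining conditions can be saturated at $T_{\max}$, forcing $T_{\max}=+\infty$; the solution is global, uniformly bounded in $\mathbf H^1\times H^4$, and satisfies \eqref{Lya}. Uniqueness follows from $u\in L^\infty(0,\infty;\dot{\mathbf V})\subset L^8_{loc}(\mathbf L^4_p)$ via Theorem \ref{weak}.

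The main obstacle is closing the loop between the two smallness requirements. One must choose $\sigma$ so that simultaneously (a) $\mathcal A(t)$ is kept small enough that assumption \eqref{K} and the hypotheses of Lemma \ref{highorder1} persist for all time, and (b) $\|\phi(t)-\phi^\ast\|_{H^2}$ is kept inside the Lojasiewicz radius $\beta$ and below $\epsilon$ — and these are coupled, since the Lojasiewicz inequality is only available while (b) holds, yet (b) is proved using the time-integrability of the dissipation which comes from the Lojasiewicz inequality. Resolving this requires the continuation argument above: one verifies that on the maximal ``good'' interval the bounds actually improve (strict inequalities), so the good interval is open and closed, hence all of $[0,\infty)$. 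A secondary technical point is that the nonnegativity of $\mathcal G$ and the applicability of \eqref{LoSi} both rely on $\phi(t)$ remaining in the $H^2$-neighborhood of the \emph{local} minimizer $\phi^\ast$; since $\phi^\ast$ is only a local minimizer, $E(\phi^\ast)\le E(\phi)$ is guaranteed only there, which is exactly why the $H^2$-closeness \eqref{Lya} must be part of the bootstrap rather than a mere consequence.
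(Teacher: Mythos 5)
Your Lojasiewicz half is essentially the paper's own argument: the inequality $-\frac{d}{dt}\big(\mathcal{E}(t)-E(\phi^*)\big)^{\theta}\geq c\big(\|\nabla u\|+\big\|\tfrac{\delta E}{\delta\phi}\big\|\big)$ on the interval where $\|\phi-\phi^*\|_{H^2}$ stays below the Lojasiewicz radius, the resulting $L^1$-in-time bound on $\|\phi_t\|$ of size $O(\|u_0\|^{2\theta}+\|\phi_0-\phi^*\|_{H^2}^{\theta})$, interpolation of the $H^2$-distance between the $L^2$-distance and the uniform $H^3$ bound, and a contradiction at the first exit time are exactly \eqref{intb}--\eqref{difff}. (One small omission there: your differential inequality needs $\mathcal{E}(t)>E(\phi^*)$; the degenerate case $\mathcal{E}(t_*)=E(\phi^*)$ must be treated separately, as the paper does, by showing the solution becomes stationary from $t_*$ on.)

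The genuine gap is in how you close the higher-order estimate. You impose the bootstrap constraint $\mathcal{A}(t)\le 1$ and conclude $\mathcal{A}(t)\le C\sigma$ from $\int\mathcal{A}\,dt=O(\sigma)$ by ``uniform Gronwall''. But the hypothesis \eqref{small} only makes $\|u_0\|_{L^2}$ and $\|\phi_0-\phi^*\|_{H^2}$ small, while $\|\nabla u_0\|$ and $\|\phi_0-\phi^*\|_{H^4}$ are merely bounded by the arbitrary $R$; hence $\mathcal{A}(0)=\|\nabla u_0\|^2+\eta\big\|\tfrac{\delta E}{\delta\phi}(0)\big\|^2$ can be of order $R^2\gg 1$ no matter how small $\sigma$ is, so your bootstrap set may be empty already at $t=0$ and the asserted smallness of $\mathcal{A}$ is false on an initial layer (uniform Gronwall only controls $\mathcal{A}(t)$ by the window average for $t\geq r$, not for $t<r$). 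Smallness of $\mathcal{A}$ is in fact neither available nor needed: what the confinement estimate should deliver is that the total energy cannot drop by more than $\varepsilon_0$ (this is \eqref{Ediff} together with \eqref{cod} in the paper, and your bound on $\|\phi(t)-\phi_0\|_{H^2}$ gives exactly that), and then one invokes the dichotomy of Proposition \ref{small data proposition}, whose proof is not a Gronwall argument but an ODE comparison of \eqref{higha} with $Y'=C_*(Y^3+Y)$, $Y(0)\geq\mathcal{A}(0)$, on windows of fixed length $t_0$, using the small time-integral of $\mathcal{A}$ only to find, in each window, a time at which $\mathcal{A}$ dips back below $Y(0)$ so the bound can be restarted. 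Replacing your uniform-Gronwall/smallness step by this mechanism (i.e., proving \eqref{cod} and citing Proposition \ref{small data proposition}) repairs the proof; as written, the step ``$\mathcal{A}(t)\le C\sigma<1$ for all $t<T_{\max}$'' fails. A minor further slip: $\mathcal{G}(0)$ is only $O(\sigma)$, not $O(\sigma^2)$, since $E(\phi_0)-E(\phi^*)$ is controlled linearly by $\|\phi_0-\phi^*\|_{H^2}$ as in \eqref{F1}--\eqref{F3}; this does not affect the scheme.
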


\begin{proof}
 If $\|u_0\|_{\mathbf{H}^1}\leq R$ and $\|\phi_0-\phi^*\|_{H^4}\leq R$, then the constant $\varepsilon_0$ in
  Proposition \ref{small data proposition} depends on $\phi^*$, $R$ and coefficients of the system.
  It follows from Proposition \ref{low} and Proposition \ref{comp} that $\|u(t)\|$ and
 $\|\phi(t)\|_{H^3}$ are uniformly bounded (by a constant depending on $\phi^*$, $R$ and coefficients of the system).
  In what follows, we denote by $C$, $C_i$ generic constants that only depend on $R$, $\phi^*$
 and coefficients of the system.

By a direction computation, we get
 \bea
 E(\phi_0)-E(\phi(t))&=& [E_\varepsilon(\phi_0)-E_\varepsilon(\phi_0)]+\frac12M_1[(A(\phi_0)-\alpha)^2-(A(\phi)-\alpha)^2] \non\\
 &&+\frac12M_2[(B(\phi_0)-\beta)^2-(B(\phi)-\beta)^2]\non\\
 &:=& F_1+F_2+F_3,\non
 \eea
where 
\bea F_1&\leq&
C\big(\|\Delta\phi_0-\Delta\phi\|+\|(\phi_0-\phi)(\phi_0^2+\phi^{2}+\phi_0\phi+1) \| \big)  \non\\
&\leq& C\big(\|\Delta\phi_0-\Delta\phi \|+\|\phi_0-\phi
\|\|\phi_0^2+\phi^{2}+\phi_0\phi+1\|_{L^\infty} \big)
\non\\
&\leq& C\|\phi_0-\phi\|_{H^2}, \label{F1}
 \eea
 \be
  F_2 \leq C\|\phi_0-\phi\|_{L^1}(\|\phi_0+\phi\|_{L^1}+2|\alpha| \Big) \leq C\|\phi_0-\phi\|,  \label{F2} 
  \ee
  \bea
F_3&\leq&C\|\nabla\phi_0+\nabla\phi\|\|\nabla\phi_0-\nabla\phi\|
+C\|\phi_0+\phi\|\|\phi_0^2+\phi^{2}-2\|_{L^\infty}\|\phi_0-\phi\|   \non\\
&\leq& C\|\phi_0-\phi\|_{H^1}. \label{F3}
 \eea
 Since the total energy $\mathcal{E}$ is decreasing in time, we infer from the above estimate that
 \bea
 0\leq
 \mathcal{E}(0)-\mathcal{E}(t)&=&\frac12\|u_0\|^2-\frac12\|u(t)\|^2+E(\phi_0)-E(\phi(t))
 \non\\
 &\leq& \frac12\|u_0\|^2+E(\phi_0)-E(\phi(t))\non\\
 &\leq& \frac12\|u_0\|^2+C_1\|\phi(t)-\phi_0\|_{H^2}.\label{Ediff}
 \eea
 Let $\beta$ denote the constant in Lemma \ref{ls} that depends only on $\psi=\phi^*$.
 Denote
 $$\varpi=\min\left\{1, \varepsilon_0^\frac12, \epsilon, \frac{\delta}{2}, \frac{\beta}{2}, \frac{3\varepsilon_0}{4C_1}\right\}.$$

 We assume that $\sigma\leq \frac14\varpi$. Let $\tilde{T}$ be the smallest finite time for which
 $\|\phi(\tilde{T})-\phi^*\|_{H^2}\geq \varpi$. Then by the proof of Proposition \ref{small data proposition},
 the problem admits a bounded strong solution on $[0,\tilde{T})$.
 If there exists $t_{*}\in (0,\tilde{T})$ such that
 $\mathcal{E}(t_{*})=E(\phi^*)$, since $\phi^*$ is the local
 minimizer and $\|\phi(t_{*})-\phi^*\|_{H^2}<\varpi<\delta$,
 we deduce from \eqref{basic energy law} that $\|\nabla u(t)\|=
\Big\|\dfrac{\delta E}{\delta\phi}(t)\Big\|= 0$ for $t\geq t_*$. It
follows from \bea
 \|\phi_t\|&\leq& \|u\cdot\nabla \phi\|+\gamma \Big\|\dfrac{\delta E}{\delta\phi}\Big\|\leq C\|\nabla u\|\|\nabla \phi\|_{\mathbf{L}^3}+\gamma \Big\|\dfrac{\delta E}{\delta\phi}\Big\|\non\\
 &\leq& C\left(\|\nabla u\|+\Big\|\dfrac{\delta E}{\delta\phi}\Big\|\right)\label{dt}
 \eea
that for $t\geq t_*$, $\|\phi_t\| =0$. Namely, $\phi$ is independent
of time for $t\geq t_*$. As a result, $u(t_{*})=0$ and
$\phi(t_{*})=\phi^{**}$, where
 $\phi^{**}$ is also a local minimizer (but possibly different from
 $\phi^*$). Due to the uniqueness of strong solution, the evolution starting from $t_{*}$ will be
 stationary. The proof is complete in this case.

 We proceed to work with the case that $\mathcal{E}(t)>E(\phi^*)$ for all $t\in [0,\tilde{T})$.
 From the definition of $\tilde{T}$, we see that the conditions in Lemma \ref{ls} are satisfied with $\psi=\phi^*$, on the interval
 $[0,\tilde{T})$. Consequently,
 \bea
 -\frac{d}{dt}(\mathcal{E}(t)-E(\phi^*))^\theta \geq
 \frac{\mu\|\nabla u\|^2+\gamma\Big\|\dfrac{\delta E}{\delta\phi}\Big\|^2}{\frac12\|u\|^{2(1-\theta)}+ \Big\|\dfrac{\delta E}{\delta\phi}\Big\|}
  \geq C\left(\|\nabla u\|+\Big\|\dfrac{\delta E}{\delta\phi}\Big\|\right), \quad \forall\  t\in [0,\tilde{T}).\label{intb}
 \eea
 We infer from \eqref{dt} that
 \bea
 \int_0^{\tilde{T}}\|\phi_t(t)\|dt &\leq&
 C(\mathcal{E}(0)-E(\phi^*))^\theta
 \leq C(\|u_0\|^{2\theta}+
 |E(\phi_0)-E(\phi^*)|^\theta)\non\\
 &\leq& C \left(\|u_0\|^{2\theta}+\|\phi_0-\phi^*\|_{H^2}^\theta\right),\non
 \eea
 which implies that
 \bea
 && \|\phi(\tilde{T})-\phi^*\|_{H^2}\non\\
 &\leq&
 \|\phi(\tilde{T})-\phi_0\|_{H^2}+\|\phi_0-\phi^*\|_{H^2}\non\\
 &\leq&
 C\|\phi(\tilde{T})-\phi_0\|_{H^3}^\frac23\|\phi(\tilde{T})-\phi_0\|^\frac13+\|\phi_0-\phi^*\|_{H^2}\non\\
 &\leq& C\left( \int_0^{\tilde{T}}\|\phi_t(t)\|dt
 \right)^\frac13+\|\phi_0-\phi^*\|_{H^2}\non\\
 &\leq& C_2 \left(\|u_0\|^{\frac{2\theta}{3}}+
 \|\phi_0-\phi^*\|_{H^2}^\frac{\theta}{3}\right)+\|\phi_0-\phi^*\|_{H^2}.\label{difff}
 \eea
Taking
 \be
 \sigma\leq\min\left\{\frac{\varpi}{4},
 \Big(\frac{\varpi}{4C_2}\Big)^\frac{3}{\theta}\right\},\label{ssig}
 \ee
 we easily deduce from \eqref{difff} that
 $\|\phi(\tilde{T})-\phi^*\|_{H^2}\leq \frac34\varpi<\varpi$, which leads to
 a contradiction with the definition of $\tilde{T}$. Hence, $\tilde{T}=+\infty$ and
 there holds
 \be
 \|\phi(t)-\phi^*\|_{H^2}\leq \varpi\leq \epsilon,\quad \forall \ t\geq 0.\label{close1}
 \ee
 Therefore,
 \be
 \|\phi(t)-\phi_0\|_{H^2}\leq
 \|\phi(t)-\phi^*\|_{H^2}+\|\phi^*-\phi_0\|_{H^2}\leq \varpi+
 \sigma\leq \frac54\varpi.
 \ee
 then it follows from \eqref{Ediff} that
 \be
 \mathcal{E}(t)\geq \mathcal{E}(0)-\varepsilon_0,\quad \forall \, t\geq
 0.\label{cod}
 \ee
 By Proposition \ref{small data proposition}, we see that \eqref{NS}--\eqref{IC} admits a unique global strong solution that satisfies \eqref{Lya}. The proof is complete.
 \end{proof}

 \begin{corollary}
 Assume that the assumptions of Theorem \ref{theorem on near equilibrium case} are satisfied. The global strong solution $(u, \phi)$ has the
 following property:
 \be \lim_{t\rightarrow +\infty}
 (\|u(t)\|_{\mathbf{H}^1}+\|\phi(t)-\phi_\infty\|_{H^4})=0,\label{cgce}
 \ee
 where $\phi_\infty\in H^4_p$ is a solution to \eqref{se} such that $E(\phi^*)=E(\phi_\infty)$.
 Moreover, there exists a positive constant $C$ depending on
 $u_0, \phi_0$ and coefficients of the system such that
 \be
 \|u(t)\|_{\mathbf{H}^1}+\|\phi(t)-\phi_\infty\|_{H^4}\leq C(1+t)^{-\frac{\theta'}{(1-2\theta')}}, \quad \forall\, t \geq
 0.\label{rate}
 \ee
$\theta' \in (0,\frac12)$ is the \L ojasiewicz exponent in Lemma
\ref{ls} depending on $\phi_\infty$.
 \end{corollary}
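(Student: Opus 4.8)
The plan is to run the standard \L ojasiewicz--Simon argument for gradient-like flows, feeding Lemma \ref{ls} into the higher-order inequality \eqref{higha} of Lemma \ref{highorder1}. Throughout, $(u,\phi)$ denotes the global strong solution furnished by Theorem \ref{theorem on near equilibrium case}; by Proposition \ref{small data proposition} it is uniformly bounded in $\dot{\mathbf{V}}\times H^4_p$, and $\|\phi(t)-\phi^\ast\|_{H^2}\le\varpi<\beta$ for all $t\ge0$, where $\beta$ is the \L ojasiewicz radius of Lemma \ref{ls} at $\psi=\phi^\ast$.

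I would first establish \eqref{cgce}. The proof of Theorem \ref{theorem on near equilibrium case} already gives $\int_0^{+\infty}\|\phi_t(t)\|\,dt<+\infty$ (now with $\tilde T=+\infty$), so $\phi(t)$ is Cauchy in $L^2_p$ and converges to some $\phi_\infty\in L^2_p$; since $\{\phi(t)\}$ is bounded in $H^4_p$ we in fact get $\phi(t)\rightharpoonup\phi_\infty$ weakly in $H^4_p$ and strongly in $H^s_p$ for every $s<4$. Next, since $\mathcal{A}\in L^1(0,+\infty)$ by \eqref{int}, $\frac{d}{dt}\mathcal{A}(t)\le C$ by \eqref{higha} and the boundedness of $\mathcal{A}$, and a nonnegative $L^1(0,+\infty)$ function with derivative bounded above must tend to $0$, we obtain $\mathcal{A}(t)\to0$; in particular $\|\nabla u(t)\|\to0$ (hence $\|u(t)\|_{\mathbf{H}^1}\to0$ by Poincar\'{e}) and $\|\frac{\delta E}{\delta\phi}(t)\|\to0$. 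Writing $\frac{\delta E}{\delta\phi}=k\varepsilon\Delta^2\phi+H(\phi)$ as in Lemma \ref{eqes} and letting $t\to+\infty$ (using the continuity of $H$ from $H^2_p$ to $L^2_p$) gives $k\varepsilon\Delta^2\phi_\infty+H(\phi_\infty)=0$, i.e.\ $\phi_\infty$ solves \eqref{se}, as well as $\Delta^2\phi(t)\to\Delta^2\phi_\infty$ in $L^2_p$; the periodic elliptic estimate (the biharmonic analogue of \eqref{dh2}) then upgrades the convergence to $\phi(t)\to\phi_\infty$ in $H^4_p$. Finally $\phi_\infty$ is a critical point with $\|\phi_\infty-\phi^\ast\|_{H^2}\le\varpi<\beta$, so applying Lemma \ref{ls} at $\psi=\phi^\ast$ to $\phi=\phi_\infty$ forces $|E(\phi_\infty)-E(\phi^\ast)|^{1-\theta}\le\|\frac{\delta E}{\delta\phi}(\phi_\infty)\|=0$, whence $E(\phi_\infty)=E(\phi^\ast)$.

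For the rate, set $\mathcal{H}(t):=\mathcal{E}(t)-E(\phi_\infty)$, which by \eqref{basic energy law} is non-increasing, tends to $0$, satisfies $-\frac{d}{dt}\mathcal{H}=\mu\|\nabla u\|^2+\gamma\|\frac{\delta E}{\delta\phi}\|^2$, and obeys $\mathcal{H}(t)\ge\frac12\|u(t)\|^2\ge0$ since $\phi^\ast$ is a local minimizer and $E(\phi^\ast)=E(\phi_\infty)$. (If $\mathcal{H}$ vanishes at a finite time the solution is thereafter stationary by the dichotomy in the proof of Theorem \ref{theorem on near equilibrium case}, and the claim is trivial, so assume $\mathcal{H}>0$.) Choose $T_1$ so large that for $t\ge T_1$ the quantity $\|\phi(t)-\phi_\infty\|_{H^2}$ lies below the \L ojasiewicz radius of Lemma \ref{ls} at $\psi=\phi_\infty$ --- whose exponent $\theta'$ we may moreover take to satisfy $\theta'\le\frac14$, since \eqref{LoSi} persists with any smaller exponent near a critical point --- and $\mathcal{A}(t)$ is small. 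On $[T_1,+\infty)$, combining $E(\phi(t))-E(\phi_\infty)\le\|\frac{\delta E}{\delta\phi}(t)\|^{1/(1-\theta')}$ (Lemma \ref{ls}) with $\|u\|^2\le C\|\nabla u\|^2$ (Poincar\'{e}), and separating whether the kinetic or the elastic term carries at least half of $\mathcal{H}(t)$, one obtains $-\frac{d}{dt}\mathcal{H}\ge c\,\mathcal{H}^{2(1-\theta')}$, whence $\mathcal{H}(t)\le C(1+t)^{-1/(1-2\theta')}$ by integration. Repeating the computation of \eqref{intb} with $\phi^\ast$ replaced by $\phi_\infty$ (and $\theta$ by $\theta'$) gives $-\frac{d}{dt}\mathcal{H}(t)^{\theta'}\ge C\big(\|\nabla u(t)\|+\|\frac{\delta E}{\delta\phi}(t)\|\big)\ge C'\|\phi_t(t)\|$ on $[T_1,+\infty)$; integrating from $t$ to $+\infty$ yields $\|\phi(t)-\phi_\infty\|\le C\mathcal{H}(t)^{\theta'}\le C(1+t)^{-\theta'/(1-2\theta')}$, and $\|u(t)\|^2\le2\mathcal{H}(t)$ gives the same bound for $\|u(t)\|$.

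It remains to bootstrap to the $\mathbf{H}^1\times H^4$ norm. Since $\mathcal{A}(t)\le C\big(\mu\|\nabla u(t)\|^2+\gamma\|\frac{\delta E}{\delta\phi}(t)\|^2\big)=-C\frac{d}{dt}\mathcal{H}(t)$, we have $\int_t^{+\infty}\mathcal{A}(s)\,ds\le C\mathcal{H}(t)$; together with $\frac{d}{dt}\mathcal{A}(t)\le C$ on $[T_1,+\infty)$ and the standard lemma that a nonnegative function with integrable tail and one-sided bounded derivative satisfies $g(t)^2\le C\int_{t-1}^{+\infty}g$, this gives $\mathcal{A}(t)^2\le C\mathcal{H}(t-1)$, hence $\|\nabla u(t)\|+\|\frac{\delta E}{\delta\phi}(t)\|\le C\mathcal{A}(t)^{1/2}\le C(1+t)^{-1/(4(1-2\theta'))}\le C(1+t)^{-\theta'/(1-2\theta')}$, the last step using $\theta'\le\frac14$. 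Subtracting $\frac{\delta E}{\delta\phi}(\phi_\infty)=0$ from $\frac{\delta E}{\delta\phi}(t)=k\varepsilon\Delta^2\phi(t)+H(\phi(t))$ gives $k\varepsilon\Delta^2\big(\phi(t)-\phi_\infty\big)=\frac{\delta E}{\delta\phi}(t)-\big(H(\phi(t))-H(\phi_\infty)\big)$ with $\|H(\phi(t))-H(\phi_\infty)\|\le C\|\phi(t)-\phi_\infty\|_{H^2}$ (a routine locally Lipschitz estimate from the uniform $H^3$ bound and the $3D$ Sobolev embeddings); so the biharmonic elliptic estimate together with the interpolation $\|\phi(t)-\phi_\infty\|_{H^2}\le\epsilon\|\phi(t)-\phi_\infty\|_{H^4}+C_\epsilon\|\phi(t)-\phi_\infty\|$ and absorption yield $\|\phi(t)-\phi_\infty\|_{H^4}\le C\big(\|\frac{\delta E}{\delta\phi}(t)\|+\|\phi(t)-\phi_\infty\|\big)\le C(1+t)^{-\theta'/(1-2\theta')}$. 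Combined with $\|u(t)\|_{\mathbf{H}^1}\le C\|\nabla u(t)\|$ this is \eqref{rate}, and \eqref{cgce} follows a fortiori. The main obstacle is precisely this last bootstrap: one has to check that \eqref{higha} transports the algebraic decay to the top-order quantities without unacceptable loss --- which is where shrinking $\theta'$ to at most $\frac14$ is used to absorb the square-root loss in the decay lemma --- and that $H(\phi)$ is genuinely subordinate so that the biharmonic elliptic estimate closes.
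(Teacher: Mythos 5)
Your proposal is correct, and it actually does more than the paper, which proves \eqref{cgce} but explicitly omits the proof of the rate \eqref{rate} as ``lengthy but standard'' (referring to \cite{SW11,WXL}). For \eqref{cgce} you follow essentially the paper's route: $\mathcal{A}\in L^{1}(0,+\infty)$ together with a one-sided bound on $\frac{d}{dt}\mathcal{A}$ from \eqref{higha} gives $\mathcal{A}(t)\to 0$, the $L^{1}$-integrability of $\|\phi_t\|$ from the proof of Theorem \ref{theorem on near equilibrium case} identifies $\phi_\infty$, and Lemma \ref{ls} at $\psi=\phi^{*}$ forces $E(\phi_\infty)=E(\phi^{*})$; your additional step of upgrading to $H^{4}$-convergence through $\frac{\delta E}{\delta\phi}=k\varepsilon\Delta^{2}\phi+H(\phi)$ and the biharmonic analogue of \eqref{dh2} is a welcome piece of bookkeeping the paper glosses over. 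Your rate argument (the $\mathcal{H}$-differential inequality $-\frac{d}{dt}\mathcal{H}\geq c\,\mathcal{H}^{2(1-\theta')}$, the \eqref{intb}-type inequality at $\phi_\infty$, and the elliptic bootstrap to $H^{4}$) is the standard \L ojasiewicz--Simon scheme and is sound. The only place where you lose something is the passage from $\int_t^{+\infty}\mathcal{A}\leq C\mathcal{H}(t)$ to pointwise decay of $\mathcal{A}$: using only $\frac{d}{dt}\mathcal{A}\leq C$ costs a square root, which is why you must shrink the exponent to $\theta'\leq\frac14$. This is defensible (Lemma \ref{ls} only asserts the existence of some exponent, and any smaller one works on a smaller neighborhood where $|E(\phi)-E(\psi)|\leq 1$), but it is unnecessary: since $\mathcal{A}$ is uniformly bounded, \eqref{higha} gives $\frac{d}{dt}\mathcal{A}\leq C\mathcal{A}$, and the uniform Gronwall lemma then yields $\mathcal{A}(t)\leq e^{C}\int_{t-1}^{t}\mathcal{A}(s)\,ds\leq C\mathcal{H}(t-1)$ with no loss, so that $\|\nabla u\|+\big\|\frac{\delta E}{\delta\phi}\big\|\leq C(1+t)^{-\frac{1}{2(1-2\theta')}}\leq C(1+t)^{-\frac{\theta'}{1-2\theta'}}$ holds with the original exponent $\theta'$ of Lemma \ref{ls} at $\phi_\infty$, which is what the statement intends. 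With that adjustment your argument is a complete and faithful proof of the corollary.
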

 \begin{proof}
 We infer from the higher-order energy inequality \eqref{higha} and uniform estimates \eqref{unihigh} that $\frac{d}{dt}\mathcal{A}(t)$ is bounded for $t>0$. On the other hand, the basic energy law implies that $\mathcal{A}(t)\in L^1(0,+\infty)$, then we have $\lim_{t\to +\infty}\mathcal{A}(t)=0$. Thus, we obtain the decay property of the velocity field $u$ in $\mathbf{V}$ and
 \be
 \lim_{t\to +\infty}\Big\|\dfrac{\delta E}{\delta \phi}(t)\Big\|=0.\label{decaypp}
 \ee
 Recalling the proof of Theorem \ref{theorem on near equilibrium case}, we have shown that
$\|\phi_t(t)\|\in L^1(0, +\infty)$. As a consequence, $\phi(t)$ will
converge in $L^2$ to a function $\phi_\infty\in H^4_p(Q)$ that
satisfies \eqref{se} due to \eqref{decaypp}. It follows from
\eqref{close1} that for sufficiently large $t$, we have
 \bea
  \|\phi_\infty-\phi^*\|_{H^2}&\leq&  \|\phi_\infty-\phi(t)\|_{H^2}+\|\phi(t)-\phi^*\|_{H^2} \non\\
  &\leq& C\|\phi_\infty-\phi(t)\|^\frac12_{H^4}\|\phi_\infty-\phi(t)\|^\frac12+\|\phi(t)-\phi^*\|_{H^2}\non\\
  &\leq& \min\{\delta, \beta\}.\non
 \eea
 Thus, applying Lemma \ref{ls} with $\phi=\phi_\infty$ and $\psi=\phi^*$, we have
 $$|E(\phi_\infty)-E(\phi^*)|^{1-\theta}\leq
 \Big\|\dfrac{\delta E}{\delta \phi}(\phi_\infty)\Big\|=0.$$
 The limit function $\phi_\infty$ is also a local minimizer of the energy $E$ and it will coincide with $\phi^*$ if the latter is isolated.
Finally, the proof for convergence rate \eqref{rate} is based on
Lemma \ref{ls} and higher-order differential inequalities as for the
liquid crystal systems \cite{SW11,WXL}. Since the proof is lengthy
but standard, we omit the details here.
 \end{proof}

\begin{remark}
We can also prove the long-time behavior for global weak solutions
with arbitrary initial data in contrast with smallness assumption
like \eqref{small}. Indeed, Corollary \ref{evereg} implies that any
global weak solution  $(u, \phi)$ to the problem
\eqref{NS}--\eqref{IC} will become a bounded strong one after a
sufficiently large time (eventual regularity). Then
 we can just make a shift of time and consider the long-time behavior of bounded strong solutions.
Applying Lemma \ref{ls}, we can use the Lojasiewicz--Simon technique
(cf., e.g., \cite{Chill}, see also \cite{Ab, GG09, GG10, LD, S83,
WXL} for applications to related models) to show that each weak
solution does converge to a single pair $(0,\phi_\infty)$ with
$\phi_\infty$ satisfying the stationary problem \eqref{se}. Besides,
one can obtain an estimate on convergence rate as \eqref{rate}.
\end{remark}

\section{Appendices}\setcounter{equation}{0}

\subsection{A formal physical derivation via energy variational approaches}
The energy variational approaches (in short, \textit{EnVarA}) provide unified
variational frameworks in studying complex fluids with
micro-structures (cf. e.g., \cite{ABML, HKL, YFLS}). From the energetic point of view, the system
\eqref{NS}--\eqref{phasefield} exhibits the competition between the
macroscopic kinetic energy and the microscopic membrane elastic
energy. The interaction or
coupling between different scales plays a crucial role in understanding complex fluids.
Based on the basic energy law \eqref{basic energy law}, we
shall perform a formal physical derivation of the induced elastic stress
through \textit{EnVarA}, which provides a further understanding of vesicle-fluid interactions.

The energetic variational treatment of complex fluids starts with
the energy dissipative law for the whole coupled system :
 \be
\dfrac{dE^{tot}}{dt}=-\mathcal{D},
 \non
 \ee
 where $E^{tot}=E^{kinetic}+E^{int}$ is the total
energy consisting of the kinetic energy and free energy and $\mathcal{D}$ is the dissipation function which is equal to  the
entropy production of the system in isothermal situations. The
\emph{EnVarA} combines the least action principle (for intrinsic and
short time dynamics) and the maximum dissipation principle (for long
time dynamics) into a force balance law that expands the
conservation law of momentum to include dissipations (cf. \cite{La96,
CH53, O53, O53-2}). The former gives us the Hamiltonian (reversible)
part of the system related to conservative forces, while the latter
provides the dissipative (irreversible) part related to dissipative
forces. In this way, we can distinguish the conservative and
dissipative parts among the induced stress terms.

The basic variable in continuum mechanics is the flow map $x(X, t)$,
(particle trajectory for any fixed $X$) . Here, $X$ is the original
labeling (the Lagrangian coordinate) of the particle, which is also
referred to as the material coordinate, while $x$ is the current
(Eulerian) coordinate and is also called the reference coordinate.
For a given velocity field $u(x, t)$, the flow map is defined by the
ordinary differential equations:
   \be
   x_t=u(x(X, t), t), \;\; x(X, 0)=X.\non
   \ee
The deformation tensor $\mathsf{F}$ associated with the flow field
is given by $\mathsf{F}_{ij}=\frac{\partial x_i}{\partial X_j}$. Note that we now have 
${\rm det}(\mathsf{F})=1$, which is equivalent to the incompressibility
condition $\nabla \cdot u=0$ (cf. e.g., \cite{LLZ05}). 
If there is no internal microscopic damping, the label function
$\phi$ satisfies the pure transport equation 
$$
\partial_t \phi+u\cdot\nabla\phi=0,
$$
 since a particle initially lying at $X$ will retain the initial label
$\phi_0(X)$ as time evolves. It is noted that the kinematic
assumption
$\dot{\phi}=\partial_t\phi +u\cdot\nabla\phi$ stands
for the influence of macroscopic dynamics on the microscopic scale.

The Legendre transform yields the action functional $\mathcal{A}$ of
the particle trajectories in terms of the flow map $x(X, t)$:
 \bea
\mathcal{A}(x)&=&\int_0^T\big(E^{kinetic}-E^{int} \big)dt
\non\\&=&\int_0^T\int_{\Omega_0}\Big[\frac12|x_t(X,t)|^2-E(\phi(x(X,t),t))\Big]{\rm det} (\mathsf{F})dXdt,
\eea 
 with $\Omega_0$ being the original domain. We take a one-parameter family of volume preserving
flow maps
\[ x^s(X, t) \ \ \mbox{with} \ \ x^0(X,t)=x(X,t), \ \ \frac{d}{ds}x^{s}(X,t)\Big|_{s=0}=v(X,t).  \]
The Eulerian velocity $\tilde{v}$ associated with $v$ is defined by $\tilde{v}(x(X,t),t)=v(X,t)$. We assume the volume-preserving condition, or
equivalently, ${\rm det}(\mathsf{F}^s)={\rm det}(\nabla_X x^s) =1$. In addition, given $\phi_0
: \Omega_0 \rightarrow \mathbb{R}$, we define the function
$\phi^s : \Omega^s \times [0, T]\rightarrow \mathbb{R}$ which
takes the constant value $\phi_0(X)$ along the particle trajectory
starting from $X$ under the motion $x^s$ such that
 \be
\phi^s(x^s(X, t), t)=\frac{\phi_0(X)}{{\rm det}(\mathsf{F}^s)}=\phi_0(X), \non
 \ee
 The functions $\phi$, $\phi^s$ are well defined because $x(\cdot,t)$, $x^s(\cdot,t)$ are one-to-one and onto for each $t$. Moreover, by definition, we have $\phi^0=\phi$. 
 Then by the chain rule, we get
 $$ \partial_{s}{\phi^s}(x^s)+\left(\frac{d}{ds}{x}^{s}\right)\cdot \nabla_{x^s}\phi^s(x^s)=\frac{d}{ds}\phi_0(X)=0.$$
 Taking $s=0$, we see that 
 \be
\partial_{s}{\phi^s}(x^s) \Big|_{s=0}+\tilde{v}\cdot\nabla_x\phi=0. \label{transport
equ for perturbation}
 \ee
Suppose that a motion $x$ is a critical point of the action $\mathcal{A}$, the least action principle yields that 
 \[
\delta_x\mathcal{A}=\frac{d}{ds}\mathcal{A}(x^s)\Big|_{s=0}=0,
\] 
for every smooth, one-parameter family of motions $x^s$ with $x^s(\cdot, 0)= x(\cdot,0)$
and $x^s(\cdot, T)=x(\cdot,T)$ (then we also have $v(\cdot, 0)=v(\cdot, T)=0$). After pushing forward to the Eulerian coordinates, we can see that 
  \bea
0&=&\int_0^T \left(\int_{\Omega_0}x_t\cdot
v_t dX-\int_\Omega \frac{\delta{E}}{\delta\phi}\partial_s\big(\phi^s(x^s)\big)\Big|_{s=0} dx\right) dt  \non\\
&=&\int_0^T \left(\int_{\Omega_0} x_t\cdot
v_t dX+\int_{\Omega} \frac{\delta{E}}{\delta\phi}\nabla_x\phi\cdot \tilde{v}\,dx\right)dt \non\\
&=&\int_0^T\left(\int_{\Omega_0} -x_{tt}\cdot
v dX+\int_\Omega \frac{\delta{E}}{\delta\phi}\nabla_x\phi\cdot \tilde{v}\,dx\right)dt \non\\
&=&-\int_0^T\int_{\Omega}\Big(\partial_t u+u\cdot\nabla{u}
-\frac{\delta{E}}{\delta\phi}\nabla_x\phi\Big)\cdot\tilde{v}\,dxdt. \non
\eea Since $\tilde{v}$ is an arbitrary divergence free vector field, we
formally derive the weak form of the Hamiltonian/conservative part
of the momentum equation
 \be
u_t+u\cdot\nabla{u}+\nabla{P}=\frac{\delta{E}}{\delta\phi}\nabla\phi,
\label{equ from LAP}
 \ee with $P$ serving as a Lagrangian multiplier
for the incompressibility condition.

On the other hand, the maximum dissipation principle concerns the dissipations of the system,
which represent the macroscopic long time dynamics. The diffusive interface method is imposed by an additional dissipation term
(relaxation) in the transport equation 
 \be
\phi_t+u\cdot\nabla\phi=-\gamma\frac{\delta{E}}{\delta\phi},
\label{full transport equ} 
 \ee 
 which indicates a gradient flow dynamics that is
another formulation of the near equilibrium, linear response theory
(cf. \cite{O53, O53-2}). 
We also want to include the dissipation in flow field caused by the flow viscosity $\mu$ such that the total dissipation is given by 
\[ \mathcal{D}=\mu\int_{\Omega}|\nabla{u}|^2dx+\gamma\int_{\Omega}\left|\frac{\delta{E}}{\delta\phi}\right|^2dx. \]
By equation \eqref{full transport equ}, we
can reformulate the dissipation functional as
\[ \mathcal{D}=\mu\int_{\Omega}|\nabla{u}|^2dx+\frac{1}{\gamma}\int_{\Omega}|\phi_t+u\cdot\nabla\phi|^2dx. \]
Then we perform a variation on the dissipation
functional with respect to the velocity $u$ in Eulerian coordinates
to get a weak form of the dissipative force balance law. Let
$u^{s}=u+s{v}$, where $v$ is an arbitrary smooth vector function
with $\nabla\cdot{v}=0$. 
 Then we have
  \bea 0&=&\delta_u\Big(\frac12\mathcal{D}
\Big)=\frac12\frac{d\mathcal{D}(u^{s})}{ds}\Big|_{s=0}\non\\
&=&\mu\int_{\Omega}(\nabla{u} :
\nabla{v})dx+\frac{1}{\gamma}\int_{\Omega}
(v\cdot\nabla\phi)(\phi_t+u\cdot\nabla\phi)dx \non\\
&=&\mu\int_{\Omega}(\nabla{u} :
\nabla{v})dx-\int_{\Omega}(v\cdot\nabla\phi)\frac{\delta{E}}{\delta\phi}dx
\non\\
&=&-\int_{\Omega}\Big(
\mu\Delta{u}+\frac{\delta{E}}{\delta\phi}\nabla\phi\Big)\cdot v dx,
 \non
  \eea 
  which yields the dissipative force balance law
 \be
-\nabla{P}+\mu\Delta{u}+\frac{\delta{E}}{\delta\phi}\nabla\phi=0,
\label{equ from MDP}
 \ee where $P$ serving as a Lagrangian multiplier
for the incompressibility condition $\nabla\cdot v=0$.

Our system \eqref{NS}--\eqref{phasefield} can be viewed as the hybrid of these two
conservative/dissipative systems \eqref{equ from LAP} and \eqref{equ from MDP} and taking into
account the total transport equation \eqref{full transport equ} for the phase function $\phi$. The above energy variational approach enables us to derive
the thermodynamic-consistent models involving different physics at
different scales.

\br We point out that the induced elastic force
$\frac{\delta{E}}{\delta\phi}\nabla\phi$ can be derived either from
least action principle or the maximum dissipation principle, which indicates that it can be recognized either as
conservative or dissipative. In contrast, $\mu\Delta{u}$ can only be
derived from the maximum dissipation principle, which is henceforth a dissipative term.

\er

\subsection{\L ojasiewicz--Simon type inequality }

We prove that a \L ojasiewicz--Simon type inequality holds in a
proper neighborhood of every critical point of the functional
$E(\phi)$.

First, We recall the definition of analyticity on Banach spaces
\cite[Definition 8.8]{Z}: Suppose~$X,\ Y$ are two Banach spaces. The
operator $T:\ D(T)\subseteq X\rightarrow Y$ is analytic if and only
if for any $x_0\in X$, there exists a small neighborhood of $x_0$
such that
 \be T(x_0+h)-T(x_0)=\sum_{n\geq
1}T_n(x_0)(h,...,h),\quad \forall\, h\in X, \ \ \|h\|_{X}<r<<1.\non
  \ee
Here $T_n(x_0)$ is a continuous symmetrical $n$-linear operator on
$X^n\rightarrow Y$ and satisfies
 \be \sum_{n\geq
1}\|T_n(x_0)\|_{L(X^n,Y)}\|h\|^n_X<+\infty.\non
 \ee
 \begin{proposition} \label{anal}
 Suppose $n=3$, we have
 \begin{itemize}
 \item[(1)] $E(\phi): H^4_p(Q)\to \mathbb{R}$ is analytic;
 \item[(2)] $\frac{\delta E}{\delta\phi}: H^4_p(Q)\to L^2_p(Q)$ is analytic;
 \item[(3)] for any $\psi\in H^4_p(Q)$ , $E''(\psi)$ is a Fredholm operator of index zero from $H^4_p(Q)$ to $L^2_p(Q)$.
 \end{itemize}
 \end{proposition}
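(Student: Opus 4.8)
The plan is to reduce both the analyticity assertions and the Fredholm assertion to a few elementary facts about $H^4_p(Q)$ in three dimensions: since $4>\frac{n}{2}=\frac32$, the space $H^4_p(Q)$ is a Banach algebra and embeds continuously into $C^2$, hence into $W^{2,\infty}_p(Q)$; moreover multiplication by a $W^{2,\infty}_p$ function maps $H^2_p(Q)$ boundedly into $H^2_p(Q)$ (in particular $H^4_p\cdot H^2_p\subset H^2_p$), and $H^2_p(Q)\hookrightarrow L^2_p(Q)$ compactly by Rellich.

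For (1) and (2) I would invoke the general principle that a map between Banach spaces built as a finite sum of finite compositions of (a) bounded linear operators, (b) finite-degree homogeneous polynomial maps, and (c) the continuous quadratic functional $v\mapsto\|v\|^2$, is analytic in the sense of the definition above: around any point each such building block has a terminating Taylor expansion, and sums, products and compositions of analytic maps between Banach spaces are analytic. Concretely, $\phi\mapsto f(\phi)=-\varepsilon\Delta\phi+\frac1\varepsilon(\phi^3-\phi)$ is analytic from $H^4_p(Q)$ to $H^2_p(Q)$ (the Laplacian is bounded linear $H^4_p\to H^2_p$, and $\phi\mapsto\phi^3-\phi$ is a polynomial map on the Banach algebra $H^4_p$); composing with $H^2_p\hookrightarrow L^2_p$ and the quadratic functional shows $E_\varepsilon$ is analytic, while $A$ is bounded linear and $B$ is the sum of a continuous quadratic form in $\nabla\phi$ and the integral of the polynomial $(\phi^2-1)^2\in H^4_p\hookrightarrow L^1_p$. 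Hence $E(\phi)=E_\varepsilon(\phi)+\frac12 M_1(A(\phi)-\alpha)^2+\frac12 M_2(B(\phi)-\beta)^2$ is analytic on $H^4_p(Q)$, which is (1). For (2), I would use the decomposition $\frac{\delta E}{\delta\phi}=k\varepsilon\Delta^2\phi+H(\phi)$ from Lemma~\ref{eqes}: the map $\phi\mapsto k\varepsilon\Delta^2\phi$ is bounded linear $H^4_p\to L^2_p$, and each term of $H(\phi)$ in \eqref{v1E} is either a polynomial map into $H^2_p$ (using $H^4_p\cdot H^2_p\subset H^2_p$), or the constant function $M_1(A(\phi)-\alpha)$ (a bounded linear image of an analytic scalar), or the product of the analytic scalar $B(\phi)-\beta$ with the analytic $H^2_p$-valued map $f(\phi)$; composing with $H^2_p\hookrightarrow L^2_p$ gives analyticity into $L^2_p$.

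For (3) I would identify $E''(\psi)\in\mathcal{L}(H^4_p,L^2_p)$ with the Gâteaux derivative at $\psi$ of $\phi\mapsto\frac{\delta E}{\delta\phi}$ and split it as $E''(\psi)=k\varepsilon\Delta^2+R(\psi)$. The biharmonic operator $\Delta^2:H^4_p(Q)\to L^2_p(Q)$ is Fredholm of index zero: computing with Fourier series on the torus, its kernel is exactly the constants and its range is the closed subspace $\dot L^2_p(Q)$ of mean-zero functions, so kernel and cokernel are both one-dimensional. The remainder $R(\psi)$ collects the linearizations of all lower-order and nonlocal terms of $\frac{\delta E}{\delta\phi}$; a term-by-term differentiation of \eqref{v1E} shows that each of them applies at most $\Delta$ (two derivatives) to the increment $h$, with coefficients that are fixed functions of $\psi,\nabla\psi,\Delta\psi$ lying in $W^{2,\infty}_p$ by $H^4_p\hookrightarrow C^2$, together with finite-rank contributions coming from $A'(\psi)h=\int_Q h\,dx$ and from $B'(\psi)h$. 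Consequently $R(\psi)$ maps $H^4_p(Q)$ boundedly into $H^2_p(Q)$, hence $R(\psi):H^4_p(Q)\to L^2_p(Q)$ is compact. Since the Fredholm property and the Fredholm index are stable under compact perturbations, $E''(\psi)=k\varepsilon\Delta^2+R(\psi)$ is Fredholm of index zero.

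The verifications needed for (1)–(2) are routine bookkeeping. The one delicate point is step (3), where one must check that after differentiation no term carries more than two derivatives on $h$ and that the $\psi$-dependent coefficients and the two nonlocal pieces are treated so that $R(\psi)$ really maps $H^4_p$ boundedly into $H^2_p$; this is where I expect the main, though still essentially elementary, effort to lie.
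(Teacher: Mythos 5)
Your proposal is correct and follows essentially the same route as the paper: analyticity of $E$ and $\frac{\delta E}{\delta\phi}$ via the polynomial/Banach-algebra structure of $H^4_p(Q)$ and $H^2_p(Q)$, and for (3) the splitting $E''(\psi)=k\varepsilon\Delta^2+R(\psi)$ with $R(\psi)$ mapping $H^4_p(Q)$ boundedly into $H^2_p(Q)$, hence compactly into $L^2_p(Q)$, so the index-zero Fredholm property follows by compact perturbation. The only minor difference is that you check $k\varepsilon\Delta^2:H^4_p(Q)\to L^2_p(Q)$ is Fredholm of index zero directly by Fourier series (kernel the constants, range the mean-zero functions), whereas the paper shows $k\varepsilon\Delta^2+\eta I$ is an isomorphism via Lax--Milgram and then removes $\eta I$ as a compact perturbation; both are valid, and your slightly loose remark that the coefficients lie in $W^{2,\infty}_p$ (the coefficient $\Delta\psi$ is only in $H^2_p\hookrightarrow L^\infty$) is harmless since your stated product rule $H^4_p\cdot H^2_p\subset H^2_p$ covers those terms.
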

 \begin{proof}
 (1) It follows from the definition of $E$ that it is the sum of integrations of polynomials in terms of $\Delta \phi$, $\nabla \phi$ and $\phi$. Since $\phi\in H^4_p(Q)$, then those functions belongs to $H^2_p(Q)$ which is a Banach algebra for the pointwise multiplication when $n=3$. Thus, $E(\phi): H^4_p(Q)\to \mathbb{R}$ is analytic.

 (2) Recalling \eqref{v1E}, we see that $\frac{\delta E}{\delta\phi}=k\varepsilon\Delta^2 \phi+H(\phi)$, where $H(\phi)$ is the sum of polynomials in terms of $\Delta \phi$, $\nabla \phi$ and $\phi$ that belong to $H^2_p(Q)$ when $\phi\in H^4_p(Q)$. Thus, our conclusion follows.

 (3)  For any $\psi, w_1, w_2\in H^4_p(Q)$, we calculate that
 \bea && E''(\psi)(w_1, w_2)\non\\
 &=&\frac{d}{ds} (E'(\psi+sw_1), w_2)|_{s=0}\non\\
 &=& \frac{d}{ds}\int_Q \left( kg(\psi+sw_1)+ M_1(A(\psi+sw_1)-\alpha)\right) w_2 dx\left. \right|_{s=0}\non\\
 && \ \ +
   \frac{d}{ds}\int_Q M_2(( B(\psi+sw_1)-\beta)f(\psi+sw_1)) w_2 dx \Big|_{s=0}\non\\
 &=&\int_Q k\varepsilon (\Delta^2 w_1)w_2 - \frac{6k}{\varepsilon} |\nabla \psi|^2 w_1 w_2- \frac{12k}{\varepsilon} (\psi\nabla \psi \cdot\nabla w_1) w_2 - \frac{12k}{\varepsilon}(\psi\Delta\psi)w_1w_2 dx
 \non\\
 && \ \ -\int_Q \frac{2k}{\varepsilon}(3\psi^2-1)(\Delta w_1) w_2 dx +\frac{k}{\varepsilon^2}\int_Q (15\psi^4-12\psi^2-1)w_1w_2 dx\non\\
 &&\ \  + M_1\int_Q w_1 dx \int_Q w_2 dx + M_2 \int_Q f(\psi) w_1 dx\int_Q f(\psi) w_2 dx
 \non\\
 &&\ \ 
 + M_2(B(\psi)-\beta)\int_Q\left[-\varepsilon \Delta w_1 + \frac{1}{\varepsilon}(3\psi^2-1)w_1\right] w_2 dx\non\\
 &:=& \int_Q k\varepsilon (\Delta^2 w_1)w_2 dx+ R(\psi)(w_1, w_2).\non
 \eea
 We observe that the leading order term of the linear operator $E''(\psi)$ is $k\varepsilon\Delta^2$. This forth-order operator (subject to periodic boundary conditions) from $H^4_p(Q)$ to $L^2_p(Q)$ can be  associated with the symmetric bilinear form $A: H^2_p(Q)\times H^2_p(Q)\to \mathbb{R}$ given by
 $$ A(f, g)=k\varepsilon\int_Q \Delta f\Delta g dx, \quad \forall\, f, g \in H^2_p(Q)$$
 such that by integration by parts $ \int_Q k\varepsilon\Delta^2 f gdx=A(f, g)$ for any $f, g\in H^4_p(Q)$. Obviously, $A(\cdot, \cdot)$ is bounded on $H^2_p(Q)$.  By the elliptic estimate \eqref{dh2}, for any $\lambda>0$, $\phi\in H^2_p(Q)$, there is some $\eta'$ such that $A(\phi, \phi)+\eta\|\phi\|^2\geq \eta'\|\phi\|_{H^2}$. Thus, it follows from the Lax--Milgram theorem that the self-adjoint operator $k\varepsilon\Delta + \eta I: H^4_p(Q)\to L^2_p(Q)$ is an isomorphism. Then we see that $k\varepsilon \Delta^2: H^4_p(Q)\to L^2_p(Q) $ is a Fredholm operator of index zero (cf. e.g.,  \cite[Section 8]{Z}). The remaining term $R(\psi)$ consists of $\psi$, $\nabla \psi$, $\Delta \psi$ and their integrals as well as differential operators $\Delta$, $\nabla$. Therefore, $R(\psi)$ is a compact operator from $H^4_p(Q)$ to $L^2_p(Q)$ for $\psi\in H^4_p(Q)$. they rms of $\Delta \phi$, $\nabla \phi$ and $\phi$ that belong to $H^2_p(Q)$ when $\phi\in H^4_p(Q)$.

 As a consequence, for any $\psi\in H^4_p(Q)$, $E''(\psi)$ is a compact perturbation of a Fredholm operator of index zero from  $H^4_p(Q)$ to $L^2_p(Q)$, then itself is also a Fredholm operator of index zero from  $H^4_p(Q)$ to $L^2_p(Q)$ (cf. e.g, \cite[Section 8]{Z}). The proof is complete.
  \end{proof}

Using Proposition \ref{anal}, we can infer from the abstract result
\cite[Corollary 3.11]{Chill}  that the following result holds

\bt
   \label{ls1}
   Suppose $n=3$. Let $\psi$ be a critical point of energy $E$.
   Then, there exist constants $\beta_1>0$, $\theta\in(0, \frac12)$
   depending on $\psi$ such that for any $\phi\in H^4_p(Q)$
   with $\|\phi-\psi\|_{H^4}<\beta_1$, there holds
   \be
   \Big\|\dfrac{\delta
E}{\delta\phi}\Big\| \geq  |E(\phi)-E(\psi)|^{1-\theta}.
   \label{LoSi1}
   \ee
   \et

 \textbf{Proof of Lemma \ref{ls}}.    Based on Theorem \ref{ls1}, we now relax the smallness condition and show that \eqref{LoSi} holds if one only requires that $\phi$ falls into a certain $H^2$-neighbourhood of $\psi$.
  For any $\phi\in H^4_p(Q)$, using the regularity theory for elliptic
   problem, we can see that
   \be \|\phi-\psi\|_{H^4}\leq M(\|\Delta^2 (\phi-
   \psi)\|+ \|\phi-\psi\|),
   \label{55}
   \ee
where $M$ is a constant independent of $\phi$ and $\psi$. If
$\|\phi-\psi\|_{H^2}\leq 1$, we take this assumption just to ensure
that that the fact $ \|\phi\|_{H^2}\leq \|\psi\|_{H^2}+1$ depends
only on $\psi$. Similar to \eqref{F1}--\eqref{F3}, we see that
 \be
  |E(\phi)-E(\psi)|^{1-\theta} \leq
 C_1\|\phi-\psi\|^{1-\theta}_{H^2}.
 \ee
 By H\"older inequality and Sobolev embedding theorem $H^2_p(Q)\hookrightarrow L^\infty(Q)$, we get
 \bea
 |B(\phi)-B(\psi)|&\leq& C\|\nabla(\phi+\psi)\|\|\nabla (\phi-\psi)\|+C(\|\phi\|_{L^\infty}^3+\|\psi\|^3_{L^\infty}+ 1)\|\phi-\psi\|_{L^1}\non\\
 &\leq& C\|\phi-\psi\|  _{H^1},\non
 \eea
 \bea
 \|f(\phi-f(\psi)\|&\leq & C\|\Delta\phi-\Delta\psi\|+ C(\|\phi\|_{L^\infty}^2+\|\psi\|^2_{L^\infty}+ 1)\|\phi-\psi\|\non\\
 &\leq & 
 C\|\phi-\psi\|_{H^2},\non
 \eea
 where $C$ only depend on $\|\psi\|_{H^2}$. Recalling the expression of $H(\phi)$ given in \eqref{v1E}, we obtain
 \bea
  && \|H(\phi)-H(\psi)\| \non\\
  &\leq&
  C\|\phi-\psi\|_{L^\infty}\|\nabla \phi\|^2_{L^4}+C\|\psi\|_{L^\infty}\|\nabla (\phi+\psi)\|_{L^4}\|\nabla (\phi-\psi)\|_{L^4}\non\\
  && +C\|\phi\|_{L^\infty}^2\|\Delta(\phi-\psi)\|+\|\Delta \psi\|\|\phi+\psi\|_{L^\infty}\|\phi-\psi\|_{L^\infty}\non\\
  && +C(\|\phi\|_{L^\infty}, \|\psi\|_{L^\infty})\|\phi-\psi\|+M_1\|\phi-\psi\|_{L^1}+M_2|B(\phi)-B(\psi)|\|f(\phi)\|\non\\
  && +M_2(|B(\psi)|+\beta)\|f(\phi)-f(\psi)\|
  \non\\
  &\leq&  C_2\|\phi-\psi\|_{H^2}.\non
 \eea
 Since the above constants $C_1, C_2$ only depend on $\|\psi\|_{H^2}$, there
 exists a (sufficiently small) constant $\beta$ independent of $\phi$
 which satisfies
 $$ 0<\beta< \min\left\{1, \beta_1, \frac{\beta_1}{2M}, \Big(\frac{\beta_1 k\varepsilon}{4M(C_1+C_2)}\Big)^\frac{1}{1-\theta}\right\} $$
 such that if
 $\|\phi-\psi\|_{H^2}<\beta$, then
 \be  \|H(\phi)-H(\psi)\|+ |E(\phi)-E(\psi)|^{1-\theta}< \frac{\beta_1k\varepsilon}{4M}.
  \label{551}
  \ee
For any $\phi\in H^4_p(Q)$ satisfying  $\|\phi-\psi\|_{H^2}<\beta$,
there are only two possibilities: (i) if $\phi$ also satisfies
$\|\phi-\psi\|_{H^4}<\beta_1$, then \eqref{LoSi1} holds; (ii)
otherwise, if $\|\phi-\psi\|_{H^4}\geq \beta_1$,  note that $\psi$
satisfies \eqref{se}, hence we deduce from (\ref{55}) and
(\ref{551}) that
\begin{eqnarray}
 \Big\| \dfrac{\delta
E}{\delta\phi}\Big\|
& = &\|k\varepsilon \Delta^2 (\phi-\psi)+H(\phi)-H(\psi) \|\non\\
&\geq& k\varepsilon\| \Delta^2 (\phi-\psi)\|-\|H(\phi)-H(\psi)\|\nonumber\\
&\geq & \frac{3\beta_1 k\varepsilon}{4M}\|\phi-\psi\|_{H^4}- k\varepsilon\|\phi-\psi\|\nonumber\\
& > & \frac{\beta_1k\varepsilon}{4M}>
|E(\phi)-E(\psi)|^{1-\theta}.\non
\end{eqnarray}
The proof is complete.

\bigskip
\noindent \textbf{Acknowledgments:} The authors are grateful to the
referees for their helpful comments and suggestions. X. Xu would
like to thank Professor X.-Q. Wang for helpful discussions. Part of
the work was done when X. Xu was visiting School of Mathematical
Sciences at Fudan University, whose hospitality is acknowledged. X. Xu would also warmly thank the Center
for Nonlinear Analysis at Carnegie Mellon University (NSF Grants DMS-0405343 and
DMS-0635983), where part of this research was carried out. H.
Wu was partially supported by NSF of China 11001058, Specialized Research Fund for
the Doctoral Program of Higher Education and the
Fundamental Research Funds for the Central Universities. X. Xu was
partially supported by the NSF grant DMS-0806703.



\begin{thebibliography}{99}
\itemsep=0pt

\bibitem{Ab} H. Abels, On a diffuse interface model for two-phase
flows of viscous incompressible fluids with matched densities, Arch.
Rational Mech. Anal., \textbf{194} (2009), 463--506.

\bibitem{ABML}  J. Adler, J. Brannick, C. Liu, T. Manteuffel and L. Zikatanov,  First-order system least squares and the energetic variational approach for two-phase flow, J. Comput. Phys., \textbf{230}(17) (2011),  6647--6663.

\bibitem{Be} H. Beirao da Veiga, A new regularity class for the Navier--Stokes equation in $\mathbb{R}^n$, Chinese Ann. Math., \textbf{16B} (1995), 407--412.

\bibitem{Boal} D. Boal, \textit{Mechanics of the Cell}, Cambridge University Press, Cambridge, 2002.

\bibitem{B99} F. Boyer, Mathematical study of multi-phase flow under shear
through order parameter formulation, Asymptotic Anal., \textbf{20}
(1999), 175--212.

\bibitem{CV} C. Chan and A. Vasseur, Log improvement of the Prodi--Serrin criteria for Navier--Stokes equations, Methods Appl.
Anal., \textbf{14} (2007), 197--212.

\bibitem{Chill} R. Chill,
On the \L ojasiewicz--Simon gradient inequality, J. Funct. Anal.,
\textbf{201}(2) (2003),  572--601.


\bibitem{Ci00} P.G. Ciarlet, Mathematical elasticity, vol. III, in: Studies in
Mathematics and its Applications, \textbf{29}, North-Holland,
Amsterdam, 2000.

\bibitem{CL11} P. Colli and P. Laurencot, A phase-field approximation
of the Willmore flow with volume constraints, Interfaces Free Bound.,
\textbf{13}(3) (2011), 341--351.

\bibitem{CL12} P. Colli and P. Laurencot, A phase-field approximation
of the Willmore flow with volume and area constraints, (2012),
preprint, arXiv: 1204.4274v1.

\bibitem{CH53} R. Courant and D.  Hilbert,  Methods of mathematical
physics. Interscience, Vol. \textbf{1}, New York (1953)

\bibitem{DLL07} Q. Du, M. Li and C. Liu, Analysis of a phase field Navier--Stokes vesicle-fluid interaction model,
Discrete Contin. Dyn. Syst. B, \textbf{8}(3) (2007),  539--556.

\bibitem{DLRW05} Q. Du, C. Liu, R. Ryham and X. Wang, A phase field formulation of
the Willmore problem, Nonlinearity, \textbf{18} (2005), 1249--1267.

\bibitem{DLRW05a} Q. Du, C. Liu, R. Ryham and X. Wang, Phase field modeling of the spontaneous
curvature effect in cell membranes, Commun. Pure Appl. Anal.,
\textbf{4} (2005), 537--548.

\bibitem{DLRW09} Q. Du, C. Liu, R. Ryham and X. Wang, Energetic variational approaches in modeling vesicle and fluid
interactions, Physica D,  \textbf{238} (2009), 923--930.

\bibitem{DLW04} Q. Du, C. Liu and X. Wang, A phase field approach in the
numerical study of the elastic bending energy for vesicle membranes,
J. Computational Physics, \textbf{198} (2004), 450--468.

\bibitem{DLW05} Q. Du, C. Liu and X. Wang, Retrieving topological information
for phase field models, SIAM J. Appl. Math., \textbf{65} (2005),
1913--1932.

\bibitem{DLW06} Q. Du, C. Liu and X. Wang, Simulating the deformation of vesicle membranes
under elastic bending energy in three dimensions, J. Computational
Physics, \textbf{212} (2006), 757--777.

\bibitem{DW07} Q. Du and X. Wang, Convergence of numerical approximations to a phase field bending
elasticity model of membrane deformations, Inter. J. Numer. Anal and
Modeling, \textbf{4}(3\&4) (2007), 441--459.

\bibitem{FHL07} X. Feng, Y. He and C. Liu, Analysis of finite element approximations
of a phase-field model for two-phase fluids, Math. Comp.,
\textbf{76} (2007), 539--571.

\bibitem{GG09} C.G. Gal, M. Grasselli, Asymptotic behavior of a Cahn--Hilliard--Navier--Stokes system in 2D, Ann. Inst.
H. Poincar\'e Anal. Non Lin\'eaire,  \textbf{27} (2010), 401--436.

\bibitem{GG10} C.G. Gal, M. Grasselli, Long time behavior for a model of homogeneous incompressible two phase flows,
Discrete Contin. Dyn. Syst., \textbf{28} (2010), 1--39.

\bibitem{HE} W. Helfrich, Elastic properties of lipid bilayers: theory and possible experiments, Z. Naturforsch. C, \textbf{28} (1973), 693--703.

\bibitem{HKL}  Y. Hyon, D. Kwak, C. Liu, Energetic variational approach in complex fluids: maximum dissipation principle, Discrete Contin. Dyn. Syst., \textbf{26}(4) (2010), 1291--1304.

\bibitem{LA} O.A. Ladyzhenskaya, \emph{The Mathematical Theory of Viscous
Incompressible Flow}, Gordon and Breach, 1969.

\bibitem{La96}  L.-D. Landau  and E.-M. Lifshitz, \textit{Statistical Physics,
Course of Theoretical Physics}, Vol. \textbf{5}, Butterworths, London, 
1996.

\bibitem{LD} F.-H. Lin and Q. Du, Ginzburg--Landau vortices: dynamics, pinning, and hysteresis,
SIAM J. Math. Anal., \textbf{28} (1997), 1265--1293.

\bibitem{LL95} F.-H. Lin and C. Liu, Nonparabolic dissipative system
modeling the flow of liquid crystals, Comm. Pure Appl. Math.,
\textbf{48}(5) (1995), 501--537. 

\bibitem{LLZ05} F.-H. Lin, C. Liu and P. Zhang,  On hydrodynamics of viscoelastic fluids, 
Comm. Pure Appl. Math., \textbf{58}(11) (2005), 1437--1471.

\bibitem{LYN} Y. Liu, T. Takahashi and M. Tucsnak, Strong solution for a phase field Navier--Stokes vesicle fluid interaction model, J. Math. Fluid Mech.,   \textbf{14}(1) (2012),  177--195. 

\bibitem{MS05} S. Montgomery-Smith, Conditions implying regularity of the three dimensional Navier--Stokes equation, Appl. Math., \textbf{50} (2005), 451--464.

\bibitem{MO05} O. Mouritsen, \textit{Life--As a Matter of Fat: The Emerging Science of
Lipidomics}, Springer, Berlin, 2005.

\bibitem{O31}  Onsager, L.: Reciprocal relations in irreversible
processes I. Physical Review \textbf{37},  405--426 (1931)

\bibitem{O31-2}  Onsager, L.: Reciprocal relations in irreversible
processes II. Physical Review \textbf{38},  2265--2279 (1931)

\bibitem{O53}  Onsager, L., Machlup, S.: Fluctuations and irreversible processes.
Physical Review \textbf{91},  1505--1512 (1953)

\bibitem{O53-2}  Onsager, L., Machlup, S.: Fluctuations and irreversible
processes. II. Systems with kinetic energy. Physical Review
\textbf{91}, 1512--1515 (1953)

\bibitem{SW11} A. Segatti and H. Wu, Finite dimensional reduction and
convergence to equilibrium for incompressible Smectic-A liquid
crystal flows, SIAM J. Math. Anal., \textbf{43}(6) (2011),
2445--2481.

\bibitem{Sei} U. Seifert and R. Lipowsky, \textit{Morphology of Vesicles}, in: Handbook of Biological
Physics, Vol. \textbf{1}, 1995.

\bibitem{Se} J. Serrin, The initial value problem for the Navier--Stokes equations, In: Nonlinear Problems. Proc. Sympos., Madison,
WI, University of Wisconsin Press, Madison, WI (1963),  69--98.

\bibitem{S83} L. Simon, Asymptotics for a class of nonlinear
evolution equation with applications to geometric problems, Ann. of
Math., {\bf 118} (1983), 525--571.

\bibitem{Te} R. Temam, \emph{Navier--Stokes Equations and Nonlinear
Functional Analysis}, SIAM, 1983.

\bibitem{Te97} R. Temam, \textit{Infinite Dimensional Dynamical Systems in Mechanics and
Physics}, Springer, New York, 1997.

\bibitem{WD08} X. Wang and Q. Du, Modelling and simulations of multi-component liquid membranes and open membranes via diffuse interface approaches, J. Math. Biol., \textbf{56}(3) (2008), 347--371.

\bibitem{Wang08} X. Wang, Asymptotic analysis of phase field formulation of bending elasticity models, SIAM J. Math Anal.,  \textbf{39}(5) (2008), 1367--1401.

\bibitem{WXL} H. Wu, X. Xu and C. Liu, Asymptotic behavior for a
nematic liquid crystal model with different kinematic transport
properties, Calc. Var. Partial Differential Equations, \textbf{45}
(3\&4) (2012), 319--345.

\bibitem{YFLS} P. Yue, J.J. Feng, C. Liu and J. Shen, A diffuse-interface method for simulating two-phase flows of complex fluids, J. Fluid Mech., \textbf{515} (2004), 293--317.

\bibitem{ZL} Y. Zhou and Z. Lei, Logarithmically improved criteria for Navier--Stokes equations, preprint, (2008), arXiv:0805.2784.

\bibitem{ZG} Y. Zhou and S. Gala, Logarithmically improved regularity criteria for the Navier--Stokes equations in multiplier spaces,
J. Math. Anal. Appl., \textbf{356} (2009), 498--501.

\bibitem{Z} E. Zeidler, \textit{Nonlinear Functional Analysis and Its Applications}, Vol. \textbf{I}, Springer-Verlag, New York, 1988.


\end{thebibliography}
\end{document}